\newcommand{\letitre}{A theory of multiholomorphic maps}
\numberwithin{equation}{section}
\newcommand{\Hom}{\textrm{Hom}}
\renewcommand{\Im}{\mathrm{Im}}
\renewcommand{\ker}{\mathrm{ker}}
\renewcommand{\Re}{\mathrm{Re}}
\newcommand{\trace}{\mathrm{tr}}
\newcommand{\dVol}{\mathrm{dVol}}
\newcommand{\Vol}{\mathrm{Vol}}
\newcommand{\Gtwo}{\mathrm{G}_2}
\newcommand{\Speven}{\mathrm{Spin}_7}
\newcommand{\Sgn}{\mathrm{Sgn}}
\newcommand{\R}{\mathbb{R}}
\newcommand{\C}{\mathbb{C}}
\newcommand{\Oct}{\mathbb{O}}
\newcommand{\altemph}[1]{{\bf \emph{#1}}}
\newcommand{\End}{\mathrm{End}}
\newcommand{\citep}[2]{\cite[#2]{#1}}
\newtheorem{thm}{Theorem}[section]
\newtheorem{Defn}[thm]{Definition}
\newtheorem{Prop}[thm]{Proposition}
\newtheorem{Thm}[thm]{Theorem}
\newtheorem{Spec}[thm]{Speculation}
\newtheorem{Cor}[thm]{Corollary}
\newtheorem{Lem}[thm]{Lemma}
\newtheorem{Example}[thm]{Example}
\title{A theory of multiholomorphic maps}
\author{Aaron M. Smith}
\begin{document}
\maketitle
\abstract{This paper presents and explores a theory of \emph{multiholomorphic maps}.  This group of ideas generalizes the theory of pseudoholomorphic curves in a direction suggested by consideration of the kinds of compatible geometric structures that appear in the realm of special holonomy as well as some of the topological and analytic considerations that are essential to pseudoholomorphic invariants.  The first part presents the geometric framework of compatible $n$-triads, from which follows naturally the definition of a multiholomorphic mapping.  Some of the general analytic and differential-geometric properties of these maps are derived, including an energy identity which expresses a multiholomorphic map as a minimizer in its homotopy class of the appropriate $L^p$-energy.  Some theorems confining the critical loci of such maps are obtained as well as some Liouville-type theorems for maps with sufficient regularity in the presence of curvature hypotheses.  Finally, attention is focused onto a special case of the theory which pertains to the calibrated geometry of $\Gtwo$-manifolds.}  



\begin{section}{Introduction}
In recent decades the theory of pseudoholomorphic curves has been employed very fruitfully as a means of generating invariants of symplectic manifolds.  This approach, initiated by Gromov, begins by making a choice of a compatible trio of geometric structures on the target symplectic manifold.  Such a triple consists of a symplectic form $\omega$, a Riemannian metric $g$, and an almost complex structure $J$.  These elements are chosen to be compatible as a triad in the sense that any two determine the other uniquely.  The possibility of such compatibility is contained in the well-known ``2-out-of-3 property'' that relates $Sp(n,\R), SO(2n,\R)$, and $GL(n,\C)$.   Once such a choice is made, one can study solutions to the non-linear Cauchy--Riemann (CR) equation on maps between a Riemann surface and the target almost complex manifold.  Even thought the CR equation deals only with the complex structures, the geometric/topological interest of the theory is wound up in the way the symplectic structure ``tames'' the complex structure on the target.
  
This article begins by introducing a more general triad structure than what appears in the almost K\"{a}hler situation just described.  Such a triad includes three elements: a \emph{nondegenerate} $(n+1)$-form, an $n$-fold \emph{split product}, and a Riemannian metric along with some relevant compatibility stipulations.  One important consequence of these compatibility conditions is the existence of a \emph{vector cross product}.   

This setup naturally suggests a differential operator $\eth$, the \emph{multi-Cauchy--Riemann operator}, acting on maps between manifolds possessing compatible $n$-triads.  This operator measures the degree to which a map intertwines the vector cross products.  Solutions to $\eth u = 0$ are called \emph{multiholomorphic maps}, and coincide with pseudoholomorphic maps in the almost K\"ahler setting.  $\Gtwo$ and $\Speven$ manifolds also provide natural targets for multiholomorphic maps, and images of multiholomorphic maps realize associative and Cayley submanifolds in these settings respectively.

As in the pseudoholomorphic setting, a multiholomorphic map minimizes the appropriate $L^p$-energy within its homotopy class.  This variational perspective leads to regularity results on multiholomorphic maps and connects this project to the study of $p$-harmonic maps between Riemannian manifolds.

The main goal of this paper is to investigate this interesting framework first in general, and then more particularly in the setting in which a $3$-manifold is mapped into a torsion-free $\Gtwo$-manifold target.  This work looks forward to the realization of invariants generated by the topology of the moduli spaces of multiholomorphic maps, and/or applications to existence results for calibrated currents.

Here is a roadmap:
\begin{enumerate}
\item[$\bullet$] Section \ref{SubSecTriads} develops the definition of $n$-triads in general.

\item[$\bullet$]Section \ref{SubSecClass} gives a classification of such geometric structures on Riemannian manifolds by virtue of the fact that manifolds with vector cross product were classified by Brown and Gray \cite{BrownGrayVCP}.  

\item[$\bullet$]Section \ref{SubSecMCRE} defines the notion of a multiholomorphic map between manifolds equipped with compatible triads.  This notion is the primary novelty of the paper.  The relation to calibrated geometry is explained. 

\item[$\bullet$]Section \ref{SubSecCalGeom} demonstrates the connection with calibrated geometry, namely that multiholomorphic images are calibrated submanifolds (or currents).

\item[$\bullet$]Section \ref{SubSecEnergy} introduces a couple of important types of energy associated to maps between manifolds with compatible triads.  Most importantly, the notion of $L^{n+1}$-energy is introduced and it is shown that for any suitably differentiable map, this energy is equal to a positive term involving the multi-CR operator plus an integral of the pullback of the $(n+1)$-form from the target.  

\item[$\bullet$]Section \ref{SubSecVariation} investigates the variational problem associated to the $L^{n+1}$-energy.  It is seen that multiholomorphic maps are solutions to the $(n+1)$-Laplace equation ---the Euler--Lagrange equations associated to the energy functional--- and in particular are minimizers of the $L^p$-energy.

\item[$\bullet$]Section \ref{SubSecEndo} discusses the relation between multiholomorphic endomorphisms and quasiregular maps.  

\item[$\bullet$]Section \ref{SubSecCrit} considers the critical locus of a multiholomorphic map.  It is shown that a non-constant multiholomorphic endomorphism is a local homeomorphism, and hence a conformal cover.  Secondly, it is shown that the critical locus of an arbitrary multiholomorphic map has Hausdorff codimension at least $2$. 


\item[$\bullet$]Section \ref{SubSecMainGoals} describes the general motivation for the the study of multiholomorphic maps.

\item[$\bullet$]Section \ref{SubSecGtwo} delves into the case involving multiholomorphic maps from a $3$-manifold into a $\Gtwo$-manifold.  This is one of the first cases in which the multiholomorphic framework is completely novel.  After recapitulating important properties of $\Gtwo$-manifolds, there is some discussion of existence of solutions and the overdeterminedness of the MCR equations.  What remains of the paper is a proof of a Liouville-type theorem which constrains the critical locus of a multiholomorphic map from a $3$-manifold into a $\Gtwo$-target.  
\end{enumerate}
\end{section}

\begin{section}{$n$-triads} \label{SubSecTriads}  
We start this article by introducing a slightly more general triadic structure than what appears in the almost K\"{a}hler situation described in the introduction.  Such a triad includes three elements: a \emph{nondegenerate} $(n+1)$-form, an $n$-fold \emph{split product}, and a Riemannian metric along with some relevant compatibility stipulations.  The split product is \emph{not} a stand-alone generalization of an almost complex structure: it is not the same notion as an almost complex structure when $n=1$.  However, after imposing the list of compatibility conditions, the notions coincide when $n=1$.  In general the compatibility conditions constrain the split product to being a \emph{vector cross product }(VCP).  Such objects were classified by Brown and Gray in \cite{BrownGrayVCP}, and have been an important object of study in some recent work of Leung et. al; see \cite{LeeLeung08, LeeLeung09, Leung02}.  The latter sources have some close connections to this endeavor.  They are concerned with presenting a general notion of \emph{instantons} in manifolds with vector cross products --that is, submanifolds of a manifold with vector cross product which would arise as particularly nice images of the multiholomorphic maps defined here.  


\begin{Defn}
An \altemph{$n$-fold split product} on a smooth manifold $M$ is a pair $(J,K)$ of bundle homomorphisms (over the identity)
\begin{equation*}
 K\colon TM \rightarrow \Lambda^nTM, \,\,\,\,\, J\colon \Lambda^nTM \rightarrow TM
\end{equation*}
such that
\begin{equation*}
 J \circ K = (-1)^n \lambda I_{TM}
\end{equation*}
for some positive constant factor $\lambda >0$.  This condition is the same as saying the exact sequence associated to $J$, 
\begin{equation*}
 0 \rightarrow \ker(J) \rightarrow \Lambda^nTM \rightarrow TM \rightarrow 0 
\end{equation*}
is split by $\frac{(-1)^n}{\lambda} K$.
\end{Defn}

With a metric in hand, one has an induced metric $\Lambda^n g$ on $\Lambda^nTM$ by application of $\Lambda^n$ functorially:
\begin{align*}
&\Lambda^n \colon \Hom_{\R}(V,W) \rightarrow \Hom_{\R}(\Lambda^nV, \Lambda^n W) \\
&\Lambda^n\phi(v_1 \wedge \dotsc \wedge v_n) = n\sum_{\sigma \in S_n} \frac{1}{n!} \Sgn(\sigma) \phi(v_{\sigma(1)})\wedge \dotsc \wedge \phi(v_{\sigma(n)})) = n \phi(v_1) \wedge \dotsc \wedge \phi(v_n).
\end{align*}

\begin{Defn}
A bundle homomorphism $J$ on a Riemannian manifold $(M,g)$
\begin{equation*}
 J \colon \Lambda^nTM \rightarrow TM
\end{equation*}
is called a \altemph{vector cross product} if it generates with $g$ an $(n+1)$-form $\omega$,
\begin{equation*}
 g(A_0,J(A_1, \dotsc, A_n)) = \omega(A_0, \dotsc, A_n), \,\,\,\,\, \omega \in \Omega^{n+1}TM
\end{equation*}
and has comass $= 1$ in the sense
\begin{equation*}
 \lVert J(A_1, \dotsc, A_n) \rVert_g^2 = \lVert A_1 \wedge \dotsc \wedge A_n \rVert^2_{\Lambda^n g}.
\end{equation*}
\end{Defn}

There is a close connection between vector cross products and calibrated geometry because a vector cross product on a Riemannian manifold yields a calibrating form via $\omega = g(J\bullet, \bullet)$ under the additional stipulation that $d\omega = 0$.

Generalizing the notion of non-degeneracy for $2$-forms we have, 
\begin{Defn}
An $(n+1)$-form $\omega$ is called \altemph{non-degenerate} if for all non-zero tangent vectors $V$, at any tangent space, the contraction map
\begin{equation*}
\iota_V \colon T_xM \rightarrow \Lambda^nT^*_xM  
\end{equation*}
is injective. 
\end{Defn}
The condition of nondegeneracy is an open condition.  

We have all the ingredients to make the definition of a compatible triad.
\begin{Defn}
 An \altemph{$n$-compatible triad} on $M$ is a triple $(\omega,g,(J,K))$ consisting of a non-degenerate $(n+1)$-form $\omega$, a Riemannian metric $g$, and an $n$-split product $(J,K)$ such that

\begin{enumerate}
\item[(i)] $J$ is a vector cross product with respect to $g$,
\item[(ii)]
 $\omega(\zeta,B) = g(J(\zeta),B)$, and
\item[(iii)]
 $\Lambda^ng(\zeta, KA) = \omega(A,\zeta).$
\end{enumerate}
\end{Defn}

The conditions (ii), (iii) imply that $J,K$ are adjoint to each other up to $(-1)^n$ with respect to the metrics $g$, and $\Lambda^n g$.  Along with $J\circ K = (-1)^n \lambda I$ these conditions imply
\begin{equation*}
 g(A_0,A_1)  = \lambda^{-1} (-1)^n\omega(K(A_0),A_1).
\end{equation*}

\begin{Defn}
 We say a compatible triad is \altemph{$n$-plectic} if $\omega$ is closed and non-degenerate.  We refer to a manifold with such a form as \altemph{$n$-plectic}, and a triad containing such a form an \altemph{$n$-plectic triad}.
\end{Defn}
The definition of an $n$-plectic or multi-(sym)plectic form appears in work of Gotay, \cite{Gotay91}, and a recent paper by Baez, Hoffnung, and Rogers  \cite{MR2566161}.  In both cases these notions are motivated by the canonical $n$-plectic form on an $n$-form bundle.  Symplectic manifolds are $1$-plectic.

Because a triadic manifold is Riemannian it makes sense to consider the covariant derivatives of any of these tensors.  Hence,
\begin{Defn}
 Let $\nabla$ be the Levi-Civita connection associated to $g$.  Then, we say $\omega$, and $(J,K)$ are \altemph{parallel} if \begin{equation*}                                                    
\nabla \omega = 0.
\end{equation*}
\end{Defn}
An direct calculation shows that $\nabla \omega = 0$ implies $\nabla J = \nabla K = 0$.

In the examples we are about to consider, all the relevant tensors are parallel, and the $(n+1)$-forms are, in particular, closed.  But these conditions are all integrability issues and are not necessary according to the definitions above.
\end{section}
\begin{section}{Classification}\label{SubSecClass}
Brown and Gray's classification of vector cross products (VCP) on $\R^n$ yields a classification of compatible triads because the data of a VCP determines a compatible triad uniquely.  Hence the following list of examples is in fact \emph{exhaustive} modulo integrability considerations. That is to say that since their classification is a project of linear algebra on $\R^n$, it extends to a classification of these kinds of tensor structures on a smooth manifold but naturally cannot say anything about differential properties.

\begin{Example}(Hermitian Triad)
 Suppose $M$ is (almost) Hermitian.  The nondegenerate form $\omega$, (almost) complex structure $J$, and Riemannian metric $g$ form the paradigmatic example of a compatible triad ---in this case a $1$-triad.  If $\omega$ has the integrability condition $d\omega = 0$ then the triad is called an (almost) K\"{a}hler triad.
\end{Example}

\begin{Example}(Conformal Triad) \label{ConfTriads}
Suppose $M$ is an orientable, $(n+1)$-dimensional smooth, Riemannian manifold, and consider the case when the $n$-plectic form is a volume form.  Then the metric $g$, and volume form $\dVol_M$ (which is $n$-plectic) yields a canonical $n$-split product $(J,K)$  which fits into a triad.  Namely, let $J$ and $K$ be particular multiples of the Hodge dual operator on $TM$.  Explicitly,
\begin{equation*}
 J = (-1)^n \star, \,\,\,\,\, K = (-1)^n \star.
\end{equation*}
The properties of the Hodge star imply
\begin{equation*}
K \circ J = (-1)^n I, \,\,\,\,\, J \circ K = (-1)^n I,
\end{equation*}
and consequently that the Hodge star operation acting on forms on $M$ is (in degrees $1,n$) precomposition with $(-1)^nJ,(-1)^nK$.  
\end{Example}
This example is crucial because such a triadic manifold is often the domain of the maps we consider in subsequent sections.

\begin{Example}(Associative Triad)(See \cite{HarveyLawson82} for details.)\\
Consider the octonions $\Oct$, and fix an identification with $\R^8$:
\begin{equation*} 
 x^0 + x^1I + x^2J + x^3K + y^01^\prime + y^1I^\prime + y^2J^\prime + y^3K^\prime = (x^0,x^1,x^2,x^3,y^0,y^1,y^2,y^3).
\end{equation*}
$\Oct$ is equipped with the octionic product $\cdot$, the notion of real/imaginary parts, conjugation, and the usual Euclidean inner product $g(,)$: 
\begin{align*}
 & \Re(a + bI + cJ + dK + e1^\prime + fI^\prime + gJ^\prime + hK^\prime) = a, \\
 & \Im(a + bI + cJ + dK + e1^\prime + fI^\prime + gJ^\prime + hK^\prime) = bI + cJ + dK + e1^\prime + fI^\prime + gJ^\prime + hK^\prime,\\
 & \bar{A} := \Re A - \Im A,\\
 &g(A,B) := \Re(A \cdot \bar{B}).
\end{align*}
Furthermore, the octonionic product restricts to a product on $\Im\Oct = \R^7$ after projection:
\begin{equation*}
 J(A,B) := \Im(A \cdot B).
\end{equation*}
From these structures we can define an alternating three-form on $\R^7$ by
\begin{equation*}
 \omega(A,B,C) = g(J(A,B),C).
\end{equation*}
In coordinates, we have the standard volume form and metric, and
\begin{equation*}
 \omega_0 = dx^{123} - dx^1(dy^{23} + dy^{10})-dx^2(dy^{31} + dy^{20})-dx^3(dy^{12} + dy^{30}).
\end{equation*}
It is worth noting that this three-form furnishes a well-known description of the exceptional Lie group $\Gtwo$:
\begin{equation*}
 \Gtwo := \{ \sigma \in GL(\Im\Oct) \mid \sigma^* \omega_0 = \omega_0 \}
\end{equation*}
In \cite{Bryant87}, Bryant proved that the exceptional Lie group $\Gtwo$ can also be described as the group which preserves the metric and vector cross product.  For this reason, a Riemannian $7$-manifold with $\Gtwo$-holonomy is equipped with a parallel $3$-form defined via parallel transport, and a vector cross product given by
\begin{equation*}
 g(J(A,B),C) = \omega(A,B,C).
\end{equation*}
The other half of the split product, $K$ is the $g$-adjoint of $J$ and satisfies
\begin{equation*}
 g(A,B) = \frac{1}{3} \omega(K(A),B).
\end{equation*}
So any $\Gtwo$-manifold has a canonical $2$-triad.  
\begin{Defn}
Such a triad on a $\Gtwo$-manifold is called the \altemph{associative triad}.  If $\nabla \omega = 0$, $M$ is said to have a \altemph{torsion-free $\Gtwo$-structure}, a feature which is equivalent to the condition that $\omega$ is harmonic.  If $\omega$ is closed but not necessarily co-closed, the structure is called a \altemph{closed $\Gtwo$-structure}, and the terminology \altemph{$\Gtwo$-structure} denotes merely the choice on $M$ of a principal $\Gtwo$-subbundle of the frame bundle on $M$.  The latter condition implies the existence of a triad which is not necessarily closed.  See section \ref{SubSecGtwo} for a more full discussion of these facts.
\end{Defn}
\end{Example}

\begin{Example}(Cayley Triad)(See \cite{HarveyLawson82} for details.)\\
A $Spin_7$-manifold is precisely one whose tangent spaces are coherently identified with $\Oct$.  This identification means that the manifold is equipped with a triple product coming from the triple product on $\Oct$,
\begin{equation*}
 x \times y \times z := \frac{1}{2}(x(\bar{y}z)-z(\bar{y}x)).
\end{equation*}
This product defines a 4-form called the Cayley calibration:
\begin{equation*}
 \Phi(x\wedge y\wedge z \wedge w) := g(x \times y \times z,w) = g(J(x,y,z),w),\,\,\,\,\, J(A,B,C) := A \times B \times C.
\end{equation*}
As in the $\Gtwo$-case, these fit into a $3$-triad we call the \altemph{Cayley triad}.
\end{Example}
\end{section}

\begin{section}{The Multi-Cauchy--Riemann Equation} \label{SubSecMCRE}
The primary novelty of this paper is the definition and study of a non-linear PDE we call the \emph{multi-Cauchy--Riemann Equation}.  The framework of compatible triads developed in the previous sections naturally suggests such an equation as a direct generalization of the Cauchy--Riemann equation on maps from Riemann surfaces into almost-K\"ahler manifolds.

Suppose a smooth manifold $M$ is equipped with an $n$-plectic triad $(M,\omega,g,(J,K))$.  And let $X$ be a closed, compact $(n+1)$-manifold with an oriented Riemannian structure ---hence a conformal $n$-triad $(X,\dVol_X,g_X,(j=(-1)^n\star,k=(-1)^n\star))$.  

Let $u$ be a differentiable map, $u\colon X \rightarrow M$.  We use the notation $\Lambda^n du$ to denote the application of the $n$-th exterior power functor pointwise to $du$.  $\Lambda^n$ is defined by,
\begin{align*}
&\Lambda^n \colon \Hom_{\R}(V,W) \rightarrow \Hom_{\R}(\Lambda^nV, \Lambda^n W) \\
&\Lambda^n\phi(v_1 \wedge \dotsc \wedge v_n) = n\sum_{\sigma \in S_n} \frac{1}{n!} \Sgn(\sigma) \phi(v_{\sigma(1)})\wedge \dotsc \wedge \phi(v_{\sigma(n)})) = n \phi(v_1) \wedge \dotsc \wedge \phi(v_n).
\end{align*}
Consequently $\Lambda^n du$ is an element of $\Omega^n(X,\Lambda^n u^*TM)$.  

Let the function $\lvert du \rvert$ be the norm of $du$ as a vector pointwise in the metric tensor product space $T_x^*X \otimes u^*T_{u(x)}M$, that is, the Hilbert--Schmidt norm pointwise.  Explicitly, the metric is $g \otimes g_X^*$, so that 
\begin{equation*}
\lvert du \rvert = (g\otimes g_X^*(du,du))^{\frac{1}{2}}.                                                                                                              
\end{equation*}

Now we introduce a dynamical equation that mimics the Cauchy--Riemann equation:
\begin{Defn}
Let $u$ be a map $u \colon X \rightarrow M$.  The \altemph{multi-Cauchy--Riemann operator} acting on $u$ is
\begin{equation}
 \eth u := \frac{1}{(n+1)^{\frac{n-1}{2}}}\lvert du \rvert^{n-1}  du -\frac{(-1)^n}{n}J \circ \Lambda^n du \circ k.
\end{equation}
The \altemph{Multi-Cauchy--Riemann Equation (MCRE)} is
\begin{equation}
 \eth u = 0.
\end{equation}
A solution is called a \altemph{multiholomorphic map}.
\end{Defn}

It is worth noting that there is a pre-existing notion due to Gray \cite{GrayVCPPreserving} of a map which is vector cross product-preserving.  In the established notation, this would take the form
\begin{equation*}
 du \circ j = \frac1n J \circ \Lambda^n du,
\end{equation*}
which ultimately forces solutions to be isometric immersions.  This is a stronger condition that does not allow the conformal invariance that the multiholomorphic equation evidently does.  In this vein, we have the lemma,

\begin{Lem}\label{AutomLem}
 A endomorphism $u \colon X \rightarrow X$ on an $n+1$-dimensional manifold with conformal $n$-triad (Example \ref{ConfTriads}) is an orientation-preserving conformal map if and only if it is a multiholomorphic local homeomorphism.
\end{Lem}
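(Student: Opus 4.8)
The plan is to reduce the equation $\eth u = 0$ for an endomorphism to a purely pointwise, algebraic identity on the differential $du$, and then to read conformality off that identity. Since both source and target carry the conformal $n$-triad of Example \ref{ConfTriads}, we have $j = k = (-1)^n\star$ on $X$ and $J = (-1)^n\star$ on the target copy of $X$, so the second term of $\eth u$ is the composite $\tfrac{(-1)^n}{n} J\circ\Lambda^n du\circ k$, a bundle map $TX \to u^*TX$. The first step is to identify this composite. Working in a positively oriented $g$-orthonormal frame $e_1,\dots,e_{n+1}$, using the triad relation $g(v, J(A_1\wedge\cdots\wedge A_n)) = \dVol(v, A_1,\dots,A_n)$ together with $k(e_i) = (-1)^n(-1)^{i-1}e_1\wedge\cdots\wedge\widehat{e_i}\wedge\cdots\wedge e_{n+1}$, I would compute the pairing $\langle \tfrac{(-1)^n}{n}J(\Lambda^n du(k e_i)),\, du(e_j)\rangle$. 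The normalizing factor $n$ in the definition of $\Lambda^n$ cancels the $\tfrac1n$, and a short sign bookkeeping gives $\det(du)\,\delta_{ij}$. In matrix terms this reads $(du)^\top\cdot\big(\tfrac{(-1)^n}{n}J\circ\Lambda^n du\circ k\big) = \det(du)\,I$, so the second term of $\eth u$ is exactly the cofactor (adjugate) map $\mathrm{cof}(du)$, which equals $\det(du)\,(du^{-1})^\top$ when $du$ is invertible. Thus $\eth u = 0$ is equivalent, pointwise, to the identity
\[
\mathrm{cof}(du) \;=\; \frac{|du|^{\,n-1}}{(n+1)^{\frac{n-1}{2}}}\,du ,
\]
which we denote $(\ast)$.

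Granting $(\ast)$, the forward direction is immediate. If $u$ is orientation-preserving conformal then $du = \rho\,O$ pointwise with $\rho>0$ and $O\in SO(n+1)$, whence $\mathrm{cof}(du) = \rho^n O$, while $|du| = \sqrt{n+1}\,\rho$ makes the right-hand side of $(\ast)$ equal to $\rho^n O$ as well; so $\eth u=0$, and since $du$ is invertible the inverse function theorem shows $u$ is a local homeomorphism. For the converse I would exploit the universal adjugate identity $(du)^\top\,\mathrm{cof}(du) = \det(du)\,I$, which holds with no invertibility assumption. Substituting $(\ast)$ into it yields, at every point,
\[
\frac{|du|^{\,n-1}}{(n+1)^{\frac{n-1}{2}}}\,(du)^\top du \;=\; \det(du)\,I .
\]
Hence wherever $du\neq 0$ the matrix $(du)^\top du$ is a scalar multiple of the identity; the scalar is positive because $(du)^\top du$ is positive semidefinite and nonzero, and then $\det(du)>0$ fixes the orientation. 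So $du\in\R_{>0}\cdot SO(n+1)$ there, i.e. $u$ is weakly conformal and orientation-preserving with conformal factor $\rho = |du|/\sqrt{n+1}$.

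It remains to upgrade ``weakly conformal'' to ``conformal'', i.e. to exclude the degenerate locus $\{du = 0\}$, and this is where I expect the only real difficulty. The algebra above permits isolated points with $du = 0$ --- the weakly conformal analogue of a branch point --- and these are precisely the places where $u$ could fail to be a genuine conformal immersion. Here I would invoke the local-homeomorphism hypothesis, which makes $u$ nonconstant, so that $\{du\neq 0\}$ is a nonempty open set on which $u$ is $1$-quasiregular. A nonconstant weakly conformal (equivalently $1$-quasiregular) map has empty branch set by conformal rigidity --- Liouville's theorem in dimension $n+1\geq 3$, forcing $u$ to be locally a M\"obius transformation, and the classical holomorphic picture when $n=1$ --- so $du$ cannot vanish at a point without $u$ failing to be locally injective there, contradicting the hypothesis. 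Therefore $du$ is invertible everywhere, $u^*g = \rho^2 g$ with $\rho>0$, and $u$ is an orientation-preserving conformal map. The main obstacle is thus not the (routine, if sign-delicate) cofactor computation but the handling of the critical set in the converse; it is exactly the point at which the theory of quasiregular maps developed in Section \ref{SubSecEndo} enters, and one must take care to separate the two-dimensional case $n=1$ from the rigid higher-dimensional case $n\geq 2$.
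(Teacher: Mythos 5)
Your argument is correct, and at its core it is the same computation as the paper's: your adjugate identity $(du)^{\top}\,\mathrm{cof}(du)=\det(du)\,I$ is exactly the paper's step of pairing the second term of $\eth u$ against $du(e_j)$ and using that $\omega=\dVol$ is alternating (this is what Lemma \ref{Ortholem} does), and your forward direction is the paper's Hodge-star computation in an orthonormal frame, just written multiplicatively as $du=\rho O$. The genuine differences are in packaging and in the converse. Identifying $\frac{(-1)^n}{n}J\circ\Lambda^n du\circ k$ as the cofactor map is a clean reformulation that makes the link to the Cauchy--Riemann system of Section \ref{SubSecEndo} transparent, and it buys you the converse ``for free'' on the set $\{du\neq 0\}$ without dividing by $\lambda$. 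More importantly, you explicitly confront the degenerate locus $\{du=0\}$, which the paper's proof glosses over (it divides by $\lambda$ and defers to Lemma \ref{Ortholem}, which only speaks of regular points); your use of the local-homeomorphism hypothesis together with Liouville/quasiregular rigidity to exclude zeros of $du$ is the right mechanism and is more complete than what is printed. Two small cautions: the sentence ``a nonconstant weakly conformal map has empty branch set'' is false as stated for $n=1$ (consider $z\mapsto z^2$) --- what you actually need, and what you correctly use, is that a vanishing differential forces failure of local injectivity; and the Liouville input for $n\geq 2$ requires the regularity ($C^3$ or $W^{1,n+1}$) and the local normal-coordinate reduction of Section \ref{SubSecCrit}, so strictly speaking your converse needs that hypothesis made explicit, whereas the paper is silent on the point.
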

\begin{proof}
 Let $u$ be a conformal endomorphism of $X$.  Since $u^*g = \frac{1}{n+1}\lvert du \rvert^2 g$, the conformal factor is the square of
 \begin{equation*}
 \lambda := \frac{1}{(n+1)^{\frac{1}{2}}}\lvert du \rvert. 
 \end{equation*}
Let $\{ e_i \}$ be a positively-oriented, orthonormal basis for the tangent space at any point of $X$.  By the definition of the Hodge star, 
\begin{equation*}
\star(e_i) = (-1)^i e_0 \wedge \dotsc \wedge e_{i-1} \wedge e_{i+1} \wedge \dotsc \wedge e_{n}.                                                                                                                             
\end{equation*}
By hypothesis, $\{ \frac{1}{\lambda} du(e_i) \}$ is a positively-oriented, orthonormal frame for the tangent space at $u(x)$.  In the target we write 
\begin{equation*}
\hat{du(e_i)} := \sum du(e_0) \wedge \dotsc \wedge du(e_{i-1}) \wedge du(e_{i+1}) \wedge \dotsc \wedge du(e_n).                                                                                                                                     
\end{equation*}
Then, it follows similarly that $\star(\frac{1}{\lambda} \hat{du(e_i)}) = \frac{(-1)^{n-i}}{\lambda}du(e_i)$.  Putting these observations together we see
\begin{equation*}
 \frac{1}{\lambda^n} \star(\frac{1}{n}\Lambda^n du \star(e_i)) = \frac{(-1)^n}{\lambda} du(e_i).
\end{equation*}
Which is the same as,
\begin{equation*}
 \frac{\lvert du \rvert^{n-1}}{(n+1)^{\frac{n-1}{2}}}du(e_i) - \frac{(-1)^n}{n} j \circ \Lambda^n du \circ k(e_i) = 0.
\end{equation*}
To prove the other direction, we take the formula just above as the hypothesis and consider the term $\frac{1}{\lambda^2}g(du(e_i),du(e_i))$.  Since $\star$ is an isometry both for $g$ and $\Lambda^n g$, this term is merely, $g(e_i,e_i) = 1$.  Plugging in different vectors yields $g(e_i,e_j) = \delta_{ij}$.  The The following more general lemma makes the same argument in more detail. 
\end{proof}
The last observation in the lemma can be extended to the more general claim
\begin{Lem}\label{Ortholem}
 Let $u \colon X^{n+1} \rightarrow M$ be an arbitrary multiholomorphic map.  In the notation of the previous lemma, at a regular point $x$ of $u$, the vectors $\{ \frac{1}{\lambda}du(e_i)\}$ are an oriented, orthonormal $(n+1)$-frame in $T_{u(x)}M$.
\end{Lem}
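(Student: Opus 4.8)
The plan is to evaluate the multi-Cauchy--Riemann equation on the positively-oriented orthonormal frame $\{e_i\}$ of $T_xX$ and then pair the resulting vector identity against the image vectors $f_i := du(e_i)$, using the defining property (ii) of the vector cross product, $g(A_0,J(A_1,\dots,A_n)) = \omega(A_0,\dots,A_n)$, to convert everything into evaluations of the target form $\omega$. First I would unwind the operator $J\circ\Lambda^n du\circ k$ on a single basis vector. Since $k=(-1)^n\star$ and $\star(e_i)=(-1)^i e_0\wedge\cdots\wedge e_{i-1}\wedge e_{i+1}\wedge\cdots\wedge e_n$, and since $\Lambda^n du$ multiplies by $n$ while carrying the wedge of the remaining $e$'s to the wedge of the remaining $f$'s, the factors of $n$ and the powers of $(-1)^n$ cancel and $\eth u=0$ reduces at the point $x$ to the single driving identity
\[
\frac{|du|^{n-1}}{(n+1)^{(n-1)/2}}\, f_i \;=\; (-1)^i\, J\big(f_0\wedge\cdots\wedge \hat{f_i}\wedge\cdots\wedge f_n\big).
\]

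Next I would take the $g$-inner product of this identity with $f_j$ and apply property (ii) to obtain
\[
\frac{|du|^{n-1}}{(n+1)^{(n-1)/2}}\, g(f_i,f_j) \;=\; (-1)^i\, \omega\big(f_j,f_0,\dots,\hat{f_i},\dots,f_n\big).
\]
The analysis then splits into two cases dictated purely by the antisymmetry of $\omega$. When $j\neq i$, the argument $f_j$ already appears among the remaining entries, so the $\omega$-term vanishes and $g(f_i,f_j)=0$; this yields pairwise orthogonality. When $j=i$, cycling $f_i$ back into its $i$-th slot contributes a sign $(-1)^i$ which cancels the prefactor, leaving the right-hand side equal to $\omega(f_0,\dots,f_n)$, a quantity manifestly independent of $i$. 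Consequently all the norms $|f_i|^2$ coincide, and since $\sum_i|f_i|^2=|du|^2=(n+1)\lambda^2$, each equals $\lambda^2$. Thus $\{f_i/\lambda\}$ is orthonormal.

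Finally I would read off the orientation. Substituting $|f_i|^2=\lambda^2$ and $|du|=(n+1)^{1/2}\lambda$ into the diagonal relation gives $\omega(f_0,\dots,f_n)=\lambda^{n+1}$, i.e. $\omega(f_0/\lambda,\dots,f_n/\lambda)=1$; at a regular point $\lambda>0$, so the $f_i$ are linearly independent, this value is positive and attains the comass bound of property (ii), which fixes $f_0\wedge\cdots\wedge f_n$ as the correctly oriented (calibrated) frame. I do not anticipate a deep obstacle: the entire content is one application of property (ii) plus the antisymmetry of $\omega$. The only step demanding genuine care is the sign accounting --- confirming that the $(-1)^n$ from $k$, the factor $n$ from $\Lambda^n$, and the transposition sign $(-1)^i$ combine to produce the clean identity above --- together with checking that the regularity hypothesis is precisely what forces $\lambda>0$, so that the rescaled vectors form an honest orthonormal basis of their span rather than a degenerate collection.
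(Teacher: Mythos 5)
Your proposal is correct and follows essentially the same route as the paper's proof: apply the MCR equation to the orthonormal frame, pair with $du(e_j)$, and use the compatibility $g(A_0,J(A_1,\dots,A_n))=\omega(A_0,\dots,A_n)$ together with the antisymmetry of $\omega$ to get orthogonality and equality of norms. Your version is simply more explicit about the sign bookkeeping, the normalization $\sum_i|f_i|^2=|du|^2=(n+1)\lambda^2$, and the orientation/calibration statement, all of which the paper leaves terse.
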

\begin{proof}
At $x$ the MCRE is, for any $i$,
 \begin{equation*}
 \lambda^{n-1}du(e_i) = \frac{(-1)^n}{n} J \circ \Lambda^n du \circ k(e_i).
\end{equation*}
Hence,
\begin{equation*}
 \frac{1}{\lambda^{2}}g(du(e_i),du(e_i)) = \frac{\lambda^{n+1}}g(J\hat{du(e_i)},du(e_i)) = \frac{1}{\lambda^{n+1}}\omega(du(e_0) \wedge \dotsc \wedge du(e_n)).
\end{equation*}
The RHS doesn't depend on $i$, and by the anti-symmetry of $\omega$, $g(du(e_i),du(e_j))=0$ if $i \neq j$.  And, since all of these vectors are the same length, then $\lambda = \lvert du(e_i)\rvert_g$ for any $i$.
\end{proof}


\begin{Prop}
 The MCR equation is conformally equivariant.  More precisely, suppose 
\begin{equation*}
\phi\colon X^{n+1} \rightarrow X^{n+1}
\end{equation*}
is an orientation-preserving conformal homeomorphism with conformal factor $\mu^2 > 0$, that is $\phi^* g = \mu^2 g$.  Then
\begin{equation*}
 \eth (u \circ \phi) = \mu^{(n-1)} (\eth u) \circ d\phi.
\end{equation*}
As a result the solution space is conformally invariant.
\end{Prop}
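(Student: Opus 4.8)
The plan is to verify the identity pointwise by evaluating both bundle maps on a convenient frame and invoking the chain rule together with the behaviour of an orthonormal frame under a conformal map. Fix $x \in X$ and let $\{e_i\}_{i=0}^{n}$ be a positively-oriented $g$-orthonormal frame of $T_xX$. Since $\phi^*g = \mu^2 g$ and $\phi$ is orientation-preserving, the vectors $f_i := \mu^{-1} d\phi(e_i)$ form a positively-oriented $g$-orthonormal frame of $T_{\phi(x)}X$; equivalently $d\phi(e_i) = \mu f_i$. The chain rule $d(u\circ\phi)_x = du_{\phi(x)} \circ d\phi_x$ is the structural fact underlying the entire computation, and I read $(\eth u)\circ d\phi$ at $x$ as $(\eth u)_{\phi(x)} \circ d\phi_x$, which lands in $\Hom(T_xX, T_{u(\phi(x))}M)$ just like $\eth(u\circ\phi)_x$.

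First I would handle the scalar term. Because $d\phi$ rescales lengths by $\mu$, summing $g\big(du(d\phi(e_i)),du(d\phi(e_i))\big) = \mu^2\, g\big(du(f_i),du(f_i)\big)$ over $i$ gives the transformation law $\lvert d(u\circ\phi)\rvert_x = \mu\,\lvert du\rvert_{\phi(x)}$ for the Hilbert--Schmidt norm. Consequently the first term of $\eth(u\circ\phi)$ evaluated on $e_i$ is $\frac{1}{(n+1)^{(n-1)/2}}\mu^{n-1}\lvert du\rvert_{\phi(x)}^{\,n-1}\, du(d\phi(e_i))$, which is exactly $\mu^{n-1}$ times the first term of $\eth u$ evaluated on $d\phi(e_i)$, matching the corresponding term of the right-hand side.

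Next I would treat the cross-product term. Applying $k = (-1)^n\star$ to $e_i$ produces $(-1)^n(-1)^i\, e_0\wedge\cdots\wedge\hat{e}_i\wedge\cdots\wedge e_n$ exactly as in Lemma \ref{AutomLem}. Expanding $\Lambda^n d(u\circ\phi)$ on this by definition, each wedge factor is $d(u\circ\phi)(e_j) = du(d\phi(e_j)) = \mu\, du(f_j)$, so the $n$ factors together contribute $\mu^n$ and the normalization constant $n$ appears identically, yielding $\Lambda^n d(u\circ\phi)(k\,e_i) = \mu^{n}\,\Lambda^n du(k\,f_i)$; applying the target operator $J$, a pointwise map at $u(\phi(x))$ untouched by the conformal change on $X$, preserves this relation. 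On the other side, the second term of $(\eth u)\circ d\phi$ involves $k(d\phi(e_i)) = k(\mu f_i) = \mu\, k(f_i)$, so that $\mu^{n-1}$ times this term again produces a total factor $\mu^{n}$. Matching the two expressions shows the cross-product terms agree.

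The one point demanding care---the main obstacle---is bookkeeping the powers of $\mu$: the cross-product term of $\eth(u\circ\phi)$ picks up $\mu^{n}$ from the $n$-fold exterior power, whereas on the right the same $\mu^{n}$ arises as the prefactor $\mu^{n-1}$ times the single $\mu$ from $k(d\phi(e_i)) = \mu\, k(f_i)$; keeping these two sources of $\mu$ separate, and checking that the constant $n$ in the definition of $\Lambda^n$ and the sign $(-1)^n$ cancel identically on both sides, is what makes the identity come out exactly. Since both terms scale by the common factor $\mu^{n-1}$ and compose with $d\phi$ in the same way, summing them gives $\eth(u\circ\phi) = \mu^{n-1}(\eth u)\circ d\phi$. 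Finally, because $\mu > 0$ and $d\phi$ is a pointwise isomorphism, $\eth(u\circ\phi) = 0 \iff (\eth u)\circ d\phi = 0 \iff \eth u = 0$, so the solution space is conformally invariant.
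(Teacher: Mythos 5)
Your proof is correct and follows essentially the same route as the paper: both arguments verify term by term that $\lvert d(u\circ\phi)\rvert = \mu\lvert du\rvert$ and that the cross-product term picks up the same overall factor of $\mu^{n}$ on each side. The only difference is presentational --- the paper packages the key Hodge-star step as the operator identity $\tfrac{1}{n}\Lambda^n d\phi\circ k=\mu^{n-1}k\circ d\phi$ (citing Lemma \ref{AutomLem}), whereas you re-derive the same fact by evaluating on the orthonormal frame $f_i=\mu^{-1}d\phi(e_i)$.
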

\begin{proof}
First note how the Hodge star transforms:
\begin{equation*}
 \Lambda^n d\phi \circ k = \mu^{(n-1)}k \circ d\phi.
\end{equation*}
Then we have,
\begin{align*}
  \eth (u\circ \phi) &= \frac{1}{(n+1)^{\frac{n-1}{2}}}\lvert du \circ d\phi \rvert^{n-1}  du \circ d\phi -\frac{(-1)^n}{n}J \circ \Lambda^n (du \circ d\phi) \circ k \\
 &= \frac{1}{(n+1)^{\frac{n-1}{2}}} \mu^{(n-1)}\lvert du \rvert^{n-1} du \circ d\phi -\frac{(-1)^n}{n}J \circ \Lambda^n du \circ \frac{1}{n}\Lambda^n d\phi \circ k
\end{align*}
Lemma \ref{AutomLem} implies $\frac{1}{n}\Lambda^n d\phi \circ k = \mu^{n-1}k \circ d\phi$.  Plugging this into the right-hand term above implies the result, 
\begin{align*}
 &= \frac{1}{(n+1)^{\frac{n-1}{2}}} \mu^{(n-1)}\lvert du \rvert^{n-1} du \circ d\phi -\frac{\mu^{(n-1)}(-1)^n}{n}J \circ \Lambda^n du \circ k \circ d\phi.
\end{align*}
\end{proof}

In the above calculation we used the observation that multiholomorphic automorphisms $\phi \colon X \rightarrow X$ are merely orientation-preserving conformal automorphisms.  This coincidence demonstrates an important coherence of the framework of $n$-triads: precomposing a multiholomorphic map from $X$ to $M$, with an automorphism of $X$ preserves the solution set to the MCR equation between $X$ and $M$.  Therefore if $X,M$ are two $n$-triadic $(n+1)$-dimensional manifolds, then the multiholomorphic maps between them includes the set of orientation-preserving conformal maps between $X$ and $M$.  But a general solution can be singular (and hence are \emph{orientation non-reversing}).  A useful way to think of such maps is as conformal maps except that the conformal factor can vanish on some branch locus.  In the local, flat setting this notion exactly coincides with the well-studied field of $1$-quasiregular mappings.  In general, multiholomorphic maps between triadic manifolds with conformal triads can be regarded as the appropriate geometric generalizations of the notion of $1$-quasiregular mappings.  This issue is discussed more closely in section \ref{SubSecEndo}, and the reader is further encouraged to consult \cite{IwaniecMartin01} ---an excellent presentation of the modern issues in this direction.  An alternate notion is the concept in Riemannian geometry of a weakly conformal map. 
\begin{Defn}
 A map between Riemannian manifolds $u \colon (X,g_X) \rightarrow (M,g)$ is called weakly conformal if $u^*g = \lambda g_X$ for some non-negative function $\lambda$. 
\end{Defn}
Let $u \colon X \rightarrow M$ be a multiholomorphic map with the usual notation.  One can easily compute that
\begin{equation*}\label{weakconformal}
 u^*g = \frac{1}{n+1}\lvert du \rvert^2g_X.
\end{equation*}
Hence, any multiholomorphic map is, in particular, weakly conformal with weak conformal factor $\lambda :=\frac{ \lvert du \rvert^{n-1}}{n+1}.$  This observation is crucial to the unique continuation argument of section \ref{SubSecCrit}.

In what remains in this section we describe the MCR equations in the presence of isothermal coordinates.  This vantage lends some perspective about what is going on locally ---at least in the locally conformally flat case. 

Suppose $X$ is locally conformally flat (LCF).  We have local conformal coordinates given by the chart $\phi^\alpha\colon U \subset X \rightarrow \R^{n+1}$, with $\psi^\alpha := (\phi^\alpha)^{-1}$.  Then, a map $u$ satisfies the MCR equation if and only if in local conformal coordinates, $u^\alpha := u \circ \psi^\alpha$ satisfies the MCR system
\begin{equation}
 \frac{1}{(n+1)^{\frac{n-1}{2}}}(\sum^n_{j=0} \lvert \partial_j u \rvert_g^2)^{\frac{n-1}{2}} \partial_iu - (-1)^{n(n-i)}J( \partial_{i+1}u \wedge \dotsc \wedge \partial_{i-1}u) = 0.
\end{equation}
The indices in the right-most term are understood to increase in order from $i+1$ to $i-1$ modulo $n+1$.  Applying $g(\partial_i u,\bullet)$ to both sides yields
\begin{equation*}
 C\left\lvert \partial_i u \right\rvert^2 = (-1)^{n(n-i)}g(\partial_i u,J(\partial_{i+1}u \wedge \dotsc \wedge \partial_{i-1}u))
 = \omega(\partial_0 u\wedge \dotsc \wedge \partial_n u) 
\end{equation*}
for some $C$ subsuming all of the coefficients.  So $\lvert \partial_i u \rvert = \lvert \partial_ju \rvert$ for all $i$,$j$.  Similarly, by the same kind of argument all of these partials $\partial_i u $ must be mutually orthogonal because of the fact that $\omega() = g(,J)$ is an alternating form.
Hence, we can simplify slightly in these conformal coordinates:
\begin{Prop}
 In local conformal coordinates $\{x_0, \dotsc, x_n\}$, the MCR equation on $u\colon U \subset \R^n \rightarrow M$ is satisfied if and only if, $\{ \partial_i u \}$ is a positively-oriented orthogonal frame in $T_{u(x)}M$ for all $x$, and $\lvert \partial_i u \rvert = \lvert \partial_j u \rvert$ for all $i,j$.  Hence, we could write the equations (with indices understood mod $n+1$)
\begin{equation*}
 \lvert \partial_i u \rvert^{\frac{n-1}{2}}\partial_i u = (-1)^{n(n-i)}J(\partial_{i+1}u \wedge \dotsc \wedge \partial_{i-1}u).
\end{equation*}
\end{Prop}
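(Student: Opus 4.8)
The plan is to prove the stated equivalence by splitting it into its two implications, organizing everything around the single linear-algebra identity $J(\partial_{i+1}u\wedge\dotsc\wedge\partial_{i-1}u)=(-1)^{n(n-i)}\lvert\partial_i u\rvert^{n-1}\partial_i u$ that both directions pivot on. Throughout I work pointwise at a regular point $x$, where $du_x$ has full rank; where $du_x=0$ every component equation reads $0=0$ and the ``frame'' description is understood vacuously, so the equivalence is really a statement on the open set $\{du\neq 0\}$. I also note at the outset that ``positively oriented'' must mean oriented so that $\omega$ evaluates to $+1$ on the normalized frame (the calibrated orientation), since the target $M$ need not be $(n+1)$-dimensional. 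I write the conformal MCR system as the collection, for $i=0,\dots,n$, of
\begin{equation*}
\frac{1}{(n+1)^{\frac{n-1}{2}}}\Big(\sum_{j=0}^n \lvert \partial_j u\rvert_g^2\Big)^{\frac{n-1}{2}}\partial_i u = (-1)^{n(n-i)} J(\partial_{i+1}u\wedge\dotsc\wedge\partial_{i-1}u),
\end{equation*}
with indices read modulo $n+1$.

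For the forward direction, which is essentially carried out in the computation preceding the statement, I would pair the $i$-th equation against $g(\partial_i u,\bullet)$ and against $g(\partial_j u,\bullet)$ for $j\neq i$. The first pairing gives $C\lvert\partial_i u\rvert^2=\omega(\partial_0 u\wedge\dotsc\wedge\partial_n u)$ for a constant $C$ independent of $i$, so all the $\lvert\partial_i u\rvert$ coincide; call the common value $\lambda$. The second pairing sends the right-hand side to $\omega(\partial_j u,\partial_{i+1}u,\dotsc,\partial_{i-1}u)$, and since $\partial_j u$ already occurs in the wedge, antisymmetry of $\omega$ forces $g(\partial_i u,\partial_j u)=0$; hence the $\partial_i u$ are mutually orthogonal. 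Finally, dividing the equation through by $\lambda^n$ exhibits it in the normalized form $J(v_{i+1}\wedge\dotsc\wedge v_{i-1})=(-1)^{n(n-i)}v_i$ with $v_i:=\lambda^{-1}\partial_i u$; pairing with $v_i$ shows $\omega(v_i,v_{i+1},\dotsc,v_{i-1})=(-1)^{n(n-i)}$, and since this cyclic value agrees with the value of $\omega$ on a calibrated frame, the comass-$1$ property of $J$ forces the orientation to be positive.

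Conversely, assume $\{\partial_i u\}$ is a positively oriented orthogonal frame of common length $\lambda>0$, and set $v_i:=\lambda^{-1}\partial_i u$, an oriented orthonormal frame. The heart of the matter is again the identity
\begin{equation*}
J(v_{i+1}\wedge\dotsc\wedge v_{i-1})=(-1)^{n(n-i)}v_i .
\end{equation*}
I would establish it by three observations: $J$ of the wedge is orthogonal to every $v_j$ with $j\neq i$ (antisymmetry of $\omega$ once more), it is a unit vector by the comass-$1$ normalization since the $v_j$ are orthonormal, and its inner product with $v_i$ equals $\omega(v_i,v_{i+1},\dotsc,v_{i-1})$, a cyclic rearrangement of the calibrated value $\omega(v_0,\dotsc,v_n)=1$. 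Rescaling by $\lambda$, substituting $\sum_j\lvert\partial_j u\rvert^2=(n+1)\lambda^2$, and collecting the factor $(n+1)^{\frac{n-1}{2}}$ then reproduces the $i$-th component equation in its stated simplified form.

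The main obstacle I anticipate is sign bookkeeping rather than any conceptual difficulty: matching the coefficient $(-1)^{n(n-i)}$ requires computing the signature of the $i$-fold cyclic shift of $n+1$ vectors --- the $i$-th power of an $(n+1)$-cycle, of sign $(-1)^{ni}$ --- and reconciling it with the Hodge-star conventions $j=k=(-1)^n\star$ fixed in Example \ref{ConfTriads}, together with the factor $(-1)^n$ relating the parities of $n^2$ and $n$ in the exponent. The same computation is what certifies, in the forward direction, that $\omega(v_0,\dotsc,v_n)=+1$ rather than some smaller value, so it is precisely here that the calibration inequality and the orientation claim are pinned down; everything else is a matter of keeping track of powers of $\lambda$ and the constant $(n+1)^{\frac{n-1}{2}}$.
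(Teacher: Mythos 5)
Your forward direction is exactly the paper's argument: the paper ``proves'' this proposition in the paragraph immediately preceding it, by pairing the $i$-th component equation with $g(\partial_i u,\bullet)$ to get $C\lvert\partial_i u\rvert^2=\omega(\partial_0 u\wedge\dotsc\wedge\partial_n u)$ (hence equal lengths) and with $g(\partial_j u,\bullet)$, $j\neq i$, to get orthogonality from the antisymmetry of $\omega$. Where you go beyond the paper is the converse, which the text never actually writes down: your derivation of $J(v_{i+1}\wedge\dotsc\wedge v_{i-1})=(-1)^{n(n-i)}v_i$ from orthogonality to the $v_j$, the comass-$1$ normalization, and the calibrated value of $\omega$ is precisely the content of the paper's later Lemma \ref{ImpLemma} (and of Lemma \ref{Ortholem}), so it is fully consistent with the paper's toolkit, just assembled here rather than deferred. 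Your opening observation that ``positively oriented'' must be read as the calibrated orientation --- i.e.\ $\omega$ evaluates to $+1$ on the normalized frame, since $M$ need not have dimension $n+1$ --- is not pedantry: without that reading the ``if'' direction is false for a generic orthonormal $(n+1)$-frame in a higher-dimensional target, so you have correctly identified the hypothesis the proposition actually needs. The residual sign bookkeeping you flag (the $(-1)^{ni}$ from the $i$-fold cyclic shift against the $(-1)^{n(n-i)}$ in the displayed system and the $(-1)^n$ in the Hodge-star convention) is real --- the paper itself is cavalier about it, absorbing constants into an unspecified $C$ --- but it does not affect the structure of the argument.
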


\end{section}  

\begin{section}{Interaction with Calibrated Geometry} \label{SubSecCalGeom} 
We recall the definition which founds the subject of calibrated geometry.  Let $M$ be a compact, closed, Riemannian manifold.  
\begin{Defn}
 Let $\omega$ be a differential $k$-form on $M$.  If $\omega$ is closed, and satisfies the metric condition that at any point in $M$, for any positively-oriented $k$-plane, $\zeta_1 \wedge \dotsc \wedge \zeta_k$,
\begin{equation*}
 \omega(\zeta_1 \wedge \dotsc \wedge \zeta_k) \leq dVol(\zeta_1 \wedge \dotsc \wedge \zeta_k),
\end{equation*}
then $\omega$ is called a \altemph{calibration}, and $M$ equipped with $\omega$ a \altemph{calibrated manifold}.
\end{Defn}

\begin{Lem}
If $M$ is equipped with a compatible $n$-triad, then it is a calibrated manifold with an $(n+1)$-calibration 
\end{Lem}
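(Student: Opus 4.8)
The plan is to check the two defining properties of a calibration for the $(n+1)$-form $\omega$: that it is closed, and that it has comass at most one, i.e. that $\omega(\zeta_1 \wedge \dotsc \wedge \zeta_{n+1}) \leq \dVol(\zeta_1 \wedge \dotsc \wedge \zeta_{n+1})$ on every positively-oriented $(n+1)$-plane. Closedness is the integrability condition already isolated above: it holds as soon as the triad is $n$-plectic (or, a fortiori, parallel), which is the situation in every example of interest, so I would take $d\omega = 0$ as the standing hypothesis and concentrate on the pointwise comass estimate, which is where the vector cross product structure actually does the work.

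First I would reduce the comass inequality to orthonormal frames. Both sides of the asserted inequality are homogeneous of degree $n+1$ in the $\zeta_i$, and any positively-oriented simple $(n+1)$-vector is a positive multiple of $e_1 \wedge \dotsc \wedge e_{n+1}$ for an oriented orthonormal frame $\{e_i\}$ produced by Gram--Schmidt; since $\dVol(e_1 \wedge \dotsc \wedge e_{n+1}) = 1$, it suffices to prove the single inequality $\omega(e_1, \dotsc, e_{n+1}) \leq 1$. For this I would use that $\omega$ is generated by $J$, writing $\omega(e_1, \dotsc, e_{n+1}) = g(e_1, J(e_2 \wedge \dotsc \wedge e_{n+1}))$, and then apply Cauchy--Schwarz:
\begin{equation*}
\omega(e_1, \dotsc, e_{n+1}) = g\bigl(e_1, J(e_2 \wedge \dotsc \wedge e_{n+1})\bigr) \leq \lVert e_1 \rVert_g \, \lVert J(e_2 \wedge \dotsc \wedge e_{n+1}) \rVert_g .
\end{equation*}
The first factor is $1$, and the comass-one property of the vector cross product identifies the second factor with $\lVert e_2 \wedge \dotsc \wedge e_{n+1} \rVert_{\Lambda^n g}$. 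Note that Cauchy--Schwarz also bounds the opposite sign, so no orientation convention for $\omega$ is needed to obtain the one-sided calibration bound.

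The step I expect to require the most care is the concluding bookkeeping of the induced metric: because the functorial $\Lambda^n$ used to define $\Lambda^n g$ carries the factor of $n$ recorded earlier, one must confirm that the normalization is arranged so that an orthonormal $n$-frame has unit $\Lambda^n g$-norm. The ``comass $= 1$'' clause in the definition of a vector cross product is precisely the hypothesis that pins the constant in the estimate to be exactly $1$, turning the bound $\omega \leq \dVol$ into a genuine calibration rather than merely a boundedness statement. Finally I would observe that Cauchy--Schwarz is an equality exactly when $e_1$ is parallel to $J(e_2 \wedge \dotsc \wedge e_{n+1})$, so the calibrated $(n+1)$-planes are precisely the images of $J$; this sharpness is what makes the connection to calibrated geometry substantive and foreshadows the identification of multiholomorphic images as calibrated submanifolds.
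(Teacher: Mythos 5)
Your proof is correct, and it is essentially the argument behind the paper's proof, which consists of a single citation: the paper simply invokes the ``well-known fact'' that a vector cross product makes $\omega$ a calibration (referring to Lee--Leung), and your Cauchy--Schwarz-plus-comass-one computation is precisely the standard proof of that fact, with the reduction to orthonormal frames and the normalization of $\Lambda^n g$ handled correctly. You are also right to flag that closedness of $\omega$ is not guaranteed by the bare compatible-triad hypothesis and must be supplied by the $n$-plectic (or parallel) condition --- a point the lemma's statement and the paper's one-line proof both gloss over --- and your observation that equality in Cauchy--Schwarz characterizes the calibrated planes as images of $J$ is a genuine bonus the paper does not record here.
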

\begin{proof}
This follows from the well-known fact that the existence of a vector cross product implies that $\omega$ is a calibration, for instance see \cite{LeeLeung08}. 
\end{proof}

A calibration determines a distinguished class of submanifolds.
\begin{Defn}
 Let $M,g,\omega$ be a Riemannian manifold with $k$-calibration $\omega$.  A $k$-dimensional submanifold on which $\omega$ restricts to the $k$-dimensional volume form is called a \altemph{calibrated submanifold}.
\end{Defn}
One might consider the distribution in $\Lambda^kTM$ consisting of oriented $k$-planes on which $\omega$ restricts to the volume form.  Then the calibrated submanifolds are the submanifolds which are integral for this distribution. 

The fundamental connection between multiholomorphic maps and calibrated geometry is the following lemma.  Keeping the usual notation, 
\begin{Lem}
 Let $u \colon X^{n+1} \rightarrow M^{m}$ be a multiholomorphic embedding.  Then, $\mathrm{image}(u)$ is a calibrated submanifold in $M$.  If $u$ is an arbitrary multiholomorphic map, then $\mathrm{image}(u)$ is a calibrated current.
\end{Lem}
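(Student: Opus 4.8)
The plan is to reduce the embedding case to a pointwise computation that has essentially already been carried out in Lemma \ref{Ortholem}, and then to upgrade to the current statement by a measure-theoretic argument on the regular locus. Since the triad on $M$ is $n$-plectic, the form $\omega$ is closed, and by the preceding lemma it is an $(n+1)$-calibration of comass one. Thus an $(n+1)$-submanifold is calibrated precisely when $\omega$ restricts to its Riemannian volume form, i.e. when each oriented tangent $(n+1)$-plane is a \emph{calibrated plane} on which $\omega$ attains the value one. So it suffices to verify this pointwise condition on the tangent planes of $\mathrm{image}(u)$.

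First I would treat the embedding. Let $x \in X$ and let $\{e_i\}$ be a positively-oriented orthonormal frame of $T_xX$. An embedding is in particular an immersion, so every point is regular and $\{du(e_i)\}$ spans $T_{u(x)}\,\mathrm{image}(u)$. By Lemma \ref{Ortholem} the rescaled vectors $\{\tfrac{1}{\lambda}du(e_i)\}$, with $\lambda = (n+1)^{-1/2}\lvert du\rvert$, form an oriented orthonormal $(n+1)$-frame of this tangent space, and the computation inside that lemma gives $\omega(du(e_0)\wedge\dotsc\wedge du(e_n)) = \lambda^{n+1}$. Hence
\begin{equation*}
 \omega\!\left(\tfrac{1}{\lambda}du(e_0)\wedge\dotsc\wedge\tfrac{1}{\lambda}du(e_n)\right) = \tfrac{1}{\lambda^{n+1}}\,\omega(du(e_0)\wedge\dotsc\wedge du(e_n)) = 1,
\end{equation*}
so $\omega$ attains its comass on each tangent plane and therefore restricts to the volume form. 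This says exactly that $\mathrm{image}(u)$ is calibrated.

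For an arbitrary multiholomorphic map I would pass to the pushforward current $T := u_*[X]$, defined by $\langle T,\alpha\rangle = \int_X u^*\alpha$. Because $u$ is differentiable (and, by the $L^{n+1}$-minimization of Section \ref{SubSecVariation}, Lipschitz) and $X$ is compact, $T$ is a rectifiable $(n+1)$-current of finite mass. On the regular locus of $u$ the pointwise computation above applies verbatim, so the oriented approximate tangent plane of $T$ is a calibrated plane and $u^*\omega = \lambda^{n+1}\,\dVol_X$ there. The critical locus is where the argument must be controlled; here I would invoke the fact (established in Section \ref{SubSecCrit}) that it has Hausdorff codimension at least two, so it is negligible for $\|T\|$ and contributes no mass. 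Combining these, and using that multiholomorphic maps are orientation non-reversing so that no orientation cancellation occurs, one gets $\langle T,\omega\rangle = \int_X u^*\omega = \int_X \lambda^{n+1}\,\dVol_X = \mathbf{M}(T)$, which is precisely the assertion that $T$ is calibrated by $\omega$.

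The main obstacle is the current case rather than the embedding case. The linear algebra at a regular point is immediate from Lemma \ref{Ortholem}; the real work is justifying that the pushforward is a genuine rectifiable current of finite mass and, above all, that the singular set carries no mass and induces no cancellation. The last point is exactly where the codimension-two estimate for the critical locus is needed, and it is also the step that forces one to commit to a regularity class for $u$ sufficient to make $u_*[X]$ well behaved; Lipschitz regularity, furnished by the energy-minimization property, is the natural hypothesis here.
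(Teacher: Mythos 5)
Your proof of the embedding case is essentially identical to the paper's: both evaluate $\omega$ on the frame $\{du(e_i)\}$ via Lemma \ref{Ortholem} and conclude that $\omega$ restricts to the induced volume form on each tangent plane. For the current case the paper merely cites $C^{1,\alpha}$ interior regularity to assert the image is a current, whereas you supply the fuller geometric-measure-theoretic argument (pushforward, area formula, control of the singular set); your version is the more complete one, and note that the codimension-two estimate is not actually needed there, since weak conformality forces $du=0$ on the entire critical locus, so that set contributes nothing to the mass or to $\int_X u^*\omega$ regardless of its size.
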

\begin{proof}
 The first claim follows by observing the MCRE equations at the origin of local normal coordinates.  Let $\{ e_i \}$ be the oriented orthonormal coordinate frame at the origin.  We have, by Lemma \ref{Ortholem}, that the set $\{du(e_i)\}$ is orthogonal.  And by the calculations in the proof of that Lemma,
\begin{align*}
 u^*\omega(e_0 \wedge \dotsc \wedge e_n) =& \omega(du(e_0) \wedge \dotsc \wedge du(e_n)) = \lambda^{n-1}g(du(e_i),du(e_i))= \lambda^{n+1}.
\end{align*}
Hence, 
\begin{equation*}
 \omega(du(e_0) \wedge \dotsc \wedge du(e_n)) = \prod_{i} g(du(e_i),du(e_i)) = \mathrm{det}(g(du(e_i),du(e_j)))^{\frac12} = dVol_g(du(e_0)\wedge \dotsc \wedge du(e_n))
\end{equation*}
We have proven that $\omega$ restricts to the volume form on the tangent plane to the embedding at any point.

The second claim follows from the regularity theory for the $p$-Laplace operator ---solutions have $C^{1,\alpha}$-interior regularity (see section \ref{SubSecVariation}), and hence their images are currents.
\end{proof}

\end{section}

\begin{section}{Energy Identities}\label{SubSecEnergy}
The MCR equation is closely intertwined with a variational problem related to the $(n+1)$-energy:
\begin{Defn}

Given a compact smooth $(n+1)$-manifold $X$ and any smooth manifold $M$ both with compatible $n$-triads $(X,g_X,\dVol_X,(j,k))$, $(M,g_M,\omega,(J,K))$, the \altemph{$(n+1)$-energy} of a map $u\colon X \rightarrow M$ is
\begin{equation*}
 E_{n+1}(u) := \frac{1}{(n+1)^{\frac{n+1}{2}}}\int_{X} \lvert du \rvert^{n+1} \dVol_X. 
\end{equation*}
The integrand (including the coefficient) is called the \altemph{energy density} and is denoted $e_{n+1}$.
\end{Defn}
Note that if $X$ is not compact the energy could be infinite.
 
A first observation about multiholomorphic maps:
\begin{Prop} 
The energy of a multiholomorphic map whose domain is a compact, $n$-triadic, $(n+1)$-manifold is a topological invariant.  Specifically, let $u$ be a differentiable map
\begin{equation*}
 u\colon (X,g_X,\dVol_X,(j,k)) \rightarrow (M,g,\omega,(J,K))  
\end{equation*}
between a compact $n$-triadic $(n+1)$-manifold $X$ and an $n$-triadic manifold $M$ that satisfies $\eth u = 0$.  Then
\begin{equation*}
 E_{n+1}(u) = \int_X u^*\omega
\end{equation*}
\end{Prop}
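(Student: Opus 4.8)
The plan is to establish the stronger \emph{pointwise} identity
\begin{equation*}
 u^*\omega = e_{n+1}\,\dVol_X
\end{equation*}
as top-degree forms on $X$, after which the proposition follows immediately by integration, since $\int_X e_{n+1}\,\dVol_X = E_{n+1}(u)$ by definition of the energy density. In other words, the whole content is a local computation; the integral over $X$ is then trivial.

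To prove the pointwise identity I would fix a point $x \in X$ and choose a positively-oriented orthonormal frame $\{e_i\}_{i=0}^n$ for $T_xX$, exactly as in the proofs of Lemmas \ref{AutomLem} and \ref{Ortholem}. Because both $u^*\omega$ and $\dVol_X$ have top degree, it suffices to compare their values on $e_0 \wedge \dotsc \wedge e_n$; as $\dVol_X(e_0 \wedge \dotsc \wedge e_n) = 1$, the identity reduces to the scalar statement $u^*\omega(e_0 \wedge \dotsc \wedge e_n) = e_{n+1}(x)$. To extract this I would pair the MCR equation $\lambda^{n-1}du(e_i) = \frac{(-1)^n}{n}J\circ\Lambda^n du\circ k(e_i)$, with $\lambda := (n+1)^{-1/2}\lvert du\rvert$, against $du(e_i)$ in the target metric $g$. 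Using $g(A_0,J(A_1,\dotsc,A_n)) = \omega(A_0,\dotsc,A_n)$ and unwinding $\Lambda^n du\circ k(e_i)$ into the wedge of the $du(e_j)$ with $j=i$ omitted, the right-hand side becomes $\omega(du(e_0)\wedge\dotsc\wedge du(e_n)) = u^*\omega(e_0\wedge\dotsc\wedge e_n)$, independent of $i$. This is precisely the computation already carried out in Lemma \ref{Ortholem} and in the calibration lemma, yielding $\lambda^{n-1}\lvert du(e_i)\rvert^2 = u^*\omega(e_0\wedge\dotsc\wedge e_n)$ for every $i$. Summing over $i$ gives $\lambda^{n-1}\lvert du\rvert^2 = (n+1)\,u^*\omega(e_0\wedge\dotsc\wedge e_n)$; substituting $\lambda = (n+1)^{-1/2}\lvert du\rvert$ rewrites the left side as $(n+1)^{-(n-1)/2}\lvert du\rvert^{n+1} = (n+1)\,e_{n+1}(x)$, and cancelling the common factor $(n+1)$ delivers the desired scalar identity.

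The one place I expect any genuine subtlety is the behavior at critical points of $u$, where Lemma \ref{Ortholem}, stated for regular points, does not directly apply. The pairing argument above sidesteps this difficulty because it never divides by $\lambda$: the relation $\lambda^{n-1}\lvert du(e_i)\rvert^2 = u^*\omega(e_0\wedge\dotsc\wedge e_n)$ is a direct algebraic consequence of the MCR equation and holds verbatim at every point where $u$ is differentiable. Alternatively, one may prove the identity on the open dense set of regular points and extend by continuity of $\lvert du\rvert$ and $u^*\omega$, invoking that the critical locus has measure zero (indeed Hausdorff codimension at least $2$, per Section \ref{SubSecCrit}). Finally, the topological-invariance assertion in the statement is immediate once $E_{n+1}(u) = \int_X u^*\omega$ is in hand: since $X$ is closed and $\omega$ is $n$-plectic, hence closed, Stokes' theorem shows $\int_X u^*\omega$ depends only on the homotopy class of $u$, so the energy is a homotopy---and in particular topological---invariant.
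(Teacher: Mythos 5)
Your argument is correct and is essentially the paper's own proof: both substitute the MCR equation into one factor of $du$ in $\lvert du\rvert^{n+1} = \lvert du\rvert^{n-1}\langle du,du\rangle$ and identify the resulting pairing with $(n+1)\,u^*\omega$ pointwise, the only difference being that you work in an orthonormal frame and sum over $i$ while the paper phrases the same computation invariantly via $g\otimes g_X^*$ and the Hodge-star identity. Your added remarks on critical points (no division by $\lambda$) and on closedness of $\omega$ for the topological-invariance claim are correct and, if anything, make the argument slightly more careful than the paper's.
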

\begin{proof}
Consider 
\begin{align*}
 \frac{1}{(n+1)^{\frac{n+1}{2}}}\int_X \lvert du \rvert^{n+1}\dVol_X &= \frac{1}{(n+1)^{\frac{n+1}{2}}} \int_X \lvert du \rvert^{n-1}(g \otimes g^*_X)(du,du)\dVol_X \\
 &= \int_X \frac{1}{(n+1)}(g \otimes g^*_X)(du,\frac{(-1)^n}{n}J \circ \Lambda^n du \circ k)\dVol_X   
\end{align*}
Since $\circ k = (-1)^n \star$, $g^*_X(\alpha,\beta)\dVol_X = \alpha \wedge (-1)^n\beta \circ k$ for any $\alpha$, $\beta$, $1$-forms on $X$.  So that
\begin{equation*}
 g \otimes g_X^*(du, \frac{(-1)^n}{n}J \circ \Lambda^n du \circ k)\dVol_X = g(du,\frac{(-1)^n}{n} J \circ \Lambda^n du) = (n+1)\cdot u^*\omega
\end{equation*}
Hence,
\begin{equation*}
 E_{n+1}(u)= \frac{1}{(n+1)^{\frac{n+1}{2}}}\int_X \lvert du \rvert^{n+1}\dVol_X = \int_X u^*\omega,   
\end{equation*}
as desired.
\end{proof}
This estimate shows that if one fixes the topological class of a family of multiholomorphic maps then these have a fixed, finite energy.  It also corresponds to a uniform $W^{1,n+1}$-bound on such a class.  However, it is not immediately obvious that this is a variational identity.  In the study of pseudoholomorphic curves we have the more robust equation for \emph{any} map $u$
\begin{equation}\label{PdoEI}
 \frac{1}{2}\int_\Sigma \lvert du \rvert^2 \dVol_\Sigma = \frac{1}{2}\int_X \lvert \bar{\partial}u \rvert^2 + \int_X u^*\omega
\end{equation}
in which both quantities on the right are positive.  This identity shows that a pseudoholomorphic curve \emph{minimizes} energy, and hence can be realized as a critical point of the energy functional.  This perspective yields (by virtue of the Euler--Lagrange equations associated to this energy functional) the important fact that a pseudoholomorphic curves are harmonic.

We now consider an alternate notion of energy we call the \emph{mixed energy} which leads to a direct generalization of the energy identity for pseudoholomorphic curves (\ref{PdoEI}) and immediately expresses a minimizing property.  In fact the $(n+1)$-energy is minimized by multiholomorphic maps although it was not obvious from the start; this fact follows from the calculations below.  The important result is summarized in the \emph{$(n+1)$-energy identity} at the end of the section.

\begin{Defn}
The \altemph{mixed energy} of a map $u \colon X \rightarrow M$ is
\begin{equation*}
 E_{mix}(u):= \int_X \left[\frac{1}{2(n+1)^{\frac{n+1}{2}}}\lvert du \rvert^{n+1} +  \frac{(n+1)^{\frac{n-3}{2}}}{2\lvert du \rvert^{n-1}}\left\lvert \frac{1}{n} \Lambda^n du \right\rvert^2\right]\dVol_X.
\end{equation*}
The integrand is called the \altemph{mixed energy density} and is denoted $e_{mix}(u)$.
\end{Defn}

Note that the term in $E_{mix}$ involving $\frac{\left \lvert \frac{1}{n}\Lambda^n du  \right \rvert}{\lvert du \rvert^{n-1}}$ does not blow up at critical points of $u$ (as long as $du$ is continuous) by virtue of the relevant Hadamard inequality \citep{IwaniecMartin01}{sec. 9.9}:
\begin{equation} \label{Hadamard}
 (n+1)^{\frac{n-1}{2}}\lvert \frac{1}{n}\Lambda^n du \rvert \leq \lvert du \rvert^n.
\end{equation}

We have the following important identity:
\begin{Thm}(Mixed Energy Identity)
For any differentiable map $u \colon X \rightarrow M$,
\begin{equation*}\label{MixedEnergyIdentity}
 E_{mix}(u) = \frac{(n+1)^{\frac{n-3}{2}}}{2}\int_X \frac{\lvert \eth u \rvert^2}{\lvert du \rvert^{n-1}}\dVol_X + \int_X u^*\omega.
\end{equation*}
\end{Thm}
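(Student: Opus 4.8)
\medskip

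The plan is to expand the pointwise squared norm of $\eth u$ and recognize its three pieces: the two diagonal terms reproduce the two summands of the mixed energy density, while the cross term reproduces the pullback $u^*\omega$, reusing the computation from the preceding proposition. Write $\eth u = A - B$ with
$$A := \tfrac{1}{(n+1)^{(n-1)/2}}\lvert du\rvert^{n-1}\,du, \qquad B := \tfrac{(-1)^n}{n}\,J\circ\Lambda^n du\circ k,$$
so that, in the bundle metric $g\otimes g_X^*$ on $T^*X\otimes u^*TM$, one has pointwise $\lvert\eth u\rvert^2 = \lvert A\rvert^2 + \lvert B\rvert^2 - 2\langle A,B\rangle$. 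First I would dispatch the easy diagonal term: $\lvert A\rvert^2 = (n+1)^{-(n-1)}\lvert du\rvert^{2n}$ is immediate, and after multiplication by $\tfrac{(n+1)^{(n-3)/2}}{2\lvert du\rvert^{n-1}}$ it collapses, by a bookkeeping of the exponents of $(n+1)$ and of $\lvert du\rvert$, to exactly $\tfrac{1}{2(n+1)^{(n+1)/2}}\lvert du\rvert^{n+1}$, the first summand of $e_{mix}(u)$.

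The heart of the argument is the pointwise identity $\lvert B\rvert^2 = \lvert\tfrac1n\Lambda^n du\rvert^2$, which I would verify in a positively oriented orthonormal frame $\{e_i\}_{i=0}^n$ of $T_xX$. Since $k = (-1)^n\star$ carries each $e_i$ to $\pm\,(e_0\wedge\cdots\wedge\widehat{e_i}\wedge\cdots\wedge e_n)$ and $\tfrac1n\Lambda^n du$ carries this decomposable $n$-vector to $du(e_0)\wedge\cdots\wedge\widehat{du(e_i)}\wedge\cdots\wedge du(e_n)$, the comass-$1$ property of the vector cross product $J$ gives $\lvert B(e_i)\rvert_g = \lVert du(e_0)\wedge\cdots\wedge\widehat{du(e_i)}\wedge\cdots\wedge du(e_n)\rVert_{\Lambda^n g}$. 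The right-hand side is precisely the norm of $\tfrac1n\Lambda^n du$ evaluated on the orthonormal basis vector $e_0\wedge\cdots\wedge\widehat{e_i}\wedge\cdots\wedge e_n$ of $\Lambda^n T_xX$, so summing over $i$ equates the two Hilbert--Schmidt norms; this contributes exactly the second summand of $e_{mix}(u)$. I expect this to be the main obstacle, because the comass condition is a statement about decomposable $n$-vectors only, and the argument works precisely because $\Lambda^n du\circ k$ maps the frame $\{e_i\}$ into decomposable elements.

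Finally, for the cross term I would factor $2\langle A,B\rangle = \tfrac{2}{(n+1)^{(n-1)/2}}\lvert du\rvert^{n-1}\,\langle du, B\rangle$, so that $\tfrac{(n+1)^{(n-3)/2}}{2}\cdot\tfrac{2\langle A,B\rangle}{\lvert du\rvert^{n-1}} = \tfrac{1}{n+1}\,\langle du,\,\tfrac{(-1)^n}{n}J\circ\Lambda^n du\circ k\rangle$. The computation carried out in the proof of the preceding proposition (the one showing $E_{n+1}(u)=\int_X u^*\omega$ when $\eth u=0$) establishes $\langle du,\,\tfrac{(-1)^n}{n}J\circ\Lambda^n du\circ k\rangle\,\dVol_X = (n+1)\,u^*\omega$ as $(n+1)$-forms, whence this cross term integrates to $\int_X u^*\omega$. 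Assembling the three pieces yields, wherever $du\neq 0$, $\tfrac{(n+1)^{(n-3)/2}}{2}\,\tfrac{\lvert\eth u\rvert^2}{\lvert du\rvert^{n-1}}\,\dVol_X = e_{mix}(u)\,\dVol_X - u^*\omega$, and integrating and rearranging gives the stated identity. The division by $\lvert du\rvert^{n-1}$ at critical points is only apparent: I would phrase the argument in terms of the three individually well-defined integrands---the middle one controlled by the Hadamard inequality (\ref{Hadamard})---rather than literally dividing $\lvert\eth u\rvert^2$.
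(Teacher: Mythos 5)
Your proposal is correct and follows essentially the same route as the paper's own proof: expand $\lvert\eth u\rvert^2$, establish the pointwise norm identity $\lvert\tfrac1n J\circ\Lambda^n du\circ k\rvert = \lvert\tfrac1n\Lambda^n du\rvert$ via an orthonormal frame and the comass-one property, and identify the cross term with $(n+1)\,u^*\omega$ using the computation from the preceding energy proposition. Your remark about phrasing the argument through the three individually well-defined integrands (with the Hadamard inequality controlling the middle one at critical points) is a sensible clarification of a point the paper leaves implicit.
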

\begin{proof}
We first calculate
\begin{equation} \label{equation1}
 \left\lvert \frac{1}{n} J \circ \Lambda^n du \circ k \right\rvert = \left\lvert \frac{1}{n} \Lambda^n du \right\rvert.
\end{equation}
Consider for any unit vector $\zeta_0$, a completion to an orthonormal basis $\{ \zeta_i \}$ for $T_xX$.  The Hodge star $(-1)^nk$ sends any $\zeta_j$ to $(-1)^{n+j} \zeta_0 \wedge \dotsc \wedge \hat{\zeta_j} \wedge \dotsc \wedge \zeta_n =: (-1)^n\hat{\zeta_j}.$  Then,
\begin{align*}
 \left\lvert \frac{1}{n}J \circ \Lambda^n du \circ k \right\rvert^2 = & \sum_i \left\lvert \frac{1}{n}J \circ \Lambda^n du \circ k(\zeta_i)\right\rvert^2\\
  =& \sum_i \left\lvert \frac{1}{n} J \circ \Lambda^n du (\hat{\zeta_i}) \right\rvert^2 = \sum_i \left\lvert \frac{1}{n}\Lambda^n du (\hat{\zeta_i})\right\rvert^2
  = \left\lvert \Lambda^n du \right\rvert^2.  
\end{align*}

Next, consider the pointwise calculation of the norm of $\frac{\eth u}{\lvert du \rvert^{\frac{n-1}{2}}}$.  
\begin{equation*}
(n+1)^{\frac{n-1}{2}} \frac{\left\lvert \eth u \right\rvert^2}{\lvert du \rvert^{n-1}} = \frac{1}{(n+1)^{\frac{n-1}{2}}}\lvert du \rvert^{n+1} +  \frac{(n+1)^{\frac{n-1}{2}}}{\lvert du \rvert^{n-1}}\left\lvert \frac{1}{n} J \circ \Lambda^n du \circ k \right\rvert^2 - 2(n+1)u^*\omega(\Vol_X).  
\end{equation*}
Rewriting and integrating,
\begin{multline*}
 \int_X \left[ \frac{1}{2(n+1)^{\frac{n+1}{2}}}\left\lvert du \right\rvert^{n+1} +  \frac{(n+1)^{\frac{n-3}{2}}}{2\lvert du \rvert^{n-1}}\left\lvert \frac{1}{n} J \circ \Lambda^n du \circ k \right\rvert^2 \right] \dVol_X\\
  = \frac{(n+1)^{\frac{n-3}{2}}}{2}\int_X \frac{\lvert \eth u \rvert^2}{\left\lvert du \right\rvert^{n-1}} \dVol_X + \int_X u^*\omega.
\end{multline*}
\end{proof}


This plurality of energies are not as independent as one might think.  By similar calculations we have 
\begin{Prop}\label{ImpProp1}
For any pointwise orientation non-reversing map $u\colon X \rightarrow M$, the difference 
\begin{equation*} 
E_{n+1}(u) - \int_X u^*\omega  = \frac{(n+1)^{\frac{n-3}{2}}}{2}\int_X g \otimes g_X^*(du,\eth u)\dVol_X,
\end{equation*}
is non-negative.  This difference is $0$ if and only if $\eth u=0$.
\end{Prop}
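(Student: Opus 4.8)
The plan is to expand the integrand $g\otimes g_X^*(du,\eth u)$ pointwise using the definition of the multi-Cauchy--Riemann operator and to recognize the two pieces that result. First I would substitute
\[
\eth u = \frac{1}{(n+1)^{\frac{n-1}{2}}}\lvert du\rvert^{n-1}du - \frac{(-1)^n}{n}J\circ\Lambda^n du\circ k
\]
into the pairing. Linearity of $g\otimes g_X^*$ in the second slot splits the expression into a diagonal term $\frac{1}{(n+1)^{\frac{n-1}{2}}}\lvert du\rvert^{n-1}\,g\otimes g_X^*(du,du)=\frac{1}{(n+1)^{\frac{n-1}{2}}}\lvert du\rvert^{n+1}$ and a cross term. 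The cross term is exactly the quantity already computed in the proof of the $(n+1)$-energy proposition: the pointwise identity $g\otimes g_X^*\!\left(du,\frac{(-1)^n}{n}J\circ\Lambda^n du\circ k\right)\dVol_X=(n+1)\,u^*\omega$, which is valid for \emph{any} differentiable map, lets me replace it by a multiple of the pullback form. Integrating over $X$ and collecting the constant factors then reproduces the stated identity, with $E_{n+1}(u)$ arising from the diagonal term and $\int_X u^*\omega$ from the cross term.

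For non-negativity I would argue pointwise at a point $x$, choosing a positively-oriented orthonormal frame $\{e_i\}$ of $T_xX$ and writing the diagonal piece as $\frac{\lvert du\rvert^{n+1}}{(n+1)^{\frac{n-1}{2}}}$ and the cross piece as $(n+1)\,\omega(du(e_0)\wedge\dotsc\wedge du(e_n))$. The estimate comes from chaining three inequalities: the comass-one (calibration) bound $\omega(du(e_0)\wedge\dotsc\wedge du(e_n))\leq\lvert du(e_0)\wedge\dotsc\wedge du(e_n)\rvert$, the Hadamard inequality (\ref{Hadamard}) bounding the wedge by a power of $\lvert du\rvert$, and the arithmetic--geometric mean inequality $\prod_i\lvert du(e_i)\rvert\leq \frac{\lvert du\rvert^{n+1}}{(n+1)^{\frac{n+1}{2}}}$. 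The orientation non-reversing hypothesis is what guarantees the correct sign in the calibration step, so that the three inequalities combine to give $g\otimes g_X^*(du,\eth u)\geq 0$ pointwise, and hence after integration.

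Finally I would treat the equality case by tracing back through this chain. The difference vanishes iff the integrand vanishes almost everywhere, which forces simultaneous equality in all three inequalities: equality in the AM--GM step forces $\lvert du(e_i)\rvert$ to be independent of $i$, equality in Hadamard forces the $du(e_i)$ to be mutually orthogonal, and equality in the calibration bound forces the image $(n+1)$-plane to be calibrated and positively oriented. These are precisely the conditions characterizing multiholomorphic maps in conformal coordinates (the coordinate characterization at the end of Section~\ref{SubSecMCRE}, cf.\ Lemma~\ref{Ortholem}), so they are equivalent to $\eth u=0$. The main obstacle I anticipate is the behavior at critical points, where $\lvert du\rvert=0$ and the factor $\lvert du\rvert^{n-1}$ together with the normalizations in $\eth u$ degenerate; there I would note that the integrand extends continuously by the Hadamard inequality (\ref{Hadamard}), just as observed for the mixed energy density, and that such points contribute nothing to either side, so the pointwise argument and its equality analysis remain valid.
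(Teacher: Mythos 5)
Your proposal is correct and follows essentially the same route as the paper: the identity comes from splitting $g\otimes g_X^*(du,\eth u)$ into the diagonal term and the cross term already computed in the earlier energy proposition, and non-negativity plus the equality analysis come from chaining the comass/calibration bound with the Hadamard inequality (\ref{Hadamard}) pointwise, with the equality case forcing $du$ to be a conformal isometry onto a calibrated plane and hence $\eth u=0$ via Lemma \ref{ImpLemma}. Your only deviations are cosmetic --- you factor the Iwaniec--Martin form of Hadamard into the classical determinant inequality plus AM--GM, and you worry about critical points where in fact the integrand simply vanishes since $\lvert du\rvert^{n-1}$ sits in the numerator --- neither of which changes the argument.
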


First a lemma,
\begin{Lem}\label{ImpLemma}
 Let $\zeta_1, \dotsc, \zeta_n$ be an orthonormal $n$-frame in $TM$, and $\zeta_0$ a unit vector.  Then $\omega(\zeta_0 \wedge \dotsc \wedge \zeta_n) = 1$ if and only if $\zeta_0 = J(\zeta_1 \wedge \dotsc \wedge \zeta_n)$. 
\end{Lem}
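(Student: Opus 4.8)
The plan is to reduce the statement to the Cauchy--Schwarz inequality by exploiting the two defining properties of the vector cross product $J$: that it generates $\omega$ via $g$, and that it has comass one. First I would set $w := J(\zeta_1 \wedge \dotsc \wedge \zeta_n)$ and observe that, since the frame $\{\zeta_1, \dotsc, \zeta_n\}$ is orthonormal, the decomposable $n$-vector $\zeta_1 \wedge \dotsc \wedge \zeta_n$ has unit norm in $\Lambda^n g$ (the induced metric is the Gram determinant, which equals $\det(I) = 1$ on an orthonormal frame). The comass-one condition then yields $\lVert w \rVert_g = 1$, so $w$ is itself a unit vector.

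Next I would rewrite the quantity of interest using the generating property of $J$:
\[
\omega(\zeta_0 \wedge \dotsc \wedge \zeta_n) = g(\zeta_0, J(\zeta_1 \wedge \dotsc \wedge \zeta_n)) = g(\zeta_0, w).
\]
Since both $\zeta_0$ and $w$ are unit vectors, Cauchy--Schwarz gives $g(\zeta_0, w) \leq \lVert \zeta_0 \rVert_g \lVert w \rVert_g = 1$, with equality precisely when $\zeta_0 = w$. Here it matters that we are at the value $+1$ rather than $-1$: equality in Cauchy--Schwarz forces $\zeta_0 = \pm w$, and the sign is pinned down by the requirement that the inner product equal $+1$. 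This immediately gives both directions: $\omega(\zeta_0 \wedge \dotsc \wedge \zeta_n) = 1$ holds if and only if $g(\zeta_0, w) = 1$, if and only if $\zeta_0 = w = J(\zeta_1 \wedge \dotsc \wedge \zeta_n)$.

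The argument is essentially a restatement of the fact that a comass-one calibration attains its maximum exactly on the cross-product plane, so I do not expect a serious obstacle. The one point requiring care is the normalization of the induced metric $\Lambda^n g$ on decomposable elements: I would confirm that, under the convention in force, an orthonormal $n$-frame wedges to a unit $n$-vector, so that the comass condition really does produce $\lVert w \rVert_g = 1$ rather than some scalar multiple. Once that normalization is fixed, the remainder is a one-line application of Cauchy--Schwarz together with the equality-case analysis.
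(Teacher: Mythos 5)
Your proof is correct and follows essentially the same route as the paper's: both arguments rest on the comass-one condition to make $J(\zeta_1\wedge\dotsc\wedge\zeta_n)$ a unit vector and on the defining relation $\omega(\zeta_0\wedge\dotsc\wedge\zeta_n)=g(\zeta_0,J(\zeta_1\wedge\dotsc\wedge\zeta_n))$, with the equality case of Cauchy--Schwarz (which the paper leaves implicit in its final ``Hence'') pinning down $\zeta_0$. Your normalization worry is resolved exactly as the paper assumes: an orthonormal frame wedges to a unit $n$-vector in $\Lambda^n g$.
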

\begin{proof}
Since $\omega$ is a calibration, we have 
\begin{equation*}    
\omega(\zeta_0\wedge \dotsc \wedge \zeta_n) \leq \lvert \zeta_0 \wedge \dotsc \wedge \zeta_n \rvert \leq 1.                                          
\end{equation*}
If $\zeta_0 = J(\zeta_1 \wedge \dotsc \wedge \zeta_n)$ then the compatibility conditions imply
\begin{equation*}
\omega(\zeta_0 \wedge \dotsc \wedge \zeta_n) = g(J(\zeta_1 \wedge \dotsc \wedge \zeta_n),J(\zeta_1 \wedge \dotsc \wedge \zeta_n)) = \lvert \zeta_1 \wedge \dotsc \wedge \zeta_n \rvert^2 = 1.
\end{equation*}
On the other hand, if $\eta$ is a unit vector for which $\omega(\eta\wedge \zeta_1 \wedge \dotsc \wedge \zeta_n) = 1$, then
\begin{equation*}
 g(\eta, J(\zeta_1 \wedge \dotsc \wedge \zeta_n))=1.
\end{equation*}
Hence $\eta=J(\zeta_1 \wedge \dotsc \wedge \zeta_n)$.
\end{proof}

\begin{proof}(of proposition)
We can look at the integrand pointwise.  Let $\{e_0, \dotsc, e_n\}$ be an orthonormal frame in $T_xX$.  We consider the Hadamard inequality referenced in (\cite{IwaniecMartin01}, 9.9)
\begin{equation*}
 \frac{1}{(n+1)^{\frac{n+1}{2}}}\lvert du \rvert^{n+1} \geq \lvert du \wedge \dotsc \wedge du \rvert.
\end{equation*}
Equality occurs if and only if $du$ is a scalar multiple of an isometry.
Since the dimension of $TX$ is $n+1$, the latter is merely
\begin{equation*}
 \lvert du(e_0) \wedge \dotsc \wedge du(e_n) \rvert.
\end{equation*}
Now since $\omega$ is a calibration,
\begin{equation*}
 u^*\omega(e_0 \wedge \dotsc \wedge e_n) = \omega(du(e_0)\wedge \dotsc \wedge du(e_n)) \leq \lvert du(e_0) \wedge \dotsc \wedge du(e_n) \rvert,
\end{equation*}
which proves the non-negativity of the integrand
\begin{equation*}
 \frac{1}{(n+1)^{\frac{n+1}{2}}}\lvert du \rvert^{n+1} - u^*\omega(e_0 \wedge \dotsc \wedge e_n).
\end{equation*}
If the integrand vanishes we have
\begin{equation*}
 u^*\omega(e_0\wedge \dotsc \wedge e_n) = \lvert du(e_0) \wedge \dotsc \wedge du(e_n) \rvert = \frac{1}{(n+1)^{\frac{n+1}{2}}}\lvert du \rvert^{n+1}.
\end{equation*}
The equality implies $du = \lambda O$ for some scalar $\lambda$ and isometry $O$.  It is clear that $\lambda = \frac{1}{(n+1)^{\frac12}}\lvert du \rvert$.  Then,
\begin{equation*}
 \omega(du(e_0) \wedge \dotsc \wedge du(e_n)) = \lambda^{n+1} \omega(O(e_0) \wedge \dotsc \wedge O(e_n)) = \lambda^{n+1}.
\end{equation*}
The lemma implies
\begin{equation*}
 O(e_0) = J(O(e_1)\wedge \dotsc \wedge O(e_n))
\end{equation*}
along with cyclic permutations.  Multiplying both sides by $\lambda^{n}$ we get,
\begin{equation*}
 \frac{1}{(n+1)^{\frac{n-1}{2}}}\lvert du \rvert^{n-1} du(e_0) = J(du(e_1)\wedge \dotsc \wedge du(e_n)),
\end{equation*}
along with cyclic permutations.  Hence, $\eth u = 0$.

\end{proof}
The results of these calculations are summarized as follows:
\begin{Thm}{$(n+1)$-Energy-Identity}\label{EI}
 A pointwise orientation non-reversing map $u\colon X \rightarrow M$ satisfies the energy identity
\begin{equation}
 E_{n+1}(u) = \frac{(n+1)^{\frac{n-3}{2}}}{2}\int_X g \otimes g^*_X(du,\eth u)\dVol_X + \int_X u^*\omega,
\end{equation}
in which all of the terms are positive.  The map $u$ is multiholomorphic if and only if it satisfies the energy identity
\begin{equation*}
 E_{n+1}(u) = \int_X u^*\omega.
\end{equation*}
\end{Thm}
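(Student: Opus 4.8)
The plan is to obtain this statement as an essentially immediate corollary of Proposition~\ref{ImpProp1} and the mixed energy identity, so the work is one of assembling and sign-checking the pieces already in hand rather than of new computation. First I would read off the displayed identity
\[
 E_{n+1}(u) = \tfrac{(n+1)^{(n-3)/2}}{2}\int_X g\otimes g_X^*(du,\eth u)\,\dVol_X + \int_X u^*\omega
\]
by simply transposing $\int_X u^*\omega$ in the difference formula of Proposition~\ref{ImpProp1}; this is a purely algebraic rearrangement and requires no argument.

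Next I would verify that each of the three terms is non-negative. The left-hand side $E_{n+1}(u)$ is manifestly non-negative, being the integral of $\lvert du\rvert^{n+1}$ against a positive constant. For $\int_X u^*\omega$ I would argue pointwise: in an oriented orthonormal frame $\{e_i\}$ of $T_xX$ one has $u^*\omega(e_0\wedge\dots\wedge e_n)=\omega(du(e_0)\wedge\dots\wedge du(e_n))$, and since $\omega$ is a calibration of comass one this quantity is bounded in absolute value by $\lvert du(e_0)\wedge\dots\wedge du(e_n)\rvert$; the orientation-non-reversing hypothesis is exactly what guarantees the correct sign, so that $u^*\omega\geq 0$ as a density and hence $\int_X u^*\omega\geq 0$. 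The middle term equals $E_{n+1}(u)-\int_X u^*\omega$ by the identity just derived, and Proposition~\ref{ImpProp1} already shows this difference to be non-negative.

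Finally I would establish the equivalence, whose real content is the \emph{pointwise} non-negativity of the integrand $\tfrac{1}{(n+1)^{(n+1)/2}}\lvert du\rvert^{n+1}-u^*\omega(e_0\wedge\dots\wedge e_n)$ that was extracted inside the proof of Proposition~\ref{ImpProp1}. If $\eth u=0$, that proposition gives that the difference vanishes, whence $E_{n+1}(u)=\int_X u^*\omega$. Conversely, if $E_{n+1}(u)=\int_X u^*\omega$ then the integral of a non-negative integrand is zero, which for continuous $du$ forces the integrand to vanish at every point. This pointwise vanishing is precisely equality in the relevant Hadamard inequality together with equality in the calibration bound: the former forces $du$ to be a scalar multiple of an isometry, and the latter, through Lemma~\ref{ImpLemma}, forces $du(e_0)=J(du(e_1)\wedge\dots\wedge du(e_n))$ and its cyclic analogues, which is exactly the statement $\eth u=0$.

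The only genuinely delicate step is this last equality analysis, and it has already been carried out within Proposition~\ref{ImpProp1}; the point to keep an eye on is the passage from vanishing of the integral to pointwise vanishing, which relies on continuity of $du$ and on the sign pinned down by the orientation-non-reversing hypothesis. With those in place the theorem is a clean restatement of results already proved, so I do not expect any obstacle beyond careful bookkeeping of the numerical constants.
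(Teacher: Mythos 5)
Your proposal is correct and matches the paper's treatment: the theorem is stated there with no separate proof, being explicitly a summary of Proposition~\ref{ImpProp1} and the pointwise equality analysis (Hadamard inequality plus Lemma~\ref{ImpLemma}) carried out inside its proof, which is exactly how you assemble it. Your added care about passing from vanishing of the integral to pointwise vanishing of the non-negative integrand is a sensible bit of bookkeeping that the paper leaves implicit.
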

Hence, we have justified the stance taken in the rest of this paper which is to regard $E_{n+1}$ as \emph{the} relevant notion of energy.  The energy identity immediately implies,
\begin{Cor}
 A pointwise orientation non-reversing map between manifolds with $n$-triads is minimal for the $(n+1)$-energy in its homotopy class if it is multiholomorphic.
\end{Cor}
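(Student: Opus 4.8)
The plan is to combine the $(n+1)$-energy identity of Theorem \ref{EI} with the homotopy invariance of the topological term $\int_X u^*\omega$. The identity already separates $E_{n+1}$ into a non-negative piece measuring $\eth u$ and the term $\int_X u^*\omega$; for a multiholomorphic $u$ the first piece vanishes, pinning $E_{n+1}(u)$ to $\int_X u^*\omega$, whereas for any competitor the same identity can only add a non-negative quantity to its own topological term. Since that topological term is constant across the homotopy class, minimality follows.

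First I would record that Theorem \ref{EI} gives, for the multiholomorphic map $u$, the exact equality $E_{n+1}(u) = \int_X u^*\omega$. Next I would prove that $\int_X u^*\omega$ depends only on the homotopy class of $u$. This is the step where the $n$-plectic hypothesis $d\omega = 0$ is indispensable: for a smooth homotopy $H \colon X \times [0,1] \to M$ from $u$ to a competitor $v$, Stokes' theorem on the closed manifold $X$ yields
\begin{equation*}
 \int_X v^*\omega - \int_X u^*\omega = \int_{X \times [0,1]} H^* d\omega = 0,
\end{equation*}
so that $\int_X v^*\omega = \int_X u^*\omega$ for every $v$ homotopic to $u$.

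The third step is the universal lower bound $\int_X v^*\omega \leq E_{n+1}(v)$. Fixing an oriented orthonormal frame $\{e_i\}$ of $T_xX$ and writing $v^*\omega = f\,\dVol_X$, the comass-one (calibration) property of $\omega$ together with the Hadamard inequality already used in the proof of Proposition \ref{ImpProp1} gives the pointwise estimate
\begin{equation*}
 f = \omega(dv(e_0) \wedge \dotsc \wedge dv(e_n)) \leq \lvert dv(e_0) \wedge \dotsc \wedge dv(e_n)\rvert \leq \frac{1}{(n+1)^{\frac{n+1}{2}}}\lvert dv \rvert^{n+1},
\end{equation*}
and integrating over $X$ yields the claim; for an orientation non-reversing competitor this is precisely the non-negativity asserted in Proposition \ref{ImpProp1}. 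Chaining the three facts,
\begin{equation*}
 E_{n+1}(u) = \int_X u^*\omega = \int_X v^*\omega \leq E_{n+1}(v),
\end{equation*}
establishes the asserted minimality.

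I expect the only genuine obstacle to be regularity bookkeeping rather than any conceptual difficulty. The pointwise calibration estimate and the Stokes argument need $du$ and $dv$ to exist and be integrable, and since $E_{n+1}$ controls only the $W^{1,n+1}$-norm one must ensure that both $\int_X v^*\omega$ and the homotopy class of $v$ are well defined and continuous on the natural competitor class. For smooth (or $C^1$) competitors this is immediate; in the Sobolev setting I would invoke density of smooth maps in $W^{1,n+1}$ together with continuity of $v \mapsto \int_X v^*\omega$ to transfer the bound, noting that the comass form of the estimate removes any need for the competitor itself to be orientation non-reversing.
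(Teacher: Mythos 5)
Your proposal is correct and follows essentially the same route the paper intends: the corollary is stated as an immediate consequence of the $(n+1)$-energy identity (Theorem \ref{EI}), and your three steps --- the identity $E_{n+1}(u)=\int_X u^*\omega$ for multiholomorphic $u$, homotopy invariance of $\int_X u^*\omega$ via Stokes using $d\omega=0$, and the calibration/Hadamard lower bound $\int_X v^*\omega \leq E_{n+1}(v)$ from Proposition \ref{ImpProp1} --- are exactly the details the paper leaves implicit. Your observation that the comass form of the lower bound applies to arbitrary competitors is a correct and worthwhile refinement.
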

In the next section we investigate the converse relation to that of the corollary: what constraints does being $E_{n+1}$-minimal place on $u$?  As is the case with harmonic maps of Riemann surfaces into K\"ahler manifolds, in general there \emph{do} exist harmonic maps which are not holomorphic or anti-holomorphic.  
\end{section}

\begin{section}{Variational Aspects}\label{SubSecVariation}
Now, we consider mappings $u \colon X \rightarrow M$ which are in the Sobolev space $W_{loc}^{1,n+1}$, and investigate the Euler-Lagrange equations for the critical points of the $(n+1)$-energy functional
\begin{equation*}
 E(u) = \frac{1}{(n+1)^{\frac{n+1}{2}}}\int_X \lvert du \rvert^{n+1}\dVol_X.
\end{equation*}
$W^{1,n+1}(X,M)$ is defined by using the metrics on $X$ and $M$, and would not be well-defined in their absence due to the degree of integrability being equal to the dimension of the domain.

We work out this calculation in the most general situation after giving a definition of the $p$-Laplacian.  Let $u$ denote a sufficiently differentiable map 
\begin{equation*}
u\colon (X,g_X,(j,k),\dVol_X) \rightarrow (M,g,(J,K),\omega).
\end{equation*}
Because there are metrics on $\Lambda^kT^*X$ and $u^*TM$, there is a metric on the tensor product, $\Lambda^kg^*_X \otimes g$, which we  denote by $\langle ,\rangle$.  Let $\nabla^X$ be the Levi-Civita connection on $X$ and $\nabla^M$ the Levi-Civita connection on $M$.  Let $\nabla^{X*}$ be the dual connection on $T^*X$, that is,
\begin{equation*}
 \nabla^{X*} := g_X\nabla^Xg_X^{-1}.
\end{equation*}
The tensor product connection given by 
\begin{equation*}
\nabla := \nabla^{X*} \otimes I + I \otimes u^*\nabla^M
\end{equation*}
is compatible with $\langle ,\rangle $ in the sense,
\begin{equation*}
 d\langle A,B\rangle  = \langle (\nabla^{X*} \otimes I + I \otimes u^*\nabla^M)A,B\rangle  +  \langle A,(\nabla^{X*} \otimes I + I \otimes u^*\nabla^M)B\rangle. 
\end{equation*}
And so,
\begin{equation*}
 d(\langle du,du\rangle )^{\frac{p}{2}} = p\langle du,du\rangle ^\frac{p-2}{2}\langle \nabla du,du\rangle = p \lvert du \rvert^{p-2}\langle \nabla du, du\rangle. 
\end{equation*} 

\begin{Defn}Given the setup in the previous calculations, with $\nabla^*$ denoting the formal adjoint to the connection $\nabla \colon \Omega^0(u^*TM) \rightarrow \Omega^1(u^*TM)$ with respect to the Hilbert--Schmidt metric, the operator 
\begin{equation*}
 \Delta_p u := \nabla^*(\lvert du \rvert^{p-2}du)
\end{equation*}
is called the \altemph{$p$-Laplacian}.
\end{Defn}

\begin{Thm}
Suppose $u\colon X \rightarrow M$ is a multiholomorphic map with all the usual notation.  Then, $u$ is a solution to an $(n+1)$-Poisson equation, $\Delta_{n+1}u = \eta$ with inhomogeneity depending on the torsion of $J$, 
\begin{equation*}
 \eta := \frac{(n+1)^{\frac{n-1}{2}}}{n}\nabla(J) \circ \Lambda^n du \circ k.
\end{equation*}
\end{Thm}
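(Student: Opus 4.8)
The plan is to start from the definition $\Delta_{n+1}u = \nabla^*(\lvert du \rvert^{n-1} du)$ and insert the hypothesis $\eth u = 0$, which by the definition of the multi-Cauchy--Riemann operator is precisely the identity
\[
 \lvert du \rvert^{n-1} du = (n+1)^{\frac{n-1}{2}}\,\frac{(-1)^n}{n}\, J \circ \Lambda^n du \circ k .
\]
The entire problem then reduces to computing the codifferential $\nabla^*$ of the $u^*TM$-valued $1$-form $J \circ \Lambda^n du \circ k$, and the claim amounts to saying that when $\nabla^*$ is distributed over this composite, only the term in which the derivative falls on $J$ survives.

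First I would expand $\nabla^*(J \circ \Lambda^n du \circ k)$ by the tensor Leibniz rule. Writing $\beta := \Lambda^n du \circ k \in \Omega^1(X,\Lambda^n u^*TM)$ and treating $J$ as the pullback $u^*J$ of the target homomorphism, the identity $\nabla(J\beta) = (\nabla J)\beta + J(\nabla\beta)$ contracts against $g_X$ to give $\nabla^*(J\beta) = J(\nabla^*\beta) - \trace_{g_X}\big((\nabla J)\beta\big)$, where $\nabla J$ is the pulled-back covariant derivative $u^*(\nabla^M J)$. Up to the overall constant $(n+1)^{\frac{n-1}{2}}\frac{(-1)^n}{n}$, the second term reproduces the claimed inhomogeneity $\eta = \frac{(n+1)^{\frac{n-1}{2}}}{n}\nabla(J)\circ\Lambda^n du\circ k$, so the theorem follows once I establish the intrinsic Piola identity $\nabla^*(\Lambda^n du \circ k) = 0$.

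That Piola identity is the heart of the argument and the step I expect to be the main obstacle. My approach is to recognize $\Lambda^n du \circ k$ as, up to a sign, the Hodge dual $\star(\Lambda^n du)$ of the $\Lambda^n u^*TM$-valued $n$-form $\Lambda^n du \in \Omega^n(X,\Lambda^n u^*TM)$; this uses that $k = (-1)^n\star$ on the conformal-triad domain $X$ (Example \ref{ConfTriads}). Writing the bundle-valued codifferential as $\nabla^* = \pm\,\star\, d^\nabla \star$, we get $\nabla^*(\Lambda^n du \circ k) = \pm\,\star\, d^\nabla(\Lambda^n du)$, so it remains to prove $d^\nabla(\Lambda^n du) = 0$. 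For this I would use $\Lambda^n du = n\,du \wedge \dotsb \wedge du$ ($n$ factors, wedging simultaneously in the form and bundle slots) together with the fundamental symmetry $d^\nabla du = 0$, which holds because the pullback connection and both Levi-Civita connections are torsion-free (the Hessian of $u$ is symmetric). The graded Leibniz rule for $d^\nabla$ then places a single vanishing factor $d^\nabla du = 0$ on each summand, forcing $d^\nabla(\Lambda^n du) = 0$. Crucially, no curvature term enters: $d^\nabla du = 0$ needs only torsion-freeness rather than flatness, and $\star\star = \pm I$ introduces none.

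Finally I would assemble signs and constants. Substituting $\nabla^*(\Lambda^n du \circ k) = 0$ leaves $\nabla^*(J \circ \Lambda^n du \circ k) = -\trace_{g_X}\big((\nabla J)\circ\Lambda^n du \circ k\big)$, and multiplying by the factor carried over from the MCRE substitution yields $\Delta_{n+1}u = \eta$. The bookkeeping of the signs in the codifferential formula and in the Hodge-star identification is routine but must be handled carefully; the only genuine content is the vanishing $d^\nabla du = 0$ and its propagation to $\Lambda^n du$.
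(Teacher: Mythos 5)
Your proposal is correct and follows essentially the same route as the paper: substitute the MCRE into $\Delta_{n+1}u=\nabla^*(\lvert du\rvert^{n-1}du)$, apply the Leibniz rule to $\nabla^*(J\circ\Lambda^n du\circ k)$ so that only the $\nabla J$ term survives, and establish the key vanishing $\nabla(\Lambda^n du)=0$ (your $d^\nabla(\Lambda^n du)=0$ after the identification $\nabla^*=(-1)^n\star\nabla\star$) from the symmetry $\nabla^M_i\partial_j u=\nabla^M_j\partial_i u$ together with the antisymmetry of the exterior product. The only cosmetic difference is that the paper first phrases the conclusion weakly against test sections before carrying out the same pointwise computation.
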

\begin{proof}
We use the same notation as the previous calculation.  For any smooth $0$-form $\phi$ valued in $u^*TM$ we have
\begin{equation*}
 0 = \int_X \langle \nabla \phi, \eth u\rangle \dVol_X = \int_X \langle \phi, \nabla^* \eth u\rangle \dVol_X.
\end{equation*}
So $u$ weakly satisfies the second-order equation
\begin{equation*}
 (\nabla^* \circ \eth u) = \frac{1}{(n+1)^{\frac{n-1}{2}}}\nabla^*(\lvert du \rvert^{n-1}du)- \frac{(-1)^n}{n}\nabla^*(J \circ \Lambda^n du \circ k)= 0. 
\end{equation*}
It now suffices to show that the second part contributes to the inhomogeneity.  The first part is proportional to the $(n+1)$-Laplacian of $u$, 
\begin{equation*}
 \Delta_{n+1}u = \nabla^*(\lvert du \rvert^{n-1} du).
\end{equation*}
Since $\nabla^*= (-1)^n\star \nabla \star = (-1)^n (\circ j) \nabla (\circ k)$,
\begin{align*}
 \nabla^*(J\circ \Lambda^n du \circ k) =& \nabla(J \circ \Lambda^n du)\circ k,\\
  =& \nabla(J) \circ \Lambda^n du \circ k + J \circ \nabla(\Lambda^n du) \circ k\\ =& \nabla(J) \circ \Lambda^n du \circ k.  
\end{align*}
The last equality follows from the fact that $\nabla(\Lambda^n du)=0$.  Indeed, in local normal coordinates on $X$ centered at some point $z$, because $\nabla^M$ is torsion-free, and the metric is diagonal at $z$, and the Christoffel symbols of $g_X$ vanish there,
\begin{equation*}
 \nabla^M_i \partial_ju = \nabla^M_j \partial_i u, \,\,\, \text{and} \,\,\, \nabla du = (I \otimes \nabla^M)(du).  
\end{equation*}
And so, by the combinatorics of alternating products,
\begin{align*}
 &\sum_i \nabla^M_i J(\partial_{i+1} u \wedge \dotsc \wedge \partial_{i-1} u) \cdot dx^i \wedge dx^{i+1}\wedge \dotsc \wedge dx^{i-1} \\
 &= \sum_{i} \sum_{j \neq i} J(\partial_{i+1} u \wedge \dotsc \wedge \nabla^M_i \partial_j u \wedge \dotsc \wedge \partial_{i-1}u) dx^i \wedge \dotsc \wedge dx^{i-1} = 0.
\end{align*}
This computation applies at the origin point for the normal coordinates on $X$.  However, since this is a tensorial equation, the identity $\nabla(\Lambda^n du)=0$ is true globally.
\end{proof}
Most of the time in this paper we assume the vector cross product $J$ to be torsion-free, and hence a multiholomorphic map is $(n+1)$-harmonic.  The next proposition is a standard fact.
\begin{Prop}
On $\mathcal{B} := W^{1,n+1}(X,M)$ the $(n+1)$-harmonic maps extremize the functional $E\colon \mathcal{B} \rightarrow \R$.
\end{Prop}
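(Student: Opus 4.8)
The plan is to carry out the standard first-variation computation for the $(n+1)$-energy and identify the vanishing of the derivative with the weak $(n+1)$-Laplace equation. Fix $u \in \mathcal{B}$ and a variation field $V \in \Omega^0(u^*TM)$, realized as $V = \frac{d}{dt}\big|_{t=0} u_t$ for a smooth family $u_t$ with $u_0 = u$ (for instance $u_t := \exp_u(tV)$ using the exponential map of $g$). Since $X$ is compact and closed there are no boundary contributions, so the only tasks are to differentiate $E(u_t)$ under the integral and to integrate by parts once against the formal adjoint $\nabla^*$.

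Differentiating, and using the pointwise identity $d(\langle du,du\rangle)^{p/2} = p\lvert du\rvert^{p-2}\langle \nabla du, du\rangle$ already established above with $p = n+1$, I would obtain
\[
 \frac{d}{dt}\Big|_{t=0} E(u_t) = \frac{1}{(n+1)^{\frac{n-1}{2}}}\int_X \lvert du\rvert^{n-1}\langle \nabla V, du\rangle\, \dVol_X,
\]
where the crucial step is the interchange $\frac{D}{dt} du_t \big|_{t=0} = \nabla V$; this commutation of the $t$-derivative with the spatial covariant derivative is exactly where torsion-freeness of $\nabla^M$ (hence of the induced tensor connection $\nabla$) is invoked. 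Integrating by parts then yields
\[
 \frac{d}{dt}\Big|_{t=0} E(u_t) = \frac{1}{(n+1)^{\frac{n-1}{2}}}\int_X \langle V, \nabla^*(\lvert du\rvert^{n-1} du)\rangle\, \dVol_X,
\]
and since $\nabla^*(\lvert du\rvert^{n-1}du) = \Delta_{n+1} u$ by definition, the fundamental lemma of the calculus of variations forces $u$ to be critical for $E$ precisely when $\Delta_{n+1} u = 0$ weakly, i.e.\ precisely when $u$ is $(n+1)$-harmonic. This gives both implications of the proposition at once.

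The main obstacle is not the formal computation, which is the classical derivation of the $p$-harmonic map system, but its justification in the Sobolev class $\mathcal{B} = W^{1,n+1}(X,M)$: one must verify that $E$ is G\^ateaux differentiable there and that the integration by parts is legitimate in the weak sense, the honest output being the identity $\int_X \langle \nabla V, \lvert du\rvert^{n-1} du\rangle\, \dVol_X = 0$ for all admissible $V$. Here the choice $p = n+1 \geq 2$ (equivalently $n \geq 1$) is what makes the argument go through cleanly: the integrand $\lvert du\rvert^{n+1}$ is $C^1$ as a function of $du$ with derivative controlled by $\lvert du\rvert^{n} \in L^{\frac{n+1}{n}}$, so the difference quotients converge by dominated convergence and no degeneracy arises at points where $du = 0$. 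With this regularity in hand the equivalence between critical points of $E$ and weak solutions of $\Delta_{n+1}u = 0$ is exactly the assertion of the proposition.
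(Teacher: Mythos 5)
Your proposal is correct and follows essentially the same route as the paper: a first-variation computation differentiating $E(u_t)$ under the integral, commuting the $t$-derivative with $du$ to get $\langle \nabla V, \lvert du\rvert^{n-1}du\rangle$, and identifying the vanishing of this pairing for all $V$ with the weak equation $\Delta_{n+1}u=0$. The only difference is that you are more careful than the paper about justifying the differentiation and the weak formulation in $W^{1,n+1}$, which is a point in your favor but not a different argument.
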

\begin{proof}
 Let $\gamma \colon \R \rightarrow \mathcal{B}$ be a path $\lambda \mapsto u_\lambda$ such that $\gamma(0) = u$, and $\dot{\gamma}(0) = \zeta$.  We have,
\begin{align*}
 dE(u)\zeta =& \frac{d}{d\lambda}\left[E(u_\lambda)\right]\rvert_{0} = \int_X \frac{d}{d\lambda}\left[\langle du_\lambda,du_\lambda\rangle ^{\frac{n+1}{2}}\dVol_X\right]\rvert_0 \dVol_X= \\
 =& (n+1)\int_X\lvert du \rvert^{n-1} \langle \nabla_\lambda du_\lambda,du_\lambda\rangle \rvert_0 \dVol_X\\
 =& (n+1)\int_X \langle \nabla \zeta,\lvert du \rvert^{n-1}du\rangle \dVol_X 
\end{align*}
This differential is zero for all $\zeta$ if and only if $\Delta_{n+1}u = 0.$
\end{proof}

In $\R^{n+1}$ it has been shown in the work of Uhlenbeck \cite{Uhlenbeck77} and Evans \cite{Evans82}, and related work in \cite{Tolksdorf84}, that \emph{a priori} a solution to the $(n+1)$-Laplace equation on real functions (with constant coefficients) on $\R^{n+1}$ must have $C^{1,\alpha}$-regularity.  In principle this regularity is the best one can do as a result of the degeneracy of $\Delta_p$.  In the general Riemannian setting the estimates of Hardt and Lin \cite{HardtLin87}, cf. White \cite{White88} for minimizers of the $(n+1)$-energy suffice to prove $C^{1,\alpha}$ interior regularity, and smooth regularity off of a Hausdorff codimension-$2$ locus.  However it is the hope that with some \emph{a priori} control on the critical locus of $u$ this situation improves.  Along these lines one might want to consider the \emph{a priori} ``size'' of the zero locus of $du$ --the topic of section \ref{SubSecCrit}.

To finish off the section, we mention an important theorem pertaining to minimizers of the $p$-energy.
Wei has proven an existence theorem for $p$-energy minimizers,
\begin{Thm}{(\cite{Wei94})} \label{Existence}
 Let $u \colon X \rightarrow M$ be a continuous map of finite $p$-energy.  Assume $M$ has non-positive sectional curvature.  Then, $u$ is homotopic to a $C^{1,\alpha}$ $p$-minimal map.  
\end{Thm}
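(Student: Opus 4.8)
The plan is to argue by the direct method of the calculus of variations inside the fixed homotopy class of $u$, and then to read off the regularity from the results already quoted in this section. Write $[u]$ for the homotopy class of the continuous finite-energy map $u$, and set
\begin{equation*}
 m := \inf\{\, E_p(v) : v \in W^{1,p}(X,M),\ v \simeq u \,\},
\end{equation*}
a non-negative number, finite because $u$ is itself an admissible competitor. Composing with an isometric embedding $M \hookrightarrow \R^N$ so as to work in a linear Sobolev space, I would choose a minimizing sequence $\{u_k\}$ with $E_p(u_k) \to m$. The uniform energy bound yields a uniform $W^{1,p}$ bound, so after passing to a subsequence there is weak convergence $u_k \rightharpoonup u_\infty$ in $W^{1,p}$ and strong convergence in $L^p$.

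That $u_\infty$ is a candidate minimizer is then formal: the integrand $\xi \mapsto \lvert \xi \rvert^p$ is convex in the gradient slot, so $E_p$ is weakly lower semicontinuous and
\begin{equation*}
 E_p(u_\infty) \leq \liminf_{k\to\infty} E_p(u_k) = m.
\end{equation*}
The reverse inequality holds as soon as one knows $u_\infty \simeq u$, since then $u_\infty$ is admissible. Thus the whole theorem reduces to showing that the homotopy class survives the weak limit --- equivalently, that no energy is lost in passing to $u_\infty$.

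The hard part is exactly this preservation of homotopy type, and it is delicate precisely because in the case of interest the exponent $p = n+1$ equals $\dim X$, so that $E_p$ is conformally invariant and $W^{1,p}$ fails to embed into $C^0$. In this critical regime energy can concentrate at points, and a standard blow-up analysis at a concentration point produces, after rescaling, a nonconstant finite-energy $p$-harmonic map on $\R^{n+1}$ which, by removability of the point singularity, descends to a nonconstant $p$-harmonic ``bubble'' $S^{n+1} \to M$. This is where the hypothesis of non-positive sectional curvature is decisive: a Bochner--Eells--Sampson vanishing argument shows that every $p$-harmonic map from the positively curved sphere $S^{n+1}$ into a manifold of curvature $\leq 0$ is constant, so no bubble can form. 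Consequently energy cannot concentrate, the convergence $u_k \to u_\infty$ improves to strong convergence in $W^{1,p}$, and the homotopy class is preserved. Alternatively one may run the $p$-harmonic map heat flow started at $u$; non-positive curvature again rules out finite-time blow-up and forces subconvergence to a $p$-harmonic limit within $[u]$, the homotopy class being preserved by continuity of the flow in time.

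It remains only to record the regularity, which is automatic once $u_\infty$ is known to be an energy minimizer: the interior estimates of Hardt and Lin \cite{HardtLin87}, cf. White \cite{White88}, quoted above for minimizers of the $(n+1)$-energy, give that $u_\infty$ is $C^{1,\alpha}$. Hence $u_\infty$ is the desired $C^{1,\alpha}$ $p$-minimal map homotopic to $u$, completing the argument.
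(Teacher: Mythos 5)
The paper does not prove this statement at all: it is imported verbatim from Wei \cite{Wei94} as a black box, so there is no internal proof to compare yours against. Judged on its own terms, your outline has the right skeleton: the direct method in the homotopy class, weak lower semicontinuity of the convex integrand $\xi \mapsto \lvert \xi \rvert^p$ after embedding $M$ isometrically in $\R^N$, and $C^{1,\alpha}$ regularity of minimizers from Hardt--Lin \cite{HardtLin87}. You also correctly isolate the survival of the homotopy class under the weak limit as the only genuine difficulty, and correctly note that it is critical because $p = n+1 = \dim X$.

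The way you close that gap, however, does not work as written. A blow-up analysis producing a $p$-harmonic bubble requires the concentrating sequence to consist of (almost) critical points of the functional; a bare minimizing sequence $\{u_k\}$ satisfies no equation, and energy concentration along it need not generate any $p$-harmonic map on $\R^{n+1}$. To run a bubbling argument you would first have to replace $\{u_k\}$ by minimizers of perturbed problems or by a Palais--Smale sequence, and bubbling theory at the conformally invariant exponent $p=\dim X$ is itself a substantial undertaking; likewise the $p$-harmonic heat flow you offer as an alternative is, for $p\neq 2$, a degenerate parabolic system whose global existence and convergence are hard theorems, not a one-line fallback. The standard route --- and, in substance, Wei's, building on White \cite{White88} --- uses non-positive curvature differently: by Cartan--Hadamard the target is aspherical, so the homotopy class of a map into $M$ is determined by its restriction to a skeleton of $X$ of dimension at most $2$, and White's results show that exactly this low-dimensional skeletal homotopy type is well defined on $W^{1,p}$ for $p \geq 2$ and is preserved under weak convergence of a minimizing sequence. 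It is asphericity of the target, not a Bochner vanishing theorem for bubbles on $S^{n+1}$, that rescues the homotopy class; your Bochner claim for $p$-harmonic maps is in any case more delicate than you suggest because of the degeneracy of the $p$-Laplacian at zeros of $du$, even though the constancy of any would-be bubble into an aspherical nonpositively curved target is true (lift to the universal cover and compose with the convex function $d^2(\cdot,q)$).
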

This result would give an existence theorem for (anti-)multiholomorphic maps whenever it is the case that all $p$-energy minimizers are (anti-)multiholomorphic.  Even in the harmonic situation this is rarely the case.


\end{section}
\begin{section}{Endomorphisms and Quasiregular Mappings} \label{SubSecEndo}
The multi-Cauchy--Riemann equations in Euclidean space can be seen to coincide with a system of equations called the Cauchy--Riemann system.  Let $\Omega$ be an open domain in $\R^k$, $f\colon \Omega \rightarrow \R^k$ a $W_{loc}^{1,k}$ map, and $\lvert J(f) \rvert $ the Jacobian determinant of $f$.  
\begin{Defn}
 The \altemph{Cauchy--Riemann (CR)} system in $\R^k$ is the system of equations
\begin{equation}\label{CRsys}
 Df^t \circ Df = \lvert J(f) \rvert^{\frac{2}{k}}\cdot I, \,\,\, \mathrm{a.e.}
\end{equation}
\end{Defn}
This system is an overdetermined (if $k \geq 3$), first-order, fully non-linear system.  It can be extended to the Riemannian setup as follows.
\begin{Defn}
 Let, $(M,g_M,\dVol_M,(-1)^n\star), (N,g_N,\dVol_N,(-1)^n\star)$ be $(n+1)$-dimensional, oriented, Riemannian manifolds with conformal triads, and $u$ a map between them.  Denote by $\Vol_M$ the oriented, unit, field of $(n+1)$-vectors on $M$ dual to $\dVol_M$.  The \altemph{Cauchy--Riemann system} is the equation
\begin{equation}
 u^*g_N = \left[u^*\dVol_N(\Vol_M)\right]^{\frac{2}{n+1}} \cdot g_M. 
\end{equation}
\end{Defn}

The motivating theorem in this field is the classical Liouville theorem for $C^3$ functions \citep{IwaniecMartin01}{2.3.1}
\begin{Thm}{(Liouville)}
 Let $\Omega \subsetneq \R^n$ be an open domain.  Every solution $f \in C^{3}(\Omega,\R^n), n \geq 3,$ to the CR-system eqn. (\ref{CRsys}), where $\lvert J(f) \rvert$ does not change sign in $\Omega$ is of the form
\begin{equation*}
 f(x) = b + \frac{\alpha A(x-a)}{\lvert x-a \rvert^\epsilon},
\end{equation*}
for some $a \in \R^n \setminus \Omega, b \in \R^n, \alpha \in \R, A \in O(n), $ and $\epsilon = 0$ or $2$.
\end{Thm}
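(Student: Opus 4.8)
The plan is to exploit the rigidity that conformality acquires in dimension $n\ge 3$ by passing to a curvature equation for the conformal factor. First I would dispose of the sign ambiguity: since $\lvert J(f)\rvert$ does not change sign, after possibly composing $f$ with a fixed reflection we may assume $J(f)>0$ throughout $\Omega$, so that $f$ is an orientation-preserving local diffeomorphism. Writing the CR-system (\ref{CRsys}) as $Df^tDf=\lambda^2 I$ with $\lambda:=\lvert J(f)\rvert^{1/n}\in C^2(\Omega)$, the map $f$ pulls the flat metric back to the conformally flat metric $f^*\delta=\lambda^2\delta$. Because $f$ is a local diffeomorphism onto an open subset of Euclidean space, this pullback metric is itself flat; this is the geometric content that will drive everything.

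Next I would compute the curvature of $e^{2\rho}\delta$, where $\rho:=\log\lambda$. In dimension $n\ge 3$ the Weyl tensor is a conformal invariant and vanishes for $\delta$, so $e^{2\rho}\delta$ automatically has vanishing Weyl tensor; consequently its Riemann tensor vanishes if and only if its Schouten tensor does. Using the standard conformal transformation law for the Schouten tensor and the fact that the flat Schouten tensor vanishes, flatness reduces to the second-order system
\begin{equation*}
 \nabla^2\rho = d\rho\otimes d\rho - \tfrac12\lvert\nabla\rho\rvert^2\,\delta.
\end{equation*}
This is the heart of the matter, and the exact point where $n\ge3$ is used: for $n=2$ the Schouten tensor carries no information and the conclusion genuinely fails.

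The decisive trick is the substitution $v:=e^{-\rho}=1/\lambda$, which linearizes the system. A direct computation gives $\nabla^2 v = e^{-\rho}\bigl(d\rho\otimes d\rho-\nabla^2\rho\bigr)=\tfrac12 e^{-\rho}\lvert\nabla\rho\rvert^2\,\delta$, so the Hessian of $v$ is a pointwise scalar multiple of the identity. I would then argue purely algebraically: the off-diagonal second derivatives of $v$ vanish and all diagonal second derivatives coincide, whence differentiating forces every third derivative of $v$ to vanish and $v$ to be a quadratic polynomial $v(x)=\tfrac{\gamma}{2}\lvert x\rvert^2+\langle\beta,x\rangle+c$. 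A case split on the leading coefficient then finishes the proof: if $\gamma=0$ then $\lambda$ is constant and integrating $Df=\lambda O$ yields a similarity ($\epsilon=0$), while if $\gamma\neq0$ I complete the square to get $v=\tfrac{\gamma}{2}\lvert x-a\rvert^2$ and recover $\lambda\sim\lvert x-a\rvert^{-2}$, an inversion ($\epsilon=2$); in either case integrating the frame equations reconstructs $f(x)=b+\alpha A(x-a)\lvert x-a\rvert^{-\epsilon}$.

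The main obstacle I anticipate is organizing the curvature computation cleanly and justifying that the $C^3$ hypothesis suffices: two derivatives of $f$ already produce the curvature equation, and once $v$ is shown to satisfy $\nabla^2 v\propto\delta$ its polynomial form and hence the full regularity of $f$ follow automatically, so regularity is not truly a bottleneck. A purely Euclidean alternative that avoids curvature language is to differentiate $\langle\partial_i f,\partial_j f\rangle=\lambda^2\delta_{ij}$ twice and solve for the second derivatives of $f$ by the Christoffel-symbol permutation trick, arriving at the same equation on $\rho$; I would favor the geometric version precisely because it isolates where $n\ge3$ enters.
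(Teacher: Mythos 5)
The paper does not actually prove this statement: it is quoted as classical background from Iwaniec--Martin \cite{IwaniecMartin01} (Theorem 2.3.1 there) in the discussion of quasiregular mappings, so there is no in-paper argument to compare against. Your sketch is essentially the standard geometric proof of the classical Liouville theorem --- pull back the flat metric to get $f^*\delta=\lambda^2\delta$, use conformal invariance and vanishing of the Weyl tensor in dimension $n\ge 3$ to reduce flatness to the vanishing of the Schouten tensor, linearize with $v=1/\lambda$ to obtain $\nabla^2 v=\mu\,\delta$ with $\mu$ constant, and integrate. That route is correct and does establish the theorem at the stated $C^3$ regularity.

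Three points need tightening. First, ``does not change sign'' does not mean ``does not vanish'': composing with a reflection only normalizes $J(f)\ge 0$, and you must still rule out interior zeros of $J(f)$. The standard patch is a continuation argument: on each component $U$ of $\{J(f)>0\}$ your argument shows $f$ is a M\"obius map, whose differential cannot tend to zero at a point of $\partial U\cap\Omega$ (nor can $f$ remain continuous near the pole $a$), so either $U=\Omega$ or $J(f)\equiv 0$; in the latter case $Df^tDf=\lvert J(f)\rvert^{2/n}I$ forces $Df\equiv 0$ and $f$ is constant, i.e.\ $\alpha=0$. Second, in the case split ``$\gamma=0\Rightarrow\lambda$ constant'' you are implicitly using the first-order constraint $\lvert\nabla v\rvert^2=2\mu v$, which comes from identifying the scalar in your own display, $\mu=\tfrac12 e^{-\rho}\lvert\nabla\rho\rvert^2=\lvert\nabla v\rvert^2/(2v)$; make this explicit, since $\gamma=0$ alone only makes $v$ affine. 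The same identity is what forces $\lvert\beta\rvert^2=2\gamma c$ and hence $v=\tfrac{\gamma}{2}\lvert x-a\rvert^2$ exactly in the $\gamma\neq 0$ case, with $a\notin\Omega$ because $v>0$ on $\Omega$. Third, the closing step ``integrating the frame equations'' is most cleanly carried out by composing $f$ with the inversion about $a$, which has conformal factor $\lvert x-a\rvert^{-2}$ and reduces the $\epsilon=2$ case to the constant-factor case already handled, where $Df=\lambda A$ with $\lambda$ and $A$ constant integrates to a similarity. With those repairs the proof is complete.
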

This theorem is a strong rigidity result which shows that the $n=2$ case (in which the space of solutions is infinite dimensional) is exceptional.

This Liouville Theorem was extended to the case of functions of Sobolev class $W_{loc}^{1,n}$ by the labors of Ghering, Reshetnyak, and by different methods, Bojarski-Iwaniec, and Iwaniec-Martin (see \cite{IwaniecMartin01} pg. 85 for these references). 

From a geometric perspective, solutions of equation (\ref{CRsys}) are maps of zero distortion in the sense that their directional derivatives satisfy 
\begin{equation*}
 \text{max}_{\lvert \alpha \rvert = 1} \lvert \partial_\alpha f(x) \rvert  =  \text{min}_{\lvert \alpha \rvert=1}\lvert \partial_\alpha f(x) \rvert, \,\,\, \mathrm{a.e.}.
\end{equation*}
Therefor these solutions are, in particular, maps with bounded distortion.
\begin{Defn}
 A mapping $f \colon \Omega \subset \R^n \rightarrow \Omega^\prime \subset \R^n$ in $W_{loc}^{1,n}$ has \altemph{$Q$-bounded distortion} for some $Q \geq 1$ if its distortion,
\begin{equation*}
 \frac{\text{max}_{\lvert \alpha \rvert=1} \lvert \partial_\alpha f(x) \rvert}{\text{min}_{\lvert \alpha \rvert=1}\lvert \partial_\alpha f(x) \rvert}
\end{equation*}
is finite and bounded by $Q$ almost everywhere.
\end{Defn}
The definition of $Q$-bounded distortion extends to the Riemannian scenario by replacing the Euclidean norms with the metric norms.  

A stronger condition that compares the length of derivative vectors to volumes of tangent-planes is the notion of $Q$-quasiregularity.
\begin{Defn}\label{qrDef}
 A mapping $f \colon \Omega \subset \R^n \rightarrow \Omega^\prime \subset \R^n$ in $W_{loc}^{1,n}\cap C^{0}$ is \altemph{$Q$-quasiregular} for some $\infty > Q \geq 1$ if,
\begin{equation*}
 \frac{1}{n^{\frac{n}{2}}}\lvert df \rvert^{n} \leq Q  J(f), \,\,\, \mathrm{a.e.}
\end{equation*}
\end{Defn}
Again, the definition of $Q$-quasiregular can be immediately extended to the Riemannian setting with $J(f)$ replaced by $f^*\mathrm{dVol_M}(\mathrm{Vol_N})$.  With this notion in hand, it is easy enough to see that a solution to the MCR equation has $1$-bounded distortion (i.e. no distortion) and is $1$-quasiregular.  Consider the following pointwise calculations on $u\colon N \rightarrow M.$  Let $\zeta$, $\eta$ be unit tangent vectors at some point of $N$, and complete these to oriented orthonormal frames $\{ \zeta_0=\zeta,\dotsc,\zeta_n \}$, $\{ \eta_0=\eta,\dotsc,\eta_n\}$, so that $\star \zeta = \zeta_1 \wedge \dotsc \wedge \zeta_n$.  Then,
\begin{align*}
 \lvert du \rvert^{n-1} g_M(du(\zeta),du(\zeta)) &= \frac{(n+1)^{\frac{n-1}{2}}}{n}g_M(du(\zeta),\star \Lambda^n du( \star \zeta)) = (n+1)^{\frac{n-1}{2}}u^*\dVol_M(\zeta_0\wedge \dotsc \wedge \zeta_n)\\
 &= (n+1)^{\frac{n-1}{2}}u^*\dVol_M(\eta_0 \wedge \dotsc \wedge \eta_n) = \lvert du \rvert^{n-1}g_M(du (\eta),du (\eta)).
\end{align*}
Hence the distortion is the same in arbitrary directions $\eta, \zeta$.  The calculation in lemma \ref{AutomLem} establishes $1$-quasiregularity, so we don't repeat it.

\begin{Prop}
 Given $(M,g_M,\dVol_M,(-1)^n\star)$, $(N,g_N,\dVol_N,(-1)^n\star)$, two $(n+1)$-dimensional Riemannian manifolds with conformal triads, a map $u \colon M \rightarrow N$ is a solution of the multi-Cauchy Riemann equations if and only if it is a solution to the Cauchy--Riemann system. 
\end{Prop}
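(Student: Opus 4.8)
The plan is to reduce the asserted equivalence to a pointwise statement, holding almost everywhere since the maps live in $W^{1,n+1}_{\mathrm{loc}}$, and then to recognize both systems as encoding the same linear-algebraic data at each point: that $du$ is a nonnegative scalar multiple of an \emph{orientation-preserving} isometry, with the scalar pinned to $\lvert du\rvert$. First I would dispose of the critical locus. Where $du=0$ both conditions hold trivially, since $\eth u$ vanishes term by term and both sides of the Cauchy--Riemann system vanish because $u^*\dVol_N(\Vol_M)=0$ there. So it suffices to work at a regular point $x$, fix a positively oriented orthonormal frame $\{e_0,\dotsc,e_n\}$ of $T_xM$, and write $\hat{e}_i$ for the associated $n$-vector so that $k(e_i)=(-1)^n\hat{e}_i$, exactly as in the proof of the Mixed Energy Identity (see (\ref{equation1})).

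For the forward direction, MCR $\Rightarrow$ CR, I would pair the multi-Cauchy--Riemann equation with $du(e_i)$ in the target metric. As in Lemma \ref{Ortholem}, the term $g_N\!\left(du(e_i),\,J\circ\Lambda^n du\circ k(e_i)\right)$ collapses to $u^*\dVol_N(\Vol_M)$, a quantity independent of $i$; together with the alternation of $\dVol_N$ this forces the vectors $\{du(e_i)\}$ to be mutually orthogonal of a common length $\rho$, i.e. $u^*g_N=\rho^2 g_M$. Writing $\sigma:=u^*\dVol_N(\Vol_M)$ and using $\lvert du\rvert^2=(n+1)\rho^2$, the scalar relation in $\eth u=0$ reduces to $\rho^{\,n+1}=\sigma$, whence $\rho^2=\sigma^{2/(n+1)}$ and the Cauchy--Riemann system holds. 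In particular $\sigma=\rho^{\,n+1}>0$, so $u$ is orientation non-reversing, consistent with the $1$-quasiregularity of multiholomorphic maps established just before Lemma \ref{AutomLem}.

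For the converse, CR $\Rightarrow$ MCR, the identity $u^*g_N=\sigma^{2/(n+1)}g_M$ says precisely that $\{du(e_i)\}$ is an orthogonal frame of common length $\rho:=\sigma^{1/(n+1)}$, so $du=\rho\,O$ for an isometry $O$. The sign built into $\sigma=u^*\dVol_N(\Vol_M)=\rho^{\,n+1}\det O$ then pins down $\det O=+1$, so that $O$ is orientation-preserving and $\rho=\lvert du\rvert/(n+1)^{1/2}$. Running the reverse computation of Lemma \ref{AutomLem} backward then recovers $\frac{1}{(n+1)^{(n-1)/2}}\lvert du\rvert^{\,n-1}du(e_i)=\frac{(-1)^n}{n}\,J\circ\Lambda^n du\circ k(e_i)$ for each $i$, which is $\eth u=0$.

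The step I expect to be the main obstacle is the orientation/branch issue in this converse: the metric identity constrains only the \emph{unsigned} conformal class of $du$, so a priori an orientation-reversing conformal map could satisfy it, whereas such a map can never be multiholomorphic. Making the iff honest therefore requires reading $[\,\cdot\,]^{2/(n+1)}$ via the real $(n+1)$-st root and tracking the sign of $\sigma$. I would handle this by working inside the sense-preserving ($1$-quasiregular) category that is the natural habitat of both equations, where $\sigma\ge 0$ and hence $\rho=\sigma^{1/(n+1)}\ge 0$ forces $\det O=+1$; this is exactly the parity-sensitive point where care is needed, and once it is settled the two systems coincide at every regular point, and hence almost everywhere.
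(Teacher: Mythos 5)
Your proposal is correct and follows essentially the same route as the paper: the forward direction is the pairing computation of Lemma \ref{Ortholem} specialized to the conformal triad on $N$ (the paper phrases it via the compatibility identity $\dVol_N(\zeta\wedge\star\eta)=g_N(\zeta,\eta)$, which is the same calculation), and the converse goes through the Hadamard equality case, $du=\rho\,O$ with $O$ an isometry, followed by Lemma \ref{ImpLemma} / the computation of Lemma \ref{AutomLem}. The one point where you go beyond the paper is the explicit treatment of the orientation of $O$ and the reading of $[\,\cdot\,]^{2/(n+1)}$; the paper leaves this implicit (its surrounding results are stated for pointwise orientation non-reversing maps), and your flagging of it is a genuine, if minor, improvement in rigor rather than a different method.
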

\begin{proof}
First of all, recall the compatibility conditions on the conformal triads.  In particular,
\begin{equation*}
 \dVol_N(\zeta \wedge \star \eta) = g_N(\zeta,\eta).
\end{equation*}
Given $u$ a mulitholomorphic map,
\begin{align*}
 g_N(du(\zeta),du(\eta)) &= \dVol_N(du(\zeta)\wedge \star du(\eta)) = \frac{(n+1)^{\frac{n-1}{2}}}{n \lvert du \rvert^{n-1}}\dVol_N(du(\zeta)\wedge \Lambda^n du(\star \eta))\\
 &= \frac{(n+1)^{\frac{n-1}{2}}}{\lvert du \rvert^{n-1}}u^*\dVol_N(\zeta \wedge \star \eta) = \frac{(n+1)^{\frac{n-1}{2}}}{\lvert du \rvert^{n-1}}u^*\dVol_N(\Vol_M)g_M(\zeta,\eta).
\end{align*}
Now by applying $g_N \otimes g_M^* (du,\bullet)$ to both sides of the MCR equation (as per the energy identity (thm. \ref{EI})) it is easy to show pointwise 
\begin{equation*}
 \frac{1}{(n+1)^{\frac{n+1}{2}}}\lvert du \rvert^{n+1} = u^*\dVol_N(\Vol_M),
\end{equation*}
so that we get the Cauchy--Riemann system
\begin{equation*}
 u^*g_N = \left[u^*\dVol_N(\Vol_M)\right]^{\frac{2}{n+1}}g_M.
\end{equation*}
The other implication follows by argumentation along the lines of proposition \ref{ImpProp1}.  What follows is a summary: the CR-equation implies 
\begin{equation*}
 \frac{1}{(n+1)^{\frac{n+1}{2}}}\lvert du \rvert^{n+1} = u^*\dVol_N(\Vol_M),
\end{equation*}
which is the extremal case of the Hadamard inequality (see proposition \ref{ImpProp1}).  The extremal case implies that pointwise
\begin{equation*}
du = \frac{1}{(n+1)^{\frac{1}{2}}} O,
\end{equation*}
for some isometry $O$.  And finally by the properties of the vector cross product $J$,
$\eth u = 0$.  (see lemma \ref{ImpLemma})
\end{proof}

\begin{Example}
Any Riemannian covering which is locally a conformal equivalence is a multiholomorphic map.   

\end{Example}
%
%
%

Reshetnyak \cite{Reshetnyak67} established the following strong rigidity result for solutions of the Cauchy--Riemann system in $\R^n$,
\begin{Thm}(Reshetnyak)
 Given $u \in W^{1,n}(\Omega,\R^n)$ satisfying the Euclidean CR-system, then $u$ is discrete and open.  That is, the preimage of a point is a discrete set, and $u$ maps open sets to open sets.   
\end{Thm}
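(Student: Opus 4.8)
The plan is to recognize this as a special case of Reshetnyak's general theory of mappings of bounded distortion, to which the preceding discussion has already connected the CR-system: a $W^{1,n}$ solution of equation (\ref{CRsys}) has distortion identically equal to $1$, hence is a $1$-quasiregular map in the sense of Definition \ref{qrDef}. The first step is to secure the regularity that makes topological arguments meaningful. A map of bounded distortion lying in $W^{1,n}_{loc}$ admits a continuous —indeed locally Hölder— representative, which I would fix once and for all. Combined with the CR-system this yields that the Jacobian satisfies $J(u)\geq 0$ almost everywhere: the hypothesis forbids sign changes, and the pointwise identity $\frac{1}{n^{n/2}}|du|^n = J(u)$ (the equality case of Definition \ref{qrDef}) pins down the nonnegative sign.

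Next I would establish the analytic heart of the matter, namely that a nonconstant $u$ cannot collapse: $J(u)$ vanishes only on a set of measure zero. The tool is the nonlinear potential theory attached to quasiregular maps. Each coordinate function, and more generally each function obtained by composing $u$ with an affine functional, is a weak solution of a degenerate elliptic equation of $n$-Laplacian type; these $\A$-harmonic functions obey a maximum principle and a unique-continuation principle. From these I would deduce that a nonconstant $u$ has $J(u)>0$ almost everywhere and is \emph{sense-preserving}: for small balls $B$ and $y\notin u(\partial B)$ the topological degree $\deg(y,u,B)$ is a positive integer. The change-of-variables identity $\int_B J(u)\,dx = \int_{\R^n}\deg(y,u,B)\,dy$, valid for continuous $W^{1,n}$ maps, is what ties the analytic nonvanishing to the positivity of the degree.

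With sense-preservation and $J(u)>0$ a.e. in hand, discreteness and openness become the topological payoff. For openness I would invoke the Chernavskii--Väisälä theorem: a continuous, sense-preserving, discrete map of a domain in $\R^n$ is open, with branch set of topological dimension at most $n-2$. Discreteness I would obtain by showing the fibers $u^{-1}(y)$ contain no nondegenerate continuum. Were $C\subset u^{-1}(y)$ such a continuum, a modulus-of-curve-families (capacity) estimate —the quantitative expression of quasiregularity— would force the $n$-modulus of the family of paths joining $C$ to a surrounding sphere to be simultaneously zero and positive, a contradiction. The fibers being then totally disconnected, and the local index being a positive integer by sense-preservation, the fibers are discrete, which together with openness completes the conclusion.

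The step I expect to be the genuine obstacle is the almost-everywhere nonvanishing of the Jacobian for nonconstant $u$: this is the deep part of Reshetnyak's theory and rests entirely on the degenerate-elliptic regularity and unique continuation of the associated $\A$-harmonic equations, while the capacity estimate driving discreteness is its topological counterpart; everything after these is comparatively soft. I would also take care not to shortcut the argument through the extended Liouville rigidity quoted earlier, since that rigidity is itself downstream of the present discreteness-and-openness statement, so invoking it here would be circular.
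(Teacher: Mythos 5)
The paper does not actually prove this theorem --- it is quoted from Reshetnyak \cite{Reshetnyak67}, with the underlying strategy only sketched later in Section \ref{SubSecCrit} (the $n$-harmonicity of $-\log|u-y|$ off the fiber $u^{-1}(y)$, a comparison principle, and a conformal-capacity estimate showing the fiber is too small to contain a nondegenerate continuum, following \cite{IwaniecMartin01}, Ch.~16) --- and your outline is exactly that standard argument, so the approaches coincide. One small correction: the Chernavskii--V\"ais\"al\"a theorem concerns the topological dimension of the branch set of a map \emph{already known} to be discrete and open, so it cannot be the source of openness; openness instead follows from the degree-theoretic step you already set up (total disconnectedness of fibers lets you choose balls $B$ with $u(x_0)\notin u(\partial B)$, and positivity of $\deg(y,u,B)$ then forces $u(B)$ to contain a neighborhood of $u(x_0)$), after which discreteness follows from the finiteness and positivity of the local index.
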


These kinds of rigidity theorems are relevant to general multholomorphic mappings because the space of such maps $u\colon X \rightarrow M$ will locally be foliated by orbits of the multiholomorphic automorphism group of $X$, that is, solutions of the afore-mentioned multiholomorphic system on maps $\phi \colon X \rightarrow X$.  

If $X$ is locally conformally flat then the results on maps satisfying the CR-system in $\R^{n+1}$ apply directly to the problem of characterizing multiholomorphic (endo)automorphisms of $X$.  But in general those results would need to be extended to the case of non-vanishing conformal curvature/Weyl tensor on $X$.  To the author's knowledge little in known in generality in this direction, but there are some results dealing with uniformly quasiregular maps.  It is shown, for instance, in \cite{BirdsonHinkkanenMartin} that a closed hyperbolic manifold cannot admit a non-trivial uniformly quasiregular self map at all.  For our purposes such an extension is unnecessary as we will be able to put the Euclidean theorems to good use in the next section.

\end{section}

\begin{section}{Concerning the Critical Locus of a Multiholomorphic Map}\label{SubSecCrit}

As already mentioned in section \ref{SubSecEndo}, Reshetnyak managed to prove that under the relatively weak regularity/integrability assumptions of $u \in W^{1,n+1}(\Omega \subset \R^{n+1},\R^{n+1})$, a multiholomorphic endomorphism of a Euclidean domain (a.k.a. a solution to the CR-system) is open and discrete.  He leveraged the critical fact that the function
\begin{equation*}
 w = -\log \lvert u \rvert
\end{equation*}
turns out to be $(n+1)$-harmonic on the complement of the zero-locus of $u$.  From the comparison principle for such functions, one can estimate (using conformal capacity theory) the size of the zero-locus of $u$, and ultimately conclude that it is discrete (\cite{IwaniecMartin01}, Ch. 16.). 

When $n \geq 2$ there is strong rigidity in the presence of higher regularity.  In the following theorem fractional regularity is understood in terms of the H\"older-continuity in the sense 
\begin{equation*}
 C^p := C^{\lfloor p\rfloor,p-\lfloor p \rfloor}.
\end{equation*}
 
\begin{Thm}{\cite{BonkHeinonen}}
If $n \geq 2$, every non-constant, pointwise orientation-non-reversing solution $f \in C^{\frac{n+1}{n-1}}(\Omega \subset \R^{n+1},\R^{n+1})$ to the Euclidean CR-system (more generally a uniformly quasiregular map) is a local homeomorphism.
\end{Thm}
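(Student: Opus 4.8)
Recognizing that, with ambient dimension $m = n+1 \geq 3$, the stated smoothness $C^{\frac{n+1}{n-1}}$ is exactly the critical Bonk--Heinonen exponent $C^{\frac{m}{m-2}}$, I would attack the two halves of the statement separately. For the CR-system itself (the $1$-quasiregular, zero-distortion case) there is a clean shortcut. Since $\frac{n+1}{n-1} > 1$ for every $n \geq 2$, a solution $f \in C^{\frac{n+1}{n-1}}$ is in particular $C^1$, hence locally Lipschitz and a fortiori of class $W^{1,n+1}_{loc}$. The plan is then simply to invoke the $W^{1,n}$-form of the Liouville theorem quoted above, applied in ambient dimension $n+1$: because $f$ is pointwise orientation-non-reversing its Jacobian does not change sign, so $f$ agrees locally with a M\"obius transformation $x \mapsto b + \frac{\alpha A(x-a)}{\lvert x - a\rvert^{\epsilon}}$ with $a \notin \Omega$ and $\epsilon \in \{0,2\}$. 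Every such map is a homeomorphism of $\Omega$ onto its image, in particular a local homeomorphism, and that case is finished. I note that this route uses only $W^{1,n+1}_{loc}$-regularity, far below the critical smoothness; the sharp exponent becomes essential only for the genuinely quasiregular generalization, where no rigidity forcing $f$ to be M\"obius is available.

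For the uniformly quasiregular case the plan is instead to rule out branching directly. First, by Reshetnyak's theorem a non-constant quasiregular map is discrete, open, and sense-preserving, and it is a local homeomorphism precisely away from its branch set $B_f$, a closed set of topological dimension at most $n-1$; hence it suffices to prove $B_f = \emptyset$. Arguing by contradiction, I would fix $x_0 \in B_f$ with local index $i(x_0,f) = k \geq 2$. Because $f \in C^1$, if $Df(x_0)$ were nonsingular the inverse function theorem would make $f$ a local homeomorphism near $x_0$; so $Df(x_0)$ is degenerate in the two ``branching'' directions transverse to $B_f$. I would then pass to anisotropic blow-ups $f_r(y) = \frac{f(x_0 + r y) - f(x_0)}{s(r)}$ under the appropriate stretching scale $s(r)$, extracting a non-constant, homogeneous, $K$-quasiregular tangent map $T$ that still branches at the origin. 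The $C^{\alpha}$-smoothness with $\alpha = \frac{n+1}{n-1}$ forces the homogeneity degree $\lambda$ of the branching part of $T$ to satisfy $\lambda \geq \alpha$, since a leading homogeneous part of degree $\lambda$ grows like $\lvert y\rvert^{\lambda}$ and must be dominated by the H\"older modulus $\lvert y\rvert^{\alpha}$.

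The crux, and the step I expect to be the main obstacle, is the quantitative incompatibility of a high homogeneity degree with branching. This is where conformal-capacity and modulus-of-curve-family estimates enter: one bounds the modulus of the families of curves joining small spheres about $x_0$ and about $f(x_0)$, using that $f$ covers a neighborhood of $f(x_0)$ with multiplicity $k \geq 2$, to show that a branched homogeneous quasiregular tangent map in ambient dimension $n+1$ must have homogeneity degree strictly below the critical value $\frac{n+1}{n-1} = \frac{m}{m-2}$. Combining $\alpha \leq \lambda < \frac{n+1}{n-1}$ with $\alpha = \frac{n+1}{n-1}$ yields the contradiction, forcing $B_f = \emptyset$ and hence the local-homeomorphism conclusion. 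Pinning down the capacity estimate with the correct constant, and verifying that the tangent map genuinely inherits both the branching and the homogeneity, is the technical heart of the matter; this is precisely the content furnished by Bonk and Heinonen, and I would cite their result for these steps rather than reprove them here.
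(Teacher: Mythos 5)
You should first note that the paper does not prove this statement at all---it is quoted from \cite{BonkHeinonen} as a known result---so there is no internal proof to match; your proposal has to stand on its own. The half of your argument devoted to the CR-system does stand: since $\tfrac{n+1}{n-1}>1$, the hypothesis places $f$ in $C^1\subset W^{1,n+1}_{loc}$, the orientation-non-reversing assumption keeps $J(f)\ge 0$, and the Sobolev-regularity Liouville theorem in ambient dimension $n+1\ge 3$ (the Gehring--Reshetnyak / Iwaniec--Martin extension quoted just above in the paper) forces a non-constant solution to be the restriction of a M\"obius transformation with pole outside $\Omega$, hence a diffeomorphism onto its image. That argument is complete, needs none of the critical smoothness, and in fact delivers much more than local injectivity.

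The genuinely quasiregular half, however, has a gap exactly at its pivot. The assertion that a branched, homogeneous, quasiregular tangent map in $\R^{n+1}$ must have homogeneity degree strictly below $\tfrac{n+1}{n-1}$ cannot be correct with a constant independent of the dilatation: the standard modulus-of-curve-family estimates bound the order of flatness at a branch point only by quantities of the form $\bigl(i(x_0,f)\,K_I(f)\bigr)^{1/n}$, which exceed the critical exponent once $K$ is large, and flattened winding-type maps realize branch points of arbitrarily high flatness order at the cost of large $K$. A $K$-independent bound is precisely what makes the Bonk--Heinonen theorem nontrivial, and it cannot be extracted from capacity estimates on the tangent map alone; their argument has to re-use the smoothness globally along the branch set (e.g.\ that $Df$ vanishes on all of $B_f$, giving $\lvert Df(x)\rvert\lesssim \mathrm{dist}(x,B_f)^{2/(n-1)}$, together with dimension estimates on $f(B_f)$). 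Relatedly, your step ``smoothness forces $\lambda\ge\alpha$'' presupposes that all derivatives of $f$ up to order $\lfloor\alpha\rfloor$ vanish at the branch point, which is immediate only for the first derivative. Since you defer exactly these steps to \cite{BonkHeinonen}, your treatment of the general case is in the end a citation rather than a proof---which is also what the paper does---but the intermediate claims should not be stated in a form that is false for unconstrained $K$.
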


Alternately, if one is willing to trade a specific, small upper bound on the distortion for less regularity, there is another relevant local rigidity result.  Let $\Omega$ be a open domain in $\R^n$, and $f$ a quasiregular map into $\R^n$.  Then, the \emph{outer dilation} of $f$, $K_O(f)$ is defined
\begin{equation*}
 K_O(f) := \mathrm{inf}_{K > 1} \left \{ K \Big \vert \quad \frac{1}{n^{\frac n2}}\lvert df \rvert^n \leq K J(f) \quad \mathrm{a.e.} \right\}.
\end{equation*}
  
\begin{Thm}{\cite{MartioRickmanVaisala71},\cite{Goldshtein71}, c/f \citep{Srebro92}{2.3, (ii)}}\label{rigidity}
There exists a universal constant $K>1$ such that every quasiregular mapping $f \colon \Omega \rightarrow \R^n,\,\, n\geq 3$ with $K_O(f) \leq K$ is a local homeomorphism. 
\end{Thm}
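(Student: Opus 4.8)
The plan is to reduce the statement to a \emph{gap} phenomenon for the branch set: in dimension $n \geq 3$ a quasiregular map cannot branch without incurring a definite amount of distortion. First I would invoke the Reshetnyak theory cited above: a non-constant quasiregular map $f$ is sense-preserving, discrete, and open, so it carries a well-defined positive local topological index $i(x,f)$, and its branch set $B_f$ --- the closed set of points at which $f$ fails to be a local homeomorphism --- coincides with $\{x : i(x,f) \geq 2\}$ and has topological dimension at most $n-2$. The theorem is then equivalent to the assertion that $B_f = \emptyset$ once $K_O(f)$ is sufficiently close to $1$. It is worth stressing why the hypothesis $n \geq 3$ is essential and why the result has genuine content: in the plane the holomorphic map $z \mapsto z^2$ has $K_O \equiv 1$ yet branches at the origin, so branching is perfectly compatible with zero distortion. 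The theorem asserts that this coexistence is a two-dimensional accident --- in higher dimensions branching forces $K_O$ strictly above $1$ by a definite amount.

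Suppose for contradiction that $x_0 \in B_f$, with local index $m := i(x_0,f) \geq 2$. The strategy is to compare conformal capacities (equivalently, moduli of curve families) across the branch point. I would choose a small normal neighborhood $U$ of $x_0$ on which $f$ is a branched cover of local degree $m$ onto $f(U)$, and consider the ring-shaped condenser lying between two concentric spheres about $x_0$. The governing tool is the fundamental modulus inequality for quasiregular maps (the inequalities of V\"ais\"al\"a and Poletsky), which relates the modulus of a curve family in the source to the dilatation $K_O(f)$ and the image modulus, counted with the local multiplicity. Because every curve near $x_0$ is covered $m$ times, the multiplicity $m$ enters this inequality and cannot be absorbed.

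The crux is the resulting capacity comparison. Restricted to a small sphere about $x_0$, the map is homotopic to a degree-$m$ branched self-cover of $S^{n-1}$, and a degree-$m$ map distorts the conformal modulus of the spherical ring by a factor bounded away from $1$ when $m \geq 2$ and $n \geq 3$. Feeding this into the modulus inequality yields a pointwise lower bound $K_O(f) \geq \Psi_n(m)$, where $\Psi_n$ is an increasing function with $\Psi_n(m) \geq \Psi_n(2) > 1$ for all $m \geq 2$; the strict inequality $\Psi_n(2) > 1$ is exactly the higher-dimensional rigidity that fails when $n = 2$. Setting $K := \Psi_n(2)$ (or any value in $(1, \Psi_n(2))$), the hypothesis $K_O(f) \leq K$ is then incompatible with the existence of any branch point, forcing $B_f = \emptyset$ and hence that $f$ is a local homeomorphism.

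The main obstacle is precisely the capacity estimate $\Psi_n(2) > 1$, uniform in the location of the putative branch point. This is the analytic heart of the argument and the substance of the cited work of Martio--Rickman--V\"ais\"al\"a and Gol'dshtein: it requires sharp control of the moduli of Gr\"otzsch- and Teichm\"uller-type ring domains under degree-$m$ spherical coverings, together with an argument that the lower bound on the dilatation is genuine --- i.e.\ that degree $m \geq 2$ branching cannot be made arbitrarily cheap by a clever choice of local geometry. Everything else (the reduction via Reshetnyak, the modulus inequality, and the final contradiction) is formal once this quantitative gap is established.
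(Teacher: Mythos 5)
The paper does not prove this theorem: it is quoted from Martio--Rickman--V\"ais\"al\"a and Gol\cprime dshtein (via Srebro) and used as a black box in Corollary \ref{Cor1}, so there is no internal argument to compare yours against. Judged on its own terms, your outline is a faithful reconstruction of the strategy in the cited sources: reduce to showing $B_f = \emptyset$ via Reshetnyak's theory (non-constant quasiregular maps are sense-preserving, discrete and open, with a positive local index and a branch set of topological dimension at most $n-2$), then show that a branch point of local index $m \geq 2$ forces the dilatation above a definite threshold by comparing moduli of curve families through a normal neighborhood via the V\"ais\"al\"a/Poletsky inequality, in which the multiplicity $m$ enters as a divisor. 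Your identification of where $n \geq 3$ is used --- the planar map $z \mapsto z^2$ is $1$-quasiregular yet branched, so the gap phenomenon is genuinely higher-dimensional --- is exactly the right sanity check.

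As a proof, however, the proposal has one acknowledged gap and one imprecision. The gap is the one you name yourself: the uniform estimate $\Psi_n(2) > 1$ is asserted, not derived, and it is the entire analytic content of the theorem; everything surrounding it is formal. The imprecision is in the sentence claiming the restriction of $f$ to a small sphere is ``a degree-$m$ branched self-cover of $S^{n-1}$'' whose modulus distortion is bounded away from $1$. For $n \geq 3$ a degree-$m$ map $S^{n-1} \to S^{n-1}$ need not resemble a branched cover, and there is no off-the-shelf modulus estimate for arbitrary degree-$m$ sphere maps; the actual argument in the cited papers works instead with path lifting through the normal neighborhood and the fact that $B_f$ and $f(B_f)$ have topological dimension at most $n-2$, hence cannot obstruct the relevant curve families. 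Since the paper itself only cites the result, your sketch is adequate for the role the theorem plays here, but the capacity gap would have to be supplied before it could stand as a proof.
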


\begin{Lem}
Let $u \colon X \rightarrow X$ be a multiholomorphic endomorphism of an $n+1$-dimensional Riemannian manifold $X$ with conformal triad.  Then $u$ is locally quasiregular in the sense that every point in $X$ has a coordinate neighborhood on which $u$ becomes quasiregular with respect to the Euclidean metrics. 
\end{Lem}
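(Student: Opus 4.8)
The plan is to reduce the statement to the local Euclidean picture and to exploit two facts: the multiholomorphic equation forces $u$ to have \emph{vanishing} distortion with respect to the Riemannian metric $g_X$, and on a relatively compact coordinate patch $g_X$ is uniformly comparable to the flat metric. Bounded distortion is stable under such a two-sided perturbation of the metric, and this stability is exactly what produces a finite quasiregularity constant in the charts. So the whole argument is soft once the relevant comparison of metrics is set up; only a short piece of singular-value bookkeeping is genuinely needed.

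Concretely, I would fix $p \in X$ and, using the continuity of $u$ (which follows from the $C^{1,\alpha}$-regularity of $(n+1)$-harmonic maps recorded in Section \ref{SubSecVariation}), choose coordinate charts $\phi \colon U \to \R^{n+1}$ about $p$ and $\psi \colon V \to \R^{n+1}$ about $u(p)$ with $\overline{U}$ compact inside its chart domain and $u(U) \subset V$, setting $\tilde u := \psi \circ u \circ \phi^{-1}$. Since $u$ is $C^{1,\alpha}$ and the transition maps are smooth, $\tilde u \in W^{1,n+1}_{loc} \cap C^0$, so the regularity hypothesis of Definition \ref{qrDef} holds. Writing $G$ and $H$ for the Gram matrices of $g_X$ in the domain and target charts, Lemma \ref{Ortholem} (applied with target again $X$) says that at every regular point $du$ carries a $g_X$-orthonormal frame to an orthogonal frame of equal lengths, while at critical points weak conformality forces $du=0$; equivalently the frame-adjusted derivative $H^{1/2}\,d\tilde u\,G^{-1/2}$ (with $H$ evaluated along $\tilde u$) is a nonnegative scalar multiple $\lambda O$ of an orthogonal matrix almost everywhere.

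It then remains to bound the distortion of $\tilde u$ itself in the flat metric. On the compact sets $\overline{\phi(U)}$ and $\overline{\psi(V)}$ the positive continuous matrix fields satisfy $\Lambda^{-1} I \leq G \leq \Lambda I$ and $\Lambda^{-1} I \leq H \leq \Lambda I$ for some $\Lambda \geq 1$. Factoring $d\tilde u = H^{-1/2}(\lambda O)G^{1/2}$ and estimating singular values of a product, I would bound the ratio of largest to smallest Euclidean singular value of $d\tilde u$ by the product of the condition numbers of $G^{1/2}$ and $H^{1/2}$, hence by $\Lambda^{2}$. Where $\lambda=0$ both sides of the quasiregularity inequality vanish; where $\lambda>0$ this distortion bound, combined with the orientation-non-reversing property of multiholomorphic maps, gives $\frac{1}{(n+1)^{(n+1)/2}}|d\tilde u|^{n+1} \leq \Lambda^{2(n+1)}\,J(\tilde u)$ almost everywhere, which is $Q$-quasiregularity with $Q=\Lambda^{2(n+1)}$ in the sense of Definition \ref{qrDef}.

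The only real work is that last singular-value estimate: one must verify that passing from the $g_X$-orthonormal description, in which the dilation is trivially $1$, to the flat description degrades the dilation by at most a factor depending solely on the comparability constant $\Lambda$, uniformly over the patch. This is elementary linear algebra once the uniform two-sided bound on $G$ and $H$ is available, and the compactness of $\overline{U}$ (together with $u(U)\subset V$) is precisely what guarantees such a bound exists. Everything else is unwinding the definitions, so I expect no serious obstacle beyond bookkeeping.
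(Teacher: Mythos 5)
Your argument is correct and follows essentially the same strategy as the paper's proof: both convert the exact (distortion-one) conformality of $du$ with respect to the Riemannian metrics into Euclidean quasiregularity by bounding, uniformly over a relatively compact coordinate patch, the two-sided comparison between the coordinate Gram matrices and the identity. Your factorization $d\tilde u = H^{-1/2}(\lambda O)G^{1/2}$ and the resulting condition-number estimate package more cleanly what the paper carries out with separate sup/inf distortion functions ($S,I$ for vector norms and $R,Q$ for top-degree volume norms, in normal coordinates), but the content is the same.
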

\begin{proof}
Throughout this proof we work under the assumption that $u$ is $C^1$ in order to keep notation clear.  This is not a substantive restriction; one must merely write $\mathrm{a.e.}$ and $\mathrm{esssup}$ where applicable.
 
Since $u$ is $1$-quasiregular in the Riemannian sense with metrics $g,g^\prime$, in particular it has constant distortion $1$.  That is, 
\begin{equation*}
 \mathrm{max}_{\alpha} \frac{\lvert du(\alpha) \rvert_{g^\prime}}{\lvert \alpha \rvert_g} =  \mathrm{min}_{\alpha} \frac{\lvert du(\alpha) \rvert_{g^\prime}}{\lvert \alpha \rvert_g}
\end{equation*}
at regular points of $u$.  

We consider a bounded local normal coordinate chart on $X$ small enough that its image under $u$ falls inside a local normal coordinate chart on $X$ centered at $u(0)$.  Hence, w.l.o.g. we can regard $u \colon \Omega \subset \R^{n+1} \rightarrow \R^{n+1}$ with $u(0)=0$.  
The distortion between the domain metric $g$ and the Euclidean metric on this chart can be measured by functions
\begin{equation*}
 S(x):= \max_{\alpha \neq 0 \in \R^{n+1}} \frac{\lvert \alpha \rvert_0}{\lvert \alpha \rvert_g}
\end{equation*}
\begin{equation*}
 I(x):= \min_{\alpha \neq 0 \in \R^{n+1}} \frac{\lvert \alpha \rvert_0}{\lvert \alpha \rvert_g}.
\end{equation*}
Furthermore, the coordinate domains can always be chosen such that these distortion functions are bounded.  After all, in such coordinates $g$ is Euclidean to second order in the radius which means, asymptotically for any $\alpha$ as above, 
\begin{equation*}
 1-o(r^2) \leq \frac{\lvert \alpha \rvert_0}{\lvert \alpha \rvert_g} \leq 1 + o(r^2).
\end{equation*}
So there exist finite bounds
\begin{equation*}
 S := \sup_{x} S(x) \geq 1, \quad I := \inf_x I(x) \leq 1.
\end{equation*}
Let $S^\prime$ and $I^\prime$ be the corresponding bounds in the coordinate patch in the image.  Then the hypothesized distortion inequality (at an arbitrary regular point of the domain)
\begin{equation*}
 \max_{\alpha} \frac{\lvert du(\alpha) \rvert_{g^\prime}}{\lvert \alpha \rvert_g} = \min_{\alpha} \frac{\lvert du(\alpha) \rvert_{g^\prime}}{\lvert \alpha \rvert_g}
\end{equation*}
leads to 
\begin{equation*}
 \max_{\alpha} \frac{\lvert du(\alpha)}{\lvert \alpha \rvert_0} \leq \frac{S \cdot S^\prime}{I \cdot I^\prime}\min_{\alpha} \frac{\lvert du(\alpha) \rvert_{g^\prime}}{\lvert \alpha \rvert_g}.
\end{equation*}
Hence, $u$ is locally of $\frac{S \cdot S^\prime}{I \cdot I^\prime}$-bounded distortion.

Similarly we measure the distortion of the volume of $n+1$-planes between the given metrics and the Euclidean metrics by functions
\begin{equation*}
 R(x):= \max_{A \neq 0 \in \Lambda^{n+1}\R^{n+1}} \frac{\lvert A \rvert_0}{\lvert A \rvert_g}
\end{equation*}
\begin{equation*}
 Q(x):= \min_{A \neq 0 \in \Lambda^{n+1}\R^{n+1}} \frac{\lvert A \rvert_0}{\lvert A \rvert_g}.
\end{equation*}
Furthermore, the coordinate domains can always be chosen small enough such that these volume-distortion functions are bounded.  After all, in such coordinates $g$ is Euclidean to second order in the radius which means, the induced metric on the higher exterior powers of $R^{n+1}$ are also asymptotically Euclidean to at least second order.  

So there exist finite bounds
\begin{equation*}
 R := \sup_{x} R(x) \geq 1, \quad Q := \inf_x Q(x) \leq 1.
\end{equation*}
Let $R^\prime$ and $Q^\prime$ be the corresponding bounds in the coordinate patch in the image.

Then, we estimate from above the norm $\lvert du \wedge \dotsc \wedge du \rvert$:
\begin{equation*}
 \lvert du \wedge \dotsc \wedge du \rvert := \frac{\lvert du(e_0) \wedge \dotsc \wedge du(e_n) \rvert}{\lvert e_0 \wedge \dotsc \wedge e_n \rvert} \leq \frac{R}{Q^\prime}\frac{\lvert du(e_0) \wedge \dotsc \wedge du(e_n)\rvert_0}{\lvert e_0 \wedge \dotsc \wedge e_n \rvert_0} =\frac{R}{Q^\prime} \lvert du \wedge \dotsc \wedge du \rvert_0.
\end{equation*}
From below we estimate the norm $\lvert du \rvert$, where $\{v_i\}$ is a $g$-orthogonal frame.
\begin{equation*}
 \lvert du \rvert := \sum_{v_i} \frac{\lvert du(v_i) \rvert}{\lvert v_i \rvert}
\end{equation*}
Because $du$ pointwise is a $g$-$g^\prime$-orthogonal transformation, we can replace the above sum with the Euclidean coordinate frame 
\begin{equation*}
 \lvert du \rvert = \sum_{e_i}  \frac{\lvert du(e_i) \rvert}{\lvert e_i \rvert}.
\end{equation*}
By the distortion estimates already computed, we get a lower bound on $\lvert du \rvert$,
\begin{equation*}
 \frac{I}{S^\prime}\lvert du \rvert_0 \leq \lvert du \rvert.
\end{equation*}
The definition of $1$-quasiregularity specifies
\begin{equation*}
 \frac{1}{(n+1)^{\frac{n+1}{2}}}\lvert du \rvert^{n+1} = \lvert du \wedge \dotsc \wedge du \rvert.
\end{equation*}
So we have the desired estimate in terms of the Euclidean metrics:
\begin{equation*}
 \frac{1}{(n+1)^{\frac{n+1}{2}}}\lvert du \rvert_0^{n+1} \leq \frac{RS^\prime}{IQ^\prime}\lvert du \wedge \dotsc \wedge du \rvert_0.
\end{equation*}
\end{proof}

\begin{Cor}(Of Thm (\ref{rigidity}) and the lemma)\label{Cor1}
 Let $X^{n+1}$ be a compact Riemannian manifold with $n \geq 2$.  Then a non-constant multiholomorphic endomorphism of class $W^{1,n+1} \cap C^0$ is a local homeomorphism, and hence is an orientation-preserving conformal covering.
\end{Cor}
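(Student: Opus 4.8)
The plan is to deduce the corollary from Theorem \ref{rigidity} by upgrading the local quasiregularity supplied by the preceding lemma into a genuine bound on the \emph{outer dilation} that lies below the universal threshold $K$. Since being a local homeomorphism is a local property, I would fix an arbitrary point $x \in X$ and work in a pair of geodesic normal coordinate charts, one centered at $x$ in the domain and one centered at $u(x)$ in the target, the latter chosen (using continuity of $u$) large enough that the $u$-image of the domain chart lands inside it. The preceding lemma already established that in these Euclidean coordinates $u$ satisfies
\begin{equation*}
 \frac{1}{(n+1)^{\frac{n+1}{2}}}\lvert du \rvert_0^{n+1} \leq \frac{RS^\prime}{IQ^\prime}\lvert du \wedge \dotsc \wedge du \rvert_0 \quad \mathrm{a.e.},
\end{equation*}
so that $K_O(u) \leq \frac{RS^\prime}{IQ^\prime}$ on the chart, and this is exactly the quantity Theorem \ref{rigidity} constrains.

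The crux of the argument is to make the constant $\frac{RS^\prime}{IQ^\prime}$ as close to $1$ as desired. I would exploit the fact that in normal coordinates $g$ agrees with the Euclidean metric to second order in the radius; consequently the length-distortion bounds $S,I$, the volume-distortion bounds $R,Q$, and their primed target analogues all tend to $1$ as the chart radius shrinks to $0$. By first shrinking the domain radius and then, via continuity (indeed $C^0$-regularity) of $u$, controlling how large a target chart the image must occupy, I can arrange $\frac{RS^\prime}{IQ^\prime} < K$, where $K>1$ is the universal constant of Theorem \ref{rigidity}. Because $n \geq 2$ the ambient dimension is $n+1 \geq 3$, so the hypotheses of that theorem are met on the (possibly smaller) neighborhood, and it yields that $u$ is a local homeomorphism near $x$. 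As $x$ was arbitrary, $u$ is a local homeomorphism on all of $X$.

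Finally I would assemble the global conclusion. Being a non-constant multiholomorphic local homeomorphism, $u$ is an orientation-preserving conformal map by Lemma \ref{AutomLem}. Since $X$ is compact, $u$ is automatically proper, and a proper local homeomorphism between connected manifolds of equal dimension is a covering map; thus $u$ is an orientation-preserving conformal covering, as claimed. Throughout, because $u$ is only of class $W^{1,n+1}\cap C^0$, every derivative identity is read almost everywhere, matching the integrability hypothesis of Theorem \ref{rigidity} and Definition \ref{qrDef}.

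The step I expect to be the main obstacle is the uniform control of $\frac{RS^\prime}{IQ^\prime}$ below $K$. The delicate point is the coupling between the two charts: shrinking the domain radius forces a reexamination of how large a target chart the image fills, and one must verify---using continuity of $u$ and, for a statement valid uniformly over $X$, a compactness covering argument---that the domain and target distortion constants can be driven to $1$ \emph{simultaneously}. Everything else is a matter of invoking the quoted Euclidean rigidity theorem and the standard fact that proper local homeomorphisms are coverings.
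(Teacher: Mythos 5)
Your argument is essentially identical to the paper's: both deduce from the preceding lemma that $u$ is locally quasiregular with outer dilation bound controlled by the normal-coordinate distortion constants, shrink the charts so that this bound drops below the universal threshold of Theorem \ref{rigidity}, and conclude that $u$ is a local homeomorphism. Your extra care about the domain/target chart coupling and the final assembly (conformality via Lemma \ref{AutomLem}, properness on a compact domain giving the covering property) only fills in details the paper leaves implicit.
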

\begin{proof}
The previous lemma implies that any multiholomorphic endomorphism is locally quasiregular for some dilation bound $K$ depending on the particular local normal coordinate patch chosen.  The patch in the domain can be chosen to be arbitrarily small, and since the Riemannian metric is asymptotic to the Euclidean one, the distortion bound on the patch must get arbitrarily close to $1$ as the radius is reduced.  This means that every point in $X$ has a coordinate neighborhood (and a neighborhood centered at it's image), with arbitrarily small distortion bound $K \geq 1$.  By the rigidity theorem (\ref{rigidity}), this means the function is a local homeomorphism.  
\end{proof}

We have demonstrated that multiholomorphic endomorphisms cannot have branch loci if the dimension of the domain is greater than two.  A similar question could be raised about general multiholomorphic maps.  With a view towards this issue we demonstrate a unique continuation theorem for smooth multiholomorphic maps.  This result has the effect that it characterizes the critical locus of a smooth multiholomorphic map as a finite collection of points.  For regularity $C^3$, the best one can say by the same methods is that the critical locus will have Hausdorff codimension at least two.

The main observation to leverage is that any multiholomorphic map is weakly conformal (see eqn. (\ref{weakconformal})).  Pan, \cite{Pan95}, has established a unique continuation theorem for weakly conformal maps between Riemannian manifolds of the same dimension which can be extended without much work to the current situation.  As a result we can state the following theorem.

\begin{Thm}\label{uniquecontinuation}
 Let $u \colon X \rightarrow M$ be a multiholomorphic map of regularity at least $C^3$.  Then, if $u$ vanishes to infinite order at any point it is constant.  Furthermore, if $u$ is smooth, the critical locus of $u$ consists of a finite collection of points in $X$.  
\end{Thm}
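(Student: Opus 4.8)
The plan is to build everything on the weak conformality relation $u^*g = \frac{1}{n+1}\lvert du\rvert^2 g_X$ recorded in~(\ref{weakconformal}). The first thing I would extract from it is a rigid dichotomy: at each point of $X$ either $\lvert du\rvert > 0$, in which case $u^*g$ is positive definite and $du$ is injective of full rank $n+1$, or else $\lvert du\rvert = 0$ and $du$ vanishes outright (intermediate rank is impossible, since it would make $u^*g$ degenerate while $\lvert du\rvert^2$ stays positive). Consequently the critical locus coincides exactly with the closed zero set $Z := \{x \in X : du(x) = 0\}$, and because $X$ is compact it suffices to prove that $Z$ is discrete. The two assertions then separate cleanly: the infinite-order claim is a strong unique continuation statement, and the finiteness claim is an isolation statement for $Z$.

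For the unique continuation half I would invoke Pan's theorem \cite{Pan95} for weakly conformal maps. The only point needing attention is that Pan assumes equidimensional targets, whereas here $\dim M = m$ may exceed $n+1$. I would remove this restriction by composing an isometric (Nash) embedding $M \hookrightarrow \R^N$ with $u$ and analyzing the system satisfied by the components $u^a$. Multiholomorphic maps are $(n+1)$-harmonic (Section~\ref{SubSecVariation}); although the $(n+1)$-Laplacian is degenerate on $Z$, the weak conformality is precisely what lets one recast the Euler--Lagrange system as an Aronszajn-type differential inequality $\lvert \Delta u^a \rvert \le C(\lvert u - u(x_0)\rvert + \lvert du\rvert)$ near a point $x_0$, the degenerate weight $\lvert du\rvert^{n-1}$ being absorbed by the conformal factor. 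The classical Aronszajn--Sampson argument applied to the $u^a$ then forces vanishing of $u - u(x_0)$ to infinite order to imply $u \equiv u(x_0)$.

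For the finiteness half I would argue by a blow-up around a hypothetical non-isolated critical point $x_0 \in Z$, i.e. a sequence of distinct critical points accumulating at $x_0$. By the first half, $u - u(x_0)$ vanishes to some finite order $k \ge 2$ at $x_0$, so in normal coordinates its leading Taylor term is a nonzero homogeneous vector-valued polynomial $P$ of degree $k$. Rescaling and passing to the limit in the MCR equation -- and here smoothness is exactly what lets me control all lower-order terms and freeze the target triad $(\omega,g,(J,K))$ at $u(x_0)$ -- exhibits $P$ as a nonzero homogeneous solution of the constant-coefficient Euclidean CR-system into the model tangent space. By Lemma~\ref{Ortholem} the differential of such a map sends any positively oriented orthonormal frame to an orthogonal frame at regular points, so the image of $P$ lies in the $(n+1)$-dimensional tangent cone of the calibrated image $u(X)$ (Section~\ref{SubSecCalGeom}); this reduces $P$ to an equidimensional homogeneous CR-system solution. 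Reshetnyak's theorem then makes $P$ discrete and open, and, $P$ being a smooth (polynomial) $1$-quasiregular map, the local-homeomorphism rigidity of Theorem~\ref{rigidity} and \cite{BonkHeinonen} forces $dP \ne 0$ off the origin. Since $dP$ is the leading part of $du$, this gives $\lvert du\rvert \gtrsim r^{k-1}$ on small spheres of radius $r$ about $x_0$, so $du$ is nonvanishing on a punctured neighborhood, contradicting accumulation. Hence $Z$ is discrete, and compactness of $X$ makes it finite.

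The main obstacle I anticipate is the interface between the degenerate, higher-codimension nature of $u$ and the rigid equidimensional Euclidean theory that both halves rely upon. Concretely, in the unique continuation step one must make the Aronszajn inequality genuinely uniform despite the weight $\lvert du\rvert^{n-1}$ collapsing along $Z$ -- this is exactly where dropping from smoothness to mere $C^3$ degrades the conclusion to Hausdorff codimension at least two rather than discreteness -- and in the blow-up step the identification of the tangent cone of the calibrated image with an $(n+1)$-plane, so that the model $P$ becomes equidimensional and Reshetnyak applies, is the delicate geometric input that must be secured before the rigidity theorems can be brought to bear.
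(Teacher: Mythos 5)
You have the right two ingredients in view --- weak conformality and an Aronszajn-type unique continuation theorem --- but both halves of your argument stall at exactly the points you flag as ``obstacles,'' and the paper's proof avoids them entirely by applying Aronszajn to a different object. The paper (following Pan \cite{Pan95}) never analyzes the components of $u$: it shows that the single scalar function $\lambda = \tfrac{1}{n+1}\lvert du\rvert^{2}$ satisfies a Laplace-type equation $\Delta\lambda = g$ with $\lvert g\rvert \le C\lambda$, via a Bochner-type computation with the pullback connection $\nabla^{*}\otimes I + I\otimes u^{*}\nabla^{M}$ in which weak conformality is used to cancel the unbounded terms. Aronszajn applied to $\lambda$ then yields both conclusions: infinite-order vanishing forces $\lambda\equiv 0$, and for smooth $u$ a zero of $\lambda$ of finite order is isolated (the leading Taylor term of the non-negative $\lambda$ is a non-negative harmonic homogeneous polynomial, hence positive off the origin). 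Because the equation lives on $X$ and is scalar, the codimension of the target never enters --- the equidimensionality worry that drives your Nash-embedding detour and your entire blow-up construction simply does not arise.

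The concrete gaps in your version are these. First, the inequality $\lvert\Delta u^{a}\rvert\le C(\lvert u-u(x_0)\rvert+\lvert du\rvert)$ is asserted, not derived: expanding $\nabla^{*}(\lvert du\rvert^{n-1}du)=0$ and dividing by $\lvert du\rvert^{n-1}$ leaves a term of size $\lvert du\rvert\cdot\lvert\nabla\log\lvert du\rvert\rvert$, and $\nabla\log\lvert du\rvert$ is precisely what is uncontrolled near the critical set; saying the weight is ``absorbed by the conformal factor'' names the difficulty rather than resolving it. Pan's device of passing to $\lambda$ exists to sidestep exactly this. Second, in the blow-up half, the reduction of the tangent map $P\colon\R^{n+1}\to T_{u(x_0)}M$ to an \emph{equidimensional} CR-system solution requires the tangent cone of the calibrated image at $u(x_0)$ to be a single $(n+1)$-plane; this is not proved and is not automatic (calibrated cones, e.g.\ associative cones in $\R^{7}$, need not be planar). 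Worse, even granting it, the Liouville theorem in dimension $\ge 3$ makes every $C^{3}$ solution of the equidimensional CR-system a M\"obius map, and a homogeneous polynomial M\"obius map is linear --- so your nonzero $P$ of degree $k\ge 2$ could not exist at all, and the argument would conclude that the critical locus is \emph{empty} rather than finite. That overshoot is a signal that the equidimensional reduction is where the reasoning breaks. The scalar route through $\lambda$ is both correct and substantially shorter.
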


The proof of the theorem ---which is essentially the main result of \cite{Pan95}--- is established by a two-step process.  First we observe that the function $\lambda = \frac{\lvert du \rvert^2}{n+1}$ satisfies a second order, elliptic equation of Laplace type.  This largely follows the calculation in \cite{Pan95} used to establish the estimate $\lvert \Delta \lambda \rvert \leq C \lambda$; we give a invariant translation.  From the first observation one can massage the important result of Aronszajn concerning unique continuation of solutions to second-order elliptic equations.  The result implies that the differential of a multiholomorphic map cannot vanish to infinite order anywhere unless the map is constant.  Furthermore, if the map were smooth to begin with then the zero locus of $\lambda$ cannot have accumulation points.  We organize this argument into two lemmas.

Throughout the rest of this section, any time a local calculation is made, we will use local normal coordinates with tangent frame $\{ e_i \}$ at the origin.  We will use the shorthand $\nabla_i := \nabla_{e_i}$, and $du_i := du(e_i)$.

\begin{Lem}
Let $u: X \rightarrow M$ be a $C^3$ multiholomorphic map.  The function $\lambda := \frac{\lvert du \rvert^2}{n+1}$ satisfies a second-order, equation of Laplace type,
\begin{equation}\label{WeakConfEqn}
 \Delta \lambda = g.
\end{equation}
The non-homogeneity function $g$ satisfies the estimates,
\begin{equation}
 \lvert g \rvert \leq C \lambda.
\end{equation}
\end{Lem}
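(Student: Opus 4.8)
The plan is to adapt Pan's strategy \cite{Pan95} from the equidimensional weakly conformal setting to a multiholomorphic map into a (possibly higher-dimensional) target. First I would reduce to a regular point $x$ and work in $g_X$-normal coordinates centered there, writing $du_i := du(e_i)$ and $\nabla_i := \nabla_{e_i}$ for an oriented orthonormal frame $\{e_i\}$. Two facts drive the computation: the weak conformality $u^{*}g=\frac{1}{n+1}\lvert du\rvert^{2}g_X$ established in Section~\ref{SubSecMCRE}, which in this frame reads $g(du_i,du_j)=\lambda\delta_{ij}$, and the symmetry $\nabla_i du_j=\nabla_j du_i$ of the Hessian $\nabla du$ at the center of normal coordinates, already used in Section~\ref{SubSecVariation}.

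Next I would differentiate the conformality relation. Setting $h_{ijk}:=g(\nabla_k du_i,du_j)$ and applying $\nabla_k$ to $g(du_i,du_j)=\lambda\delta_{ij}$ gives $h_{ijk}+h_{jik}=(\nabla_k\lambda)\delta_{ij}$, while the Hessian symmetry gives $h_{ijk}=h_{kji}$. These two relations determine $h$ exactly as the Koszul formula determines Christoffel symbols, yielding
\[
h_{ijk}=\frac{1}{2}\big[(\nabla_k\lambda)\delta_{ij}+(\nabla_i\lambda)\delta_{jk}-(\nabla_j\lambda)\delta_{ki}\big].
\]
Thus the part of $\nabla du$ tangent to the image is controlled entirely by $\nabla\lambda$; in particular $\sum_{i,j,k}h_{ijk}^{2}$ is a fixed multiple of $\lvert\nabla\lambda\rvert^{2}$.

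Then I would feed this into the Bochner--Weitzenb\"ock identity for the energy density, $\frac{n+1}{2}\Delta\lambda=\lvert\nabla du\rvert^{2}+\sum_i g(\nabla_i\tau,du_i)+\mathcal{R}$, where $\tau=\trace_{g_X}\nabla du$ is the tension field and $\mathcal{R}$ collects the contractions of $\mathrm{Ric}^X$ and of the target curvature $R^M$ against $du$. Since $du$ enters $\mathcal{R}$ quadratically and $\lvert du\rvert^{2}=(n+1)\lambda$, one has $\lvert\mathcal{R}\rvert\le C\lambda$ at once. Because $u$ is multiholomorphic with torsion-free $J$, the $(n+1)$-Poisson theorem of Section~\ref{SubSecVariation} makes $u$ an $(n+1)$-harmonic map, so expanding $\nabla^{*}(\lvert du\rvert^{n-1}du)=0$ yields $\tau=-\frac{n-1}{2\lambda}\sum_k(\nabla_k\lambda)\,du_k$. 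Substituting this together with the Koszul identity rewrites $\sum_i g(\nabla_i\tau,du_i)$ in terms of $\lambda$, $\nabla\lambda$ and $\Delta\lambda$ alone.

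The hard part is the final step: collecting terms and showing that everything apart from a genuine zeroth-order inhomogeneity bounded by $C\lambda$ either cancels or is absorbed into the principal operator. The tangential piece of $\lvert\nabla du\rvert^{2}$ is $O(\lvert\nabla\lambda\rvert^{2}/\lambda)$ by the Koszul identity and pairs off against the tension contribution; the delicate point is the component of $\nabla du$ normal to the image, which weak conformality alone does not detect. Here I expect to need the full multiholomorphic equation --- not merely weak conformality --- to bound this normal (second-fundamental-form) term using the minimality of the calibrated image, together with a reorganization of the surviving gradient-quadratic terms into the first-order part of a Laplace-type operator (equivalently, passing to a suitable power $\lambda^{\beta}$ for which these terms vanish), so that the residual inhomogeneity $g$ obeys $\lvert g\rvert\le C\lambda$. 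This normal-derivative control, which is automatic in Pan's equidimensional case, is the main obstacle in the present higher-codimension setting and is precisely where the geometry of the multiholomorphic image must enter.
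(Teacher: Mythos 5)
Your setup (normal coordinates, weak conformality $g(du_i,du_j)=\lambda\delta_{ij}$, Hessian symmetry, the traced Bochner formula, and the expression for the tension field from $(n+1)$-harmonicity) matches the paper's framework, and your Koszul-type formula for the tangential part of $\tilde{\nabla}du$ is correct. But the proof is not closed: you explicitly leave open the control of the normal component of $\tilde{\nabla}du$, and the mechanism you propose for it --- minimality of the calibrated image and ``the full multiholomorphic equation'' --- is not what is needed and is not obviously workable (minimality controls the trace of the second fundamental form, not its full pointwise norm). This is a genuine gap, and it sits exactly at the step that carries the whole lemma.

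The missing idea, which is Pan's and which the paper reproduces, is that weak conformality yields \emph{two} independent second-order scalar identities for the same tensor $u^*g=\lambda\, g_X$: expanding $\Delta\big(\trace_{g_X}u^*g\big)=(n+1)\Delta\lambda$ by the Bochner formula gives one expression (the paper's first identity, containing $2\lvert\tilde{\nabla}(du)\rvert^2$ plus curvature and tension terms), while expanding the double divergence $\sum_{j,k}\nabla_j\nabla_k\, g(du_j,du_k)=\Delta\lambda$ gives a second. Subtracting eliminates $\Delta\lambda$ entirely and produces the pointwise algebraic identity
\begin{equation*}
(n-1)\Big[\lvert\tilde{\nabla}(du)\rvert^2+\lvert\trace_{g_X}\tilde{\nabla}(du)\rvert^2+(S_R+S_K)\Big]+2n\sum_{i,j}g\big(du_j,\tilde{\nabla}^2_{ji}du_i\big)=0,
\end{equation*}
in which $\tilde{\nabla}^2_{ji}$ is the antisymmetrized second derivative and hence, by the Ricci identity, a pure curvature contraction of $du$; every term other than $\lvert\tilde{\nabla}(du)\rvert^2$ is therefore $O(\lambda)$ on a compact set. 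For $n\geq 2$ this bounds the \emph{full} $\lvert\tilde{\nabla}(du)\rvert^2$ --- normal components included --- by $C\lambda$, using nothing beyond weak conformality, the symmetry of $\tilde{\nabla}du$, and bounded curvature; no appeal to calibrated geometry is required. Substituting this bound back into the Bochner identity then shows every term of the inhomogeneity is $O(\lambda)$, which is the assertion of the lemma. Without this second (double-divergence) identity your argument cannot terminate, since the traced Bochner formula alone leaves $\lvert\tilde{\nabla}(du)\rvert^2$ with the wrong sign relative to $\Delta\lambda$ and no independent control.
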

\begin{proof}
Let $\nabla$ denote the Levi-Civita connection on $(X,g_X)$ as well as the induced connection on any tensors constructed from $TX$.  Let $\nabla^M$ likewise be the Levi-Civita connection on $(M,g)$.  We equip the bundle $T^*X \otimes u^*TM$ over $X$ with a natural connection
\begin{equation*}
 \tilde{\nabla} := \nabla^* \otimes I + I \otimes u^*\nabla^M = -\nabla \otimes I + I \otimes u^*\nabla^M.
\end{equation*}
Note that $du$ is a section of this bundle.  Let $R = \nabla^2 \in \Omega^2(\End(TX))$ be the Riemannian curvature tensor of $g_X$ and $K$ likewise the curvature of $g$ on $M$.  Then we compute the curvature of $\tilde{\nabla}$:
\begin{equation}
 \mathcal{R} = R \otimes I + I \otimes (u^*\nabla^M)^2 = R \otimes I + I \otimes u^*K. 
\end{equation}
The second exterior derivative, $\tilde{\nabla}^2(du)$, then relates to these curvature terms.  We have for any $Z,W,X$ in $TX$, and $\eta$ in $TM$,
\begin{equation} \label{curvature}
 g(\tilde{\nabla}^2_{Z,W}(du)(X),\eta) = g(du(R(Z,W)X),\eta) + g(K(du(Z),du(W))du(X),\eta).
\end{equation}

Abusing notation we write 
\begin{align*}
 K(X,Y,Z,W) :=& g(K(X,Y)Z,W),\\
 S_R :=& \sum_{jk} u^*g(R(e_j,e_k)e_j,e_k),\\
 S_K :=& \sum_{jk} u^*K(e_j,e_k,e_j,e_k).
\end{align*}
In this notation, Pan's calculation amounts to firstly,  
\begin{equation}\label{firstident}
\Delta(\trace_{g_X}u^*g)= 2S_R + 2S_K+ 2\sum_{jk}\left[ g(\tilde{\nabla}_{kj}(du_j) \otimes du_k)\right ]+ 2\lvert \tilde{\nabla}(du) \rvert^2,
\end{equation}
---in which the norm of $\tilde{\nabla}(du)$ is taken in $\Lambda^2 T^*X \otimes u^*TM$--- and secondly to,
\begin{align}\label{secondident}
 (n+1)\Delta\lambda &= (n+1)\sum_{jk}\delta_{jk} \big[ 2g(\tilde{\nabla}_{kj}(du_j) \otimes du_k)\\
 &+ g (\tilde{\nabla}_j(du_k)\otimes\tilde{\nabla}_j(du_k) + g (\tilde{\nabla}_j(du_j)\otimes\tilde{\nabla}_k(du_k) \big ].
\end{align}
Regarding the relation coming immediately from $(\ref{firstident})$,
\begin{equation}
 \Delta \lambda= \frac{2}{n+1}(S_R + S_K)+ \frac{2}{n+1}\sum_{jk}\left[ g(\tilde{\nabla}_{kj}(du_j) \otimes du_k)\right ]+ \frac{2}{n+1}\lvert \tilde{\nabla}(du) \rvert^2,
\end{equation}
 as
 $\Delta \lambda = g$,
we see that $\lvert g \rvert \leq C \lambda$.  Taking the difference $(\ref{secondident})- (\ref{firstident})$ yields
\begin{equation}\label{Pan12}
 (n-1)\left [\lvert \tilde{\nabla}(du)\rvert^2 + \lvert \trace_{g_X}\tilde{\nabla}(du)\rvert^2 + (S_R + S_K)\right] +(2n)\sum_{ij}g(du_j),\tilde{\nabla}^2_{ji}du_i)= 0.
\end{equation}
We apply Cauchy-Schwarz to the norm of $S_R$, and take account that $R$ is bounded to derive
\begin{equation*}
\lvert S_R := \sum_{jk} g(du(R(e_j,e_k)e_j),du_k) \rvert \leq \sum_{jk} \lvert du(R(e_j,e_k)e_j) \rvert \lvert du_k \rvert \leq C \lvert du \rvert^2 = (n+1)C \lambda.
\end{equation*}
The same can be done for $S_K$.  If $n>1$, then (\ref{Pan12}), another application of Cauchy-Schwarz, and the boundedness of $\tilde{\nabla}^2_{ji}du_i$ yields
\begin{align*}
 \left [\lvert \tilde{\nabla}(du)\rvert^2 + \lvert \trace_{g_X}\tilde{\nabla}(du)\rvert^2 \right] &= -(S_R + S_K) +\frac{2n}{n-1}\sum_{ij}g(du_j,\tilde{\nabla}^2_{ji}du_i)\\
 & \leq C(\lvert S_R \rvert + \lvert S_K \rvert) + C \lvert du \rvert^2 \sum_{ij}g(\tilde{\nabla}^2_{ji}du_i,\tilde{\nabla}^2_{ji}du_i)\\
 & \leq C\lambda.
\end{align*}
Combining the above estimate with the fact that $\tilde{\nabla}^2_{ji}du_i$ is bounded, implies that all three terms of the inhomogeneity $g$ are bounded by $C\lambda$ for some appropriate constant $C$.
\end{proof}

Next we cite the theorem of Aronszajn
\begin{Thm}(\cite{Aronszajn57})
Let $L$ be a second-order, uniformly elliptic, linear differential operator on a bounded, open domain in $\R^m$.  Let $f$ be a $\mathcal{C}^2$ function solving
\begin{equation*}
 Lf = g,
\end{equation*}
with
\begin{equation*}
 \lvert g \rvert \leq C(\lvert f \rvert + \lvert \nabla f \rvert).
\end{equation*}
If $f$ vanishes to infinite order at some point $x_0$, i.e.,
\begin{equation*}
 \lim_{r := \lvert x - x_0\rvert \rightarrow 0} \frac{f(x)}{r^n} = 0, \,\,\, \forall n \in \mathbb{N},
\end{equation*}
then $f \equiv 0$.
\end{Thm}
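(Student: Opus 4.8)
The final statement is Aronszajn's classical strong unique continuation theorem, so the plan is to reproduce the standard route through a weighted Carleman inequality. First I would reduce to a purely local assertion: it suffices to show that infinite-order vanishing at $x_0$ forces $f$ to vanish on an entire neighborhood of $x_0$, since then the set of points at which $f$ vanishes to infinite order is both open and closed, and hence (on a connected domain) all of the domain. After translating so that $x_0 = 0$ and performing a linear change of coordinates that diagonalizes the principal symbol of $L$ at the origin, I may assume $L = \sum_{ij} a_{ij}(x)\partial_i\partial_j + \sum_i b_i(x)\partial_i + c(x)$ with $a_{ij}(0) = \delta_{ij}$ and $a_{ij}$ Lipschitz; the difference between the leading part and the flat Laplacian $\Delta$ is then $O(\lvert x \rvert)$ and can be lumped together with the first-order terms already present in the differential inequality $\lvert g \rvert = \lvert Lf \rvert \le C(\lvert f \rvert + \lvert \nabla f \rvert)$.

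The heart of the argument is a Carleman estimate for $\Delta$ with a radial weight. Concretely, I would prove that there is a constant $C$ and a weight $\phi(x) = \psi(\log\lvert x \rvert)$, with $\psi$ a suitable convex function chosen so that $\phi$ is pseudoconvex with respect to the elliptic symbol, such that
\[ \tau^3 \int e^{2\tau\phi} \lvert x \rvert^{-2} \lvert v \rvert^2 \, dx + \tau \int e^{2\tau\phi} \lvert \nabla v \rvert^2 \, dx \le C \int e^{2\tau\phi} \lvert \Delta v \rvert^2 \, dx \]
holds for every $v \in \mathcal{C}^\infty_c(B \setminus \{0\})$ and every sufficiently large $\tau > 0$. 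This is established by conjugating $\Delta$ by $e^{\tau\phi}$, splitting the resulting operator into its formally self-adjoint and skew-adjoint parts, and integrating by parts; the convexity of $\psi$ is exactly what makes the commutator term positive and produces the gain of powers of $\tau$.

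With the Carleman estimate in hand I would apply it to $v = \chi f$, where $\chi$ is a cutoff equal to $1$ on a small ball $B_\rho$ and supported in $B_{2\rho}$. Expanding $\Delta(\chi f)$ and using $\lvert \Delta f \rvert \le \lvert Lf \rvert + C(\lvert f \rvert + \lvert \nabla f \rvert) \le C'(\lvert f \rvert + \lvert \nabla f \rvert)$, the contributions supported where $\chi \equiv 1$ are dominated by the left-hand side and, for $\tau$ large, absorbed into it; what remains on the right is supported in the annulus where $\nabla\chi \neq 0$, on which $e^{2\tau\phi}$ is bounded uniformly in $\tau$. Because $f$ vanishes to infinite order at $0$, its weighted mass near the origin decays faster than any power as $\tau \to \infty$, so letting $\tau \to \infty$ forces $f \equiv 0$ on $B_\rho$, and the open-closed propagation argument then finishes the proof.

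The main obstacle is the derivation of the Carleman estimate itself — in particular, choosing the weight $\psi$ so that the conjugated operator's commutator is coercive (the pseudoconvexity condition) while simultaneously controlling the variable, merely Lipschitz, leading coefficients $a_{ij}$, whose deviation from $\delta_{ij}$ near $0$ must be small enough to be absorbed rather than to destroy positivity. Keeping the bookkeeping of powers of $\tau$ honest, so that the first-order and coefficient-error terms are genuinely subordinate to the gained $\tau^3$ and $\tau$ factors, is the delicate part; the reduction and the absorption step are then routine.
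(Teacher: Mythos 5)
First, a point of orientation: the paper does not prove this statement at all --- it is quoted as Aronszajn's theorem with \cite{Aronszajn57} as the source, and is then simply applied to the function $\lambda$. So there is no internal proof to compare against; your proposal has to be judged as a reconstruction of the classical argument. As such, its architecture is the right one and essentially the one in the cited literature: a radial Carleman weight $e^{\tau\phi}$ with $\phi$ a convexified function of $\log\lvert x\rvert$, conjugation and splitting into self-adjoint and skew-adjoint parts, application to a cutoff of $f$, absorption of the $\lvert f\rvert + \lvert\nabla f\rvert$ right-hand side for large $\tau$, and the open--closed connectedness argument. Your implicit strengthening of the hypotheses (Lipschitz principal coefficients) is also the honest and essentially sharp condition under which the theorem is true, even though the statement as quoted does not spell it out.

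There is, however, one step that fails as written: you cannot lump $(a_{ij}(x)-\delta_{ij})\partial_i\partial_j f$ in with the first-order terms. The deviation $a_{ij}-\delta_{ij}$ is $O(\lvert x\rvert)$ in \emph{size}, but the term is still second order, and your stated Carleman estimate controls only $\lvert v\rvert$ and $\lvert\nabla v\rvert$ on the left; there is nothing to absorb an $O(\lvert x\rvert)\,\lvert\nabla^2 f\rvert$ error into. The standard repairs are either to prove the Carleman inequality directly for the variable-coefficient principal part (this is where Lipschitz regularity of $a_{ij}$ enters, as in Aronszajn's setting and its refinements), or to supplement your estimate with a second-derivative term of the form $\tau^{-1}\int e^{2\tau\phi}\lvert x\rvert^{2}\lvert\nabla^2 v\rvert^2\,dx$ on the left, obtainable from the stated inequality together with interior elliptic estimates on dyadic annuli, after which the $O(\lvert x\rvert)$ smallness does make the error subordinate for large $\tau$. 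A second, more minor omission: your cutoff is only at the outer radius, but the estimate is asserted for $v\in C^\infty_c(B\setminus\{0\})$, so you also need an inner cutoff $\chi_\epsilon$ excising a ball of radius $\epsilon$ about the origin and must let $\epsilon\to 0$ at fixed $\tau$; this is exactly where the infinite-order vanishing is used (via the pointwise decay of $f$ plus a Caccioppoli inequality on annuli, using the differential inequality, to get the matching $L^2$-averaged decay of $\nabla f$), rather than in the way your sketch phrases it, where the weighted mass comparison between $B_\rho$ and the support of $\nabla\chi$ is what sends $f$ to zero as $\tau\to\infty$. With those two repairs your outline is the correct classical proof.
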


Aronszajn's theorem applies then to $\lambda$.  If $u$ were smooth, then $\lambda$ could not have zeros which accumulate without $\lambda$ vanishing to infinite order, hence $\lambda$'s zeros could not accumulate.  The theorem is proved.

%
%
We have shown that a multiholomorphic map which is smooth can have at most critical points which are isolated.  It remains yet to be discovered whether or not these rigidity properties are sharp in the sense that there are examples of multiholomorphic maps which are not conformal covers of calibrated submanifolds.  

In the case when differentiability is only assumed to be $C^3$, the the unique continuation argument implies that the critical locus cannot be so large that the function vanishes to infinite order.  This condition is not an incredibly strong restriction on the size of the critical locus.  However, by the work of Hardt and Lin \cite{HardtLin87} we know a multiholomorphic map is $C^\infty$ in the compliment of a Hausdorff codimension-$2$ locus.  Hence, we can apply the unique continuation result on such a complement to derive
\begin{Cor}
 Let $u \colon X \rightarrow M$ be a multiholomorphic map of regularity at least $C^3$.  Then, the critical locus of $u$ has Hausdorff codimension at least two.
\end{Cor}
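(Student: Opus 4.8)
The plan is to split the critical locus according to where $u$ is known to be smooth, estimate the two pieces separately, and use that Hausdorff dimension is monotone under inclusion and stable under finite unions. Write $C(u) := \{ x \in X : du(x) = 0 \}$; since $du(x)=0$ precisely when the nonnegative function $\lambda := \frac{\lvert du \rvert^2}{n+1}$ vanishes, the critical locus is exactly the zero set of $\lambda$. Because a multiholomorphic map minimizes the $(n+1)$-energy in its homotopy class (Theorem \ref{EI}), the regularity theory of Hardt and Lin \cite{HardtLin87} applies: there is a closed set $\Sigma \subset X$ of Hausdorff codimension at least two with $u \in C^\infty(X \setminus \Sigma)$. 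I would then decompose
\begin{equation*}
 C(u) = \big( C(u) \cap \Sigma \big) \cup \big( C(u) \setminus \Sigma \big)
\end{equation*}
and treat each term in turn.

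The term $C(u) \cap \Sigma$ is contained in $\Sigma$ and so already has Hausdorff codimension at least two. For the term $C(u) \setminus \Sigma$, I would work on the open set $X \setminus \Sigma$, on which $u$ is smooth, and apply the unique continuation result of Theorem \ref{uniquecontinuation} locally. The preceding lemma shows that $\lambda$ solves a Laplace-type equation $\Delta \lambda = g$ with $\lvert g \rvert \le C\lambda$, so Aronszajn's theorem \cite{Aronszajn57} forbids $\lambda$ from vanishing to infinite order at any point of $X \setminus \Sigma$ unless $u$ is locally constant. As $u$ is assumed non-constant, the zeros of the smooth nonnegative function $\lambda$ then have no accumulation point in $X \setminus \Sigma$, so $C(u) \setminus \Sigma$ is a discrete subset of $X \setminus \Sigma$, of Hausdorff dimension zero. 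Combining the two estimates, $C(u)$ lies in the union of a set of Hausdorff codimension at least two and a set of Hausdorff dimension zero, and is therefore of Hausdorff codimension at least two, as claimed.

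The step I expect to be the main obstacle is the passage, on the smooth region, from the unique continuation property (no infinite-order vanishing) to an actual bound on the dimension of the zero set. In dimension $n+1 \ge 2$, ruling out infinite-order vanishing alone does \emph{not} exclude a codimension-one zero set --- for instance $\lambda = x_0^2$ vanishes only to order two yet is zero along an entire hyperplane --- so one must exploit both the sign constraint $\lambda \ge 0$ and the sharp inequality $\lvert \Delta\lambda \rvert \le C\lambda$, which in fact already fails for that example near its zero hyperplane. One natural route is the strong maximum principle for the operator $\Delta - C$: since $\lambda \ge 0$ attains an interior minimum value zero at any critical point, this forces the zero set of a non-constant $\lambda$ to have no interior accumulation, hence to be discrete. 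Verifying this upgrade carefully, and checking that the accumulation analysis is compatible with the non-smooth locus $\Sigma$ along its boundary, is the technical heart of the argument; the remaining bookkeeping with Hausdorff measure and the cited Hardt--Lin and Aronszajn theorems is routine.
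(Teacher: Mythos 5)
Your decomposition is exactly the paper's: split $C(u)$ along the Hardt--Lin singular set $\Sigma$ of Hausdorff codimension two \cite{HardtLin87}, absorb $C(u)\cap\Sigma$ into $\Sigma$, and control $C(u)\setminus\Sigma$ by the unique continuation analysis of Theorem \ref{uniquecontinuation} applied to $\lambda=\lvert du\rvert^2/(n+1)$ on the smooth complement. Where you differ is in how the discreteness of the zero set of $\lambda$ on $X\setminus\Sigma$ is justified, and your skepticism here is warranted: the paper's own argument asserts that a smooth $\lambda$ whose zeros accumulate must vanish to infinite order at an accumulation point, which is false in dimension $n+1\ge 2$ for exactly the reason you give ($x_0^2$ vanishes to order two along a hyperplane), so Aronszajn's theorem alone does not close this step. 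Your maximum-principle repair does close it, and in fact yields more than discreteness: setting $v=-\lambda\le 0$, the lemma's estimate gives
\begin{equation*}
(\Delta - C)v \;=\; -\Delta\lambda + C\lambda \;\ge\; C\lambda - \lvert \Delta\lambda\rvert \;\ge\; 0,
\end{equation*}
so $v$ is a subsolution of an operator with nonpositive zeroth-order coefficient attaining an interior maximum value $0$ at any critical point; the strong maximum principle (\cite{GilbargTrudinger}, Thm.\ 3.5) then forces $\lambda\equiv 0$ on the entire connected component. Thus on each component of $X\setminus\Sigma$ either $u$ is constant or $du$ is nowhere zero --- stronger than needed, and it renders the accumulation/boundary-compatibility worry in your last paragraph moot, since the dimension bound on $C(u)\setminus\Sigma$ is then immediate. (As in the paper, one must tacitly exclude the constant map, whose critical locus is all of $X$.) The remaining bookkeeping with monotonicity and finite stability of Hausdorff dimension is routine, as you say.
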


We conclude the section with a speculation about the regularity of multiholomorphic endomorphisms.  Given a multiholomorphic endomorphism $u$ of $X^{n+1}$ in $W^{1,n+1} \cap C^{0}$, we have shown that $u$ is a $n+1$-harmonic map (in fact a stable point for the $n+1$-energy).  \emph{A priori} the $p$-Laplace operator is degenerate elliptic.  The ellipticity fails at critical points of $u$.  Under the assumption that $u$ has empty critical locus, we are left with an elliptic equation, and hence might attempt to bootstrap to higher regularity.  

Suppose $u \colon X \rightarrow X$ satisfies the hypotheses of corollary \ref{Cor1}.  Since $u$ has empty critical locus, $f := \lvert du \rvert^{n-1}$ is a positive function on $X$.  And $u$ is a solution to the divergence-form \emph{elliptic} PDE
\begin{equation}\label{ellipticeqn}
  \nabla^*(f \cdot du) = 0.
\end{equation}   
We would want to improve the coefficients by applying the standard $L^{n+1}$-elliptic regularity results for divergence-form equations to improve the regularity.  However, the standard estimates seem slightly too weak to allow this luxury.  At least the problem appears more sophisticated, and particular than we had hoped.  As a result we leave the following speculation to be (dis)confirmed: 

\begin{Spec}
Any multiholomorphic endomorphism $u \colon X^{n+1} \rightarrow X^{n+1}$ of class $W^{1,n+1} \cap C^0$, with $n \geq 2$ is a smooth, conformal cover.  
\end{Spec}

\end{section}

\begin{section}{Motivation}\label{SubSecMainGoals}
From the start, the general aim of this project has been to study the ``mapping theory'', associated to the framework of triadic structures and multiholomorphic maps ---a clear source of inspiration coming from the various types of symplectic invariants (e.g. Lagrangian/Hamiltonian Floer theory, quantum cohomology) which are constructed from holomorphic curves.  The data coming from the moduli of such maps could give information about the geometry of the target manifold or potentially about the singularities of the map images (which generally are calibrated objects).

The starting data includes a compact, $n$-triadic manifold $(M,\omega,g,(J,K))$, as well as any compact, $(n+1)$-dimensional Riemannian manifold $X$ equipped with the associated conformal triad.  $X$ is regarded as the domain and $M$ the target, and the main object of study will be configurations of multiholomorphic maps from $X$ to $M$.
\begin{equation*}
 \{ \text{$(n+1)$-manifolds with conformal triad} \} \xrightarrow{\eth u = 0} \{ \text{ manifolds with $n$-triad} \}
\end{equation*}\\
It should not come as a surprise at this point that we will really only need the data of conformal transformation equivalence class of $X$, so we expect that any relevant invariants will be conformal transformation invariants.  One then considers the moduli spaces of multiholomorphic maps from $X$ into $M$, a space which is split into components (by virtue of the energy identity) by the possible topological classes of such maps.  We might also consider boundary conditions for non-closed $X$ which lie on ``branes,'' that is, maximal isotropic submanifolds for the multiholomorphic form $\omega$.  For example, in the $\Gtwo$-case the branes are the coassociative submanifolds.  There is an obvious choice for the branes for each of the distinct families of n-triad structures.  There is preexisting work along these lines in \cite{LeeLeung08},\cite{LeeLeung09},\cite{Leung02}, (and other work of Leung), which in turn rests on the work of McLean \cite{McLean98} with regard to the deformation theory of calibrated submanifolds and the associated branes.    

The main novelty of this paper's framework is a PDE intertwining compatible structures on domain and target.  Prior work seems to be concerned mainly with arbitrary parametrizations of associative submanifolds, a class of maps which is much larger and does not benefit from the energy results enjoyed by solutions of the MCR equation.  The primary trade off seems to be the evident strong rigidity ---e.g. Thm \ref{uniquecontinuation} and the $\Gtwo$-Liouville theorem in the last section.  One can also compute that the deformation problem for multiholomorphic maps is overdetermined in dimension greater than two.  One might regard a multiholomorphic map as a special parametrization of a calibrated submanifold (or current).  



For most of what follows we will be concerned with a particular realization of the multiholomorphic framework ---the first case in which the setup is totally novel.  In this situation one considers maps from a compact $3$-manifold with conformal triad into a $\Gtwo$-manifold.

\end{section}
\begin{section}{Case: $\Gtwo$-manifolds with the associative triad}\label{SubSecGtwo}
As pointed out in section \ref{SubSecClass}, a $\Gtwo$-manifold possesses the parallel associative triad.  First we describe more fully the subject of $\Gtwo$-manifolds and collect some relevant facts.  An excellent source is \cite{Joyce07}. 
\begin{subsection}{$\Gtwo$-manifolds}

\begin{Defn}
A \altemph{$\Gtwo$-structure} on an oriented $7$-manifold $M$ is a principal, $\Gtwo$-subbundle of the oriented frame bundle of $M$.  
\end{Defn}
One can regard the frame bundle over some point $x \in M$ as consisting of isomorphisms $\phi\colon T_xM \rightarrow \R^7$.  Following \cite{DonaldsonSegal} and \cite{Joyce07}, we describe an equivalent notion of a $\Gtwo$-structure more explicitly.  Consider $V$ a $7$-dimensional real vectorspace with orientation $O$.  There is an open $GL_+(V)$-orbit $P_3 \subset \Lambda^3 V^*$ each element of which has stabilizer $\Gtwo \subset SO(V)$ and likewise for $P_4 \subset \Lambda^4 V^*$.  
Now consider an oriented $7$-manifold $M$.  At each point $p \in M$ we have open subsets $P_{3,p} \subset \Lambda^3T_p^*M, P_{4,p} \subset \Lambda^4T_p^*M$ as before.  Then,
\begin{Defn}
 A \altemph{$\Gtwo$-structure} on an oriented $7$-manifold $M$ is a choice of a $3$-form $\omega$ which lies in $P_{3,p}$ for each $p$.  
\end{Defn}
Note that such a structure defines a $\Gtwo$-structure in the first sense by considering the subbundle of the positive frame bundle consisting of isomorphisms $\alpha\colon T_xM \rightarrow \R^7$ for which $\alpha^*\phi_0 = \phi$.  Conversely, one uniquely defines a $3$-form, Riemannian structure, and Hodge stars since one can define a metric on a $\Gtwo$-manifold by pulling back the Euclidean metric, $\phi_0$ and the Hodge stars, and noting that these are $\Gtwo$-invariant/equivariant.

If $\omega$ is a $3$-form which yields a $\Gtwo$-structure, then one can consider the Riemannian connection $\nabla$ associated to the metric $g$ determined by $\omega$.
\begin{Defn}
 If $\omega$ is a $\Gtwo$-structure on $M$, then the \altemph{torsion} of this $\Gtwo$-structure is $\nabla \omega$.
\end{Defn}
\begin{Prop}(\cite{Joyce07}{11.1.3})\\
Let $\omega$ determine a $\Gtwo$-structure on an oriented $7$-manifold $M$,  and $g$ be the associated metric.  Then the following are equivalent:
 \begin{align*}
 i)& \,\,\, \nabla \omega = 0 \\
 ii)& \,\,\, Hol(g) \subset \Gtwo, \text{  and $\omega$ is the induced $3$-form   } \\
 iii)& \,\,\, d\omega = d^* \omega = 0 \\
 iv)& \,\,\, \text{There exists a parallel spinor field on $M$.} 
\end{align*}
\end{Prop}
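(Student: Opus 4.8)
The plan is to prove the four statements equivalent by designating (i) as a hub, establishing (i)$\iff$(ii), (i)$\iff$(iv), and (i)$\iff$(iii) separately; the last of these will carry the substantive content. The trivial direction throughout is that $\nabla\omega = 0$ forces the vanishing of every invariant built by (anti)symmetrizing $\nabla\omega$: since $d\omega$ is the antisymmetrization of $\nabla\omega$ and $d^*\omega = -\sum_i \iota_{e_i}\nabla_{e_i}\omega$ a contraction of it, (i) immediately yields (iii).

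For (i)$\iff$(ii) I would invoke the holonomy principle, i.e. the fundamental fact that a tensor field on $(M,g)$ is $\nabla$-parallel if and only if it is fixed by parallel transport around every loop, equivalently invariant under the holonomy group acting at each point. Because the classification recalled earlier —in particular Bryant's description of $\Gtwo$ as the stabilizer in $SO(7)$ of the model form $\omega_0$, equivalently of the metric together with the vector cross product— a $3$-form defining a $\Gtwo$-structure is parallel exactly when $\mathrm{Hol}(g)$ sits inside this stabilizer $\Gtwo$. Running the same argument with the induced $3$-form gives the converse, so (i) and (ii) are interchangeable.

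For (i)/(ii)$\iff$(iv) I would use the spinorial counterpart of Bryant's description: $\Gtwo$ is precisely the stabilizer in $\Speven$ of a nonzero (equivalently unit) spinor in the real $8$-dimensional spin representation $\Delta \cong \R^8$. A unit spinor field is parallel exactly when it is holonomy-invariant, so once more the holonomy principle converts ``$\mathrm{Hol}(g)\subset\Gtwo$'' into ``there exists a parallel spinor field'' and back.

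The hard part is (iii)$\Rightarrow$(i): closedness and co-closedness of $\omega$ must force the full covariant derivative to vanish. Here I would pass to the intrinsic torsion, observing that because $\omega$ is $\Gtwo$-invariant, $\nabla\omega$ is determined by a section of $T^*M \otimes \f{g}_2^\perp \cong \R^7 \otimes \R^7$, which decomposes into $\Gtwo$-irreducible summands of dimensions $1, 7, 14, 27$ —the Fern\'andez--Gray torsion classes $W_1, \dots, W_4$. The decisive computation is to expand $d\omega$ and $d^*\omega$ in terms of these components and verify that, between them, they detect every class: $d\omega$ carries the $1$-, $27$-, and $7$-dimensional pieces while $d^*\omega$ carries the $14$- and $7$-dimensional pieces, so the simultaneous vanishing $d\omega = d^*\omega = 0$ annihilates all four torsion classes and hence $\nabla\omega = 0$. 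Carrying out this representation-theoretic bookkeeping —identifying the irreducible pieces of $\Lambda^2$ and $\Lambda^4$ under $\Gtwo$ and matching them to the torsion classes— is the main obstacle and the genuinely $\Gtwo$-specific content of the proposition; everything else reduces to general holonomy theory.
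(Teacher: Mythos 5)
Your outline is correct, but note that the paper does not actually prove this proposition: it is quoted verbatim from Joyce (Prop.\ 11.1.3 of \cite{Joyce07}) as background, so there is no in-paper argument to compare against. Your sketch matches the standard proof found in that reference and in Fern\'andez--Gray: (i)$\Rightarrow$(iii) because $d\omega$ and $d^*\omega$ are algebraic images of $\nabla\omega$; (i)$\Leftrightarrow$(ii) and (ii)$\Leftrightarrow$(iv) by the holonomy principle together with the two descriptions of $\Gtwo$ as $\Stab_{SO(7)}(\omega_0)$ and as $\Stab_{\Speven}(\psi)$ for a unit spinor $\psi$ in the real $8$-dimensional spin representation (for the spinor direction one also needs, as the paper remarks after the proposition, that the $\Gtwo$-structure induces a canonical spin structure, and that $\Gtwo\hookrightarrow\Speven$ covers $\Gtwo\subset SO(7)$ injectively). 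Your bookkeeping for the hard implication (iii)$\Rightarrow$(i) is also the right one: the intrinsic torsion lives in $T^*M\otimes\f{g}_2^{\perp}\cong\R^7\otimes\R^7$, which splits into $\Gtwo$-irreducibles of dimensions $1+7+14+27=49$, and $d\omega\in\Lambda^4_1\oplus\Lambda^4_7\oplus\Lambda^4_{27}$ while $d^*\omega\in\Lambda^2_7\oplus\Lambda^2_{14}$ jointly see all four classes. The only thing you have not actually done is the decisive step you yourself flag: showing (e.g.\ by Schur's lemma plus a nonvanishing check on each irreducible summand) that the equivariant maps sending the torsion to the components of $d\omega$ and $d^*\omega$ are isomorphisms rather than zero on the summands you assign to them. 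Without that verification the argument is a correct roadmap rather than a proof, but since the paper itself delegates the entire statement to the literature, this is an acceptable level of detail here.
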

The last statement would be ambiguous unless $M$ inherits a particular spin structure from the $\Gtwo$-structure ---this is in fact the case.  $\Gtwo$ embeds in $SO_7$, which induces an embedding of their universal covers.  Because $\Gtwo$ is a simply-connected, embedding is a map $\tilde{\iota} \colon \Gtwo \rightarrow Spin_7$, which is an injective Lie group homomorphism lifting the covering of $SO_7$.  The $\Gtwo$-structure $Q$ (understood as a principal subbundle of the positive frame bundle) induces an $SO_7$ structure on $M$ via $P = SO_7 \cdot Q$.  The spin structure on $M$ is then given by $\tilde{P} = Q \times_{\Gtwo} Spin_7$ making use of $\tilde{\iota}$ for the $\Gtwo$-action on $Spin_7$.  


There are also strong constraints on the topology of closed $\Gtwo$-manifolds.  As modules over $\Gtwo$, we have splittings
\begin{align*}
 &i) &&\Lambda^1T^*M = \Lambda_7^1 & ii)&& \Lambda^2T^*M = \Lambda_7^2 \oplus \Lambda^2_{14} \\
 &iii)&&\Lambda^3T^*M = \Lambda_1^3 \oplus \Lambda_7^3 \oplus \Lambda_{27}^3 & iv)&& \Lambda^4T^*M = \Lambda_1^4 \oplus \Lambda_7^4 \oplus \Lambda^4_{27}\\
&v)& &\Lambda^5 T^*M = \Lambda_7^5\oplus \Lambda_{14}^5 & vi)&& \Lambda^6T^*M = \Lambda_7^6, 
\end{align*}
denoting by the lower indices the dimensions of these submodules.  The Hodge star gives an isometry between $\Lambda_l^k$ and $\Lambda_l^{7-k}$.  $\Lambda_1^3 = \langle  \omega \rangle $, $\Lambda_1^4 = \langle  \star \omega\rangle $, and $\Lambda_7^k$ for all k are canonically isomorphic.  It is a well-known fact that if $M$ has a torsion-free $\Gtwo$-structure, these decompositions are respected by the Hodge Laplacian, yielding a refinement of the de Rham cohomology, $\oplus_{l} H_l^k(M,\R) = H^k(M,\R)$.  We have the theorem
\begin{Thm}
 Let $(M,\omega,g)$ be a compact $\Gtwo$-manifold with torsion-free $\Gtwo$-structure.  Then,
\begin{align*}
 &i)& &H_1^3(M,\R) = \langle [\omega]\rangle , \,\,\,\,\, H_1^4(M,\R) = \langle [\star \omega]\rangle &&\\
 &ii)& &H_l^k(M,\R) \cong  H_l^{n-k}(M,\R),&&\\
 &iii)& &\text{and, if $Hol(g)=\Gtwo$, then } H_7^k(M,\R) = 0 \text{ for all k}&&
\end{align*}
\end{Thm}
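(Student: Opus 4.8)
The plan is to leverage the refined Hodge theory recalled just above. Since the $\Gtwo$-structure is torsion-free we have $\nabla\omega = 0$, so $\omega$, $\star\omega$, and the entire system of $\Gtwo$-module splittings of $\Lambda^\bullet T^*M$ are \emph{parallel} subbundles. As noted, the Hodge Laplacian $\Delta$ then preserves each $\Lambda_l^k$, so every harmonic $k$-form decomposes into harmonic pieces lying pointwise in the $\Lambda_l^k$, and $H_l^k(M,\R)$ is identified with the space $\mathcal{H}_l^k$ of harmonic $k$-forms valued in $\Lambda_l^k$. I would then read off all three statements from the pointwise representation theory together with standard Bochner-type arguments.

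For (i): $\Lambda_1^3 = \langle\omega\rangle$ is the trivial real line bundle spanned by the parallel harmonic form $\omega$, so any section is $f\omega$. Writing $\Delta = \nabla^*\nabla + \mathcal{W}$ with $\mathcal{W}$ the (tensorial) Weitzenb\"ock curvature operator, parallelism gives $\nabla(f\omega) = df\otimes\omega$ and $\Delta\omega = \mathcal{W}\omega = 0$, whence $\Delta(f\omega) = (\Delta_0 f)\,\omega$, where $\Delta_0$ is the Laplacian on functions. Harmonicity forces $\Delta_0 f = 0$, so $f$ is constant on the compact $M$ and $\mathcal{H}_1^3 = \R\omega$. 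The identical argument applied to the parallel form $\star\omega$ spanning $\Lambda_1^4$ yields $H_1^4 = \langle[\star\omega]\rangle$. For (ii): the Hodge star commutes with $\Delta$ and, as recorded above, restricts to an isometric bundle isomorphism $\star\colon\Lambda_l^k \to \Lambda_l^{7-k}$; it therefore carries $\mathcal{H}_l^k$ isomorphically onto $\mathcal{H}_l^{7-k}$, giving $H_l^k \cong H_l^{7-k}$.

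For (iii) the hypothesis $Hol(g) = \Gtwo$ (rather than merely $\subset\Gtwo$) is the crux. First I would collapse all the $\Lambda_7^k$ to a single case: the canonical isomorphisms $\Lambda_7^k \cong \Lambda_7^1 = T^*M$ are assembled by wedging and contracting with the parallel forms $\omega$ and $\star\omega$, hence are parallel and commute with the Laplacians, so $\mathcal{H}_7^k \cong \mathcal{H}^1$ for every $k$. It then suffices to prove $H^1(M,\R) = 0$. A torsion-free $\Gtwo$-metric is Ricci-flat, so Bochner's theorem on the compact $M$ shows every harmonic $1$-form is parallel; a parallel $1$-form is a holonomy-invariant covector in $\R^7$, and since $\Gtwo$ acts irreducibly on $\R^7$ with no nonzero fixed vector, there are none. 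Hence $\mathcal{H}^1 = 0$ and $H_7^k = 0$ for all $k$.

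The main obstacle, I expect, is not any single computation but the two structural facts feeding part (iii): verifying that the canonical bundle isomorphisms among the $\Lambda_7^k$ genuinely intertwine the Laplacians (which rests entirely on their being built from parallel tensors), and invoking Ricci-flatness of torsion-free $\Gtwo$-metrics to drive the Bochner argument. This is exactly the step where full holonomy, as opposed to $Hol(g)\subset\Gtwo$, is indispensable: a reducible example could carry parallel $1$-forms and a nonzero $H_7^1$, so the irreducibility of the standard $\Gtwo$-representation is what forces the vanishing.
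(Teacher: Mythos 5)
The paper does not prove this theorem: it is stated as a quoted fact about torsion-free $\Gtwo$-manifolds, with the surrounding discussion pointing to \cite{Joyce07}, so there is no in-text argument to compare yours against. Your proof is the standard one and is correct in all three parts. Part (i) is right: since $\omega$ is parallel and harmonic, the Weitzenb\"ock decomposition gives $\Delta(f\omega)=(\Delta_0 f)\,\omega$, and compactness forces $f$ constant; the same works for $\star\omega$. Part (ii) is immediate from $\star\Delta=\Delta\star$ and the fact (recorded in the paper) that $\star$ carries $\Lambda^k_l$ to $\Lambda^{7-k}_l$. Part (iii) correctly isolates where full holonomy enters: Ricci-flatness of the torsion-free $\Gtwo$-metric plus Bochner makes harmonic $1$-forms parallel, and irreducibility of the $7$-dimensional $\Gtwo$-representation kills them. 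The only step you compress is the assertion that the canonical parallel isomorphisms $\Lambda^k_7\cong\Lambda^1_7$ intertwine the Hodge Laplacians. Parallelism alone gives commutation with $\nabla^*\nabla$; one must also check that the zeroth-order Weitzenb\"ock curvature terms correspond, which follows because a parallel isomorphism is fiberwise equivariant for the holonomy representation, the curvature acts through that representation, and each $\Lambda^k_7$ occurs with multiplicity one in $\Lambda^k$ (this is exactly the content of Joyce's Theorem 3.5.3, the general principle behind the refined Hodge decomposition the paper invokes). With that step made explicit, your argument is complete and is, as far as I can tell, the same route Joyce takes.
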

Ultimately this theorem means that for a full-holonomy $\Gtwo$-manifold there are only two Betti numbers to be determined ($b^2$,$b^3$).  One can find tables of Betti numbers of known examples in \cite{Joyce07}.  

And finally one might consider the characteristic classes relevant to a $\Gtwo$-manifold.  The next theorem is largely an application of Chern-Weil theory combined with the previous considerations.
\begin{Thm}(\cite{Joyce07}{11.2.7})
 Suppose $M$ is a compact $7$-manifold admitting metrics with full holonomy $\Gtwo$.  Then $M$ is orientable, spin, has finite fundamental group, and has a non-trivial first Pontrjagin class.
\end{Thm}
\end{subsection}
\begin{subsection}{Calibrated geometry and $\Gtwo$-manifolds}
$\Gtwo$-geometry is closely related to the topic of calibrated geometry by virtue of the fact that if a manifold $M$ has a closed $\Gtwo$-structure, i.e. $d\omega=0$, then $\omega$ is a calibration on M.  If $M$ is a $\Gtwo$-manifold (has holonomy inside $\Gtwo$) then the theorem above implies not only that $\omega$ is closed, but that $\star \omega$ is as well.  It follows then that $\star \omega$ is a calibration.  The relevant \emph{calibrated submanifolds} are the integral submanifolds for the distributions consisting of the unit-multivectors dual to these forms.  In what follows we describe this situation explicitly.  Most of this section can be found in \cite{HarveyLawson82}.

We have already noted that a $\Gtwo$-manifold is one whose tangent spaces are identified continuously and orientedly with $\Im\Oct$.  Given the complicated subgroup/subalgebra structure of $\Oct$, it is not surprising that there will be interesting sub-geometries.  In particular, we note that the usual presentation of $\Oct$ is by virtue of the Cayley-Dickson process: 
\begin{equation*}
 \Oct = \mathbb{H} \oplus \mathbb{H} = <1,i,j,k> \oplus <l,il=:I,jl=:J,kl=:K>, \,\,\,\, (a,b)(c,d) := (ac -\bar{d}b, da +b\bar{c}).
\end{equation*}
This product restricts nicely on $\Im \Oct$ to $J(x,y) := \Im(x \cdot y)$.  But it is not associative, hence we have the associator:
\begin{equation*}
 [x,y,z] := (x \cdot y) \cdot z - x \cdot (y \cdot z),
\end{equation*}
which is defined on $\Im\Oct$ and takes values in $\Im\Oct$.  On $\Im\Oct$ one can relate the commutator $[,]$ to $J(,)$ via
\begin{equation*}
 J(x,y) = \frac{1}{2}[x,y].
\end{equation*}
However, the lack of associativity of $\Oct$ means that this commutator doesn't satisfy the Jacobi identity.  In fact,
\begin{equation*}
 J(x,J(y,z)) + J(z,J(x,y)) + J(y,J(z,x)) = -\frac{3}{2}[x,y,z].
\end{equation*}
There are an important class of $3$-planes in $\Im\Oct$ which are the $3$-planes isomorphic to $\Im\mathbb{H}$ inside $\Oct$.  These are the \emph{associative} $3$-planes, the paradigm case being $i \wedge j \wedge k$.  Likewise, a $4$-plane which is complementary to an associative $3$-plane is called \emph{coassociative}, with $l \wedge I \wedge J \wedge K$ an exemplar.  Hence according to the above formulas, the associator vanishes on associative $3$-planes, and on a $\Gtwo$-manifold the associator form is proportional to the \emph{Jacobiator} of $J$ (so that $J$ becomes a Lie bracket on associative $3$-planes).  Let
\begin{equation*}
 G := \{\zeta = x\wedge y \wedge z \text{ an oriented $3$-plane in }\Im\Oct \,\, \vert [x,y,z] = 0  \}
\end{equation*}
Likewise, over a $\Gtwo$-manifold $M$,  we could define the \emph{associative Grassmannian} $G(M)$ which is a fiber bundle over $M$ with fiber over $m \in M$:
\begin{equation*}
 G_x(M) = \{ \zeta = x \wedge y \wedge z \text{ an oriented $3$-plane in }T_mM \,\, \vert [x,y,z]_m = 0  \}
\end{equation*}
We define an \emph{associative submanifold} to be a $3$-dimensional submanifold $X$ of $M$,  such that its tangent 3-planes are in the associative Grassmannian.  These calibrated submanifolds are highly constrained (for instance they are volume-minimizing within their homotopy classes), and yet are locally relatively abundant (for example any analytic surface in a $\Gtwo$-manifold locally extends uniquely to an associative submanifold) \cite{HarveyLawson82}.  Existence theory for compact calibrated submanifolds thus is ultimately a question of whether such open submanifolds ``close up'' smoothly.  
\end{subsection}

\begin{subsection}{Existence}
As one can readily see, in general the MCR equations are an overdetermined system.  Indeed, for maps from $X^3$ to $M^7$ the MCRE is equivalent to a system of eight equations on seven unknowns.  Locally the map is subject to the vector-constraint that any two derivative vectors can be crossed to get the third (with cyclic signs) and the additional constraint that any one of the derivative vectors has the same length.  This fact is the primary issue in the $\R^{n} \rightarrow \R^n$ case of quasiregular maps (hence the Liouville theorem), and it becomes relevant in other cases as well.  

It is exceedingly clear, however, that solutions do likely exist in abundance in particular $\Gtwo$-geometries of interest (likewise for $\Speven$).  There is often a rich world of associative submanifolds of a $\Gtwo$-manifold and several researchers have studied these intensely.  To name one example, Lotay's recent work \cite{Lotay2010} begins a more systematic study of the associative submanifolds of $S^7$ (a \emph{nearly parallel} $\Gtwo$-manifold).  And there is literature pertaining to the existence of $\Gtwo$-manifolds with conic singularities as well as corresponding associatives with singularities.  Given an associative submanifold, the inclusion is multiholomorphic.  One can always obtain another by precomposing with a multiholomorphic endomorphism of the domain or post-composing with an isometry of the target.  The question remains as to whether or not all multiholomorphic maps into a $\Gtwo$-manifold are of this form.  

More speculatively, there may be some special geometric conditions on a $\Gtwo$-manifold which imply that $p$-harmonic maps are either multi-holomorphic or anti-multiholomorphic --analogously to some of the cases in which all harmonic maps from Riemann surfaces to a K\"ahler target are (anti-)holomorphic.  In this setting the $p$-harmonic flow could potentially deform any continuous map homotopically to a $p$-harmonic representative, a la Wei's theorem, \ref{Existence}.  Relaxing the curvature conditions in his theorem would require the analysis of bubbling in limits of the $p$-harmonic flow. 

In the last section we present a rigidity theorem ---a Liouville-type theorem--- for multiholomorphic maps in $\Gtwo$-manifolds.

\end{subsection}
\begin{subsection}{The $\Gtwo$-Liouville Theorem}
In this section we prove a Liouville-type theorem for multiholomorphic maps in the $\Gtwo$-scenario from $X^3$ to $M^7$.  We use the term ``classical'' to refer to the regime in which the regularity of these maps are assumed to be at least $C^3$.  

\begin{Thm}{(The ``Classical,'' $\Gtwo$-Liouville Theorem)}\\
Suppose that $u\colon (X^3,h) \rightarrow (M^7,g)$ is a $C^3$-multiholomorphic map.  On the regular locus of $u$, denote by $\tilde{S}$ the scalar curvature of $u^*g$, by $S$ the scalar curvature of $h$, and by $\lambda_1$ the first eigenvalue of the $h$-Laplacian.  Suppose we have the uniform bounds
\begin{equation*}
 S > -8\lambda_1, \,\,\,\, \tilde{S} \leq 0.
\end{equation*}
Then if $u$ has a critical point, it is a constant map. 
\end{Thm}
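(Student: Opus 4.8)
The plan is to convert the curvature hypotheses into a scalar Yamabe-type equation by exploiting the weak conformality established in Section~\ref{SubSecMCRE}. Since $\dim X = 3$ we are in the case $n=2$, where $u^*g = \tfrac13\lvert du\rvert^2 h$. On the regular locus $\Omega = X\setminus\{\,du=0\,\}$ put $\rho := \tfrac13\lvert du\rvert^2>0$, so that $u^*g=\rho\,h$ is a genuine metric conformal to $h$, and set $\phi := \rho^{1/4}>0$. Writing $\Delta_h$ for the negative Laplace--Beltrami operator (so $-\Delta_h=\nabla^*\nabla$), the conformal change law for scalar curvature in dimension three reads $-8\Delta_h\phi + S\phi = \tilde S\,\phi^{5}$, where the coefficient $8=\tfrac{4(m-1)}{m-2}$ at $m=3$ is exactly the constant appearing in the hypothesis $S>-8\lambda_1$. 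In other words $\phi$ solves $L_h\phi=\tilde S\phi^5$ for the conformal Laplacian $L_h:=-8\Delta_h+S$. This is the same curvature data carried by the Bochner identity of Section~\ref{SubSecCrit}: under weak conformality the term $S_R$ there equals $\rho\,S$, so the domain scalar curvature enters linearly, and $S_K$ is tied to the intrinsic curvature $\tilde S$ of the calibrated (hence minimal) image $u(X)$ through the Gauss equation.

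Next I would feed in the two sign conditions. Because $\tilde S\le 0$ and $\phi\ge 0$ we have $L_h\phi\le 0$ on $\Omega$, i.e. $\phi$ is a nonnegative subsolution of the conformal Laplacian. Pairing with $\phi$ and integrating over $X$ gives
\begin{equation*}
 \int_X\bigl(8\lvert\nabla\phi\rvert^2 + S\,\phi^2\bigr)\,\dVol_h = \int_X \tilde S\,\phi^{6}\,\dVol_h \le 0 ;
\end{equation*}
the potentially unbounded factor $\tilde S$ near the critical set is harmless here since it is weighted by $\phi^{6}\to 0$, and because $u\in C^3$ the zero set contributes no boundary term ($\phi$ and $\nabla\phi$ vanish there). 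Invoking $S>-8\lambda_1$ pointwise then yields $\int_X\lvert\nabla\phi\rvert^2 < \lambda_1\int_X\phi^2$ whenever $\phi\not\equiv 0$. The endgame is to contradict this using the existence of a critical point, i.e. a genuine interior zero of the nonnegative function $\phi$: I expect the contradiction to come from a ground-state comparison in the spirit of Fischer--Colbrie--Schoen, reading $S+8\lambda_1>0$ as positivity of a shifted conformal Laplacian and comparing its principal eigenvalue on $\Omega$ against that on the closed manifold $X$, the positive solution $\phi$ on $\Omega$ being incompatible with the spectral bound unless $\phi\equiv 0$.

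The hard part is precisely the behaviour along the critical locus, and this is where the quantitative bound $S>-8\lambda_1$ (rather than a mere sign) is indispensable. At a critical point the conformal metric $u^*g=\phi^{4}h$ collapses and $\tilde S$ may blow up, so the effective zeroth-order potential $\tilde S\phi^{4}$ of the linearised equation is uncontrolled; consequently the naive finish---a Hopf boundary-point argument at the zero of $\phi$---breaks down, and one is forced into a global spectral estimate. Making that estimate rigorous requires (i) the regularity and smallness of $\{\,du=0\,\}$ supplied by Theorem~\ref{uniquecontinuation} and the Hardt--Lin codimension bound, so that $\phi\in W^{1,2}(X)$ and all integrations by parts across the zero set are justified, and (ii) a careful bookkeeping of the strict inequality $S>-8\lambda_1$ against the Rayleigh quotient of $\phi$, arranged so that the interior zero forces the principal eigenvalue strictly below $\lambda_1$. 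Granting this, $\phi\equiv0$, i.e. $du\equiv0$, so the presence of a single critical point forces $u$ to be constant.
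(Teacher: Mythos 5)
Your reduction is, up to notation, exactly the paper's: weak conformality gives $u^*g=\tfrac13\lvert du\rvert^2h$, and with $\Psi=(\tfrac13\lvert du\rvert^2)^{1/4}$ the dimension-three conformal change of scalar curvature yields $-8\Delta\Psi+S\Psi=\tilde S\,\Psi^5$ on the positivity locus, so that $\tilde S\le 0$ makes $\Psi$ a nonnegative subsolution of $\Delta-\tfrac18S$. The genuine gap is your endgame, which is left as an expectation rather than an argument. The inequality you extract, $\int_X\lvert\nabla\phi\rvert^2<\lambda_1\int_X\phi^2$, is not a contradiction: the Rayleigh-quotient characterization of the first nonzero eigenvalue $\lambda_1$ holds only for mean-zero test functions, and your $\phi$ is nonnegative (constants satisfy your inequality strictly). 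Nothing in the integral identity has yet used the hypothesis that $u$ has a critical point, and the Fischer--Colbrie--Schoen-style ground-state comparison you gesture at is precisely the missing step; it is not routine, because the Dirichlet principal eigenvalue of a proper subdomain of $X$ need \emph{not} dominate the closed eigenvalue $\lambda_1$ (remove a small ball: the Dirichlet eigenvalue tends to $0$), so ``comparing the principal eigenvalue on $\Omega$ against that on $X$'' goes the wrong way. The integration by parts across the critical set is also shakier than you claim: near a zero of $du$ one has $\phi\sim r^{1/2}$, so $\nabla\phi$ does not vanish there and may blow up, even though it remains in $L^2$.

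The paper's finish is local and much shorter, and it sidesteps the difficulty you flag as ``the hard part.'' Once $\tilde S\,\Psi^5$ is discarded by sign, the surviving inequality $\Delta\Psi-\tfrac18S\Psi\ge0$ has bounded coefficients --- $S$ is the scalar curvature of the fixed compact metric $h$ --- so the ``uncontrolled potential $\tilde S\phi^4$'' never enters and a maximum-principle argument is available after all. The paper applies the weak maximum principle (where $S>-8\lambda_1$ is invoked) on a connected component $\Omega^+$ of $\{\Psi>0\}$: if $\partial\Omega^+\ne\emptyset$ then $\sup_{\Omega^+}\Psi\le\sup_{\partial\Omega^+}\Psi^+=0$, contradicting $\Psi>0$ inside; hence either $\Omega^+$ is all of $X$ (no critical point) or $\Psi\equiv0$ and $u$ is constant. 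If you wish to keep your global variational route, you must actually prove the eigenvalue comparison on $\{\Psi>0\}$ versus $X$; as written the proposal stops exactly where the conclusion has to be extracted.
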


\begin{proof}
We start by defining $F(x,u) := \frac{\sqrt{3}}{\lvert du \rvert}u^*\omega([X])$, the function on $X$ which at each point evaluates $u^*\omega$ on a unit volume element in $TX$, scaled by the norm of $du$.  Notice that the MCR equation on $u$ implies,
\begin{equation*}
 u^*g(\bullet,\bullet) = F(x,u)h(\bullet,\bullet).
\end{equation*}
And using the MCR equation, if we apply both sides of this equation to a unit vector in $TX$, we get 
\begin{equation*}
 \frac{1}{\sqrt{3}}\lvert du \rvert^3 = 3 u^*\omega([X]).
\end{equation*}
Hence,
\begin{equation*}
 F = \frac{1}{3}\lvert du \rvert^2.
\end{equation*}

Note that $u$ is orientation-non-reversing which makes it clear that $F$ is in $C^2$.  On the open subset of $\Omega$ on which $F$ is strictly positive, $u^*g$ is conformally equivalent to $h$ by virtue of the conformal factor
\begin{equation*}
 u^*g = e^{2\lambda}h, \,\,\,\,\, \lambda = \frac{1}{2} \log F.
\end{equation*}
Conformal equivalence of metrics imposes constraints on their curvature tensors; specifically we make use of the relation on the Ricci tensors, $\tilde{R}$,and $R$ respectively.  (cf. \cite{Besse}, 1.159)  Let $\nabla$ be the $h$-covariant derivative, and $\Delta$ the $h$-Laplacian $tr_h(\nabla d)$.  Then,
\begin{equation*}
  \tilde{R} = R -(\nabla d\lambda - d\lambda \otimes d\lambda)+ (-\Delta \lambda - \lvert d\lambda \rvert^2)h.
 \end{equation*}
The corresponding scalar curvatures will satisfy
\begin{equation*}
 \tilde{S} = e^{-2\lambda}(S-4\Delta \lambda -2\lvert d\lambda \rvert^2).
\end{equation*}
Repackaging with $\Psi := F^{\frac{1}{4}}$, we can calculate (cf. \cite{Besse}, 1.160),
\begin{equation*}
 \tilde{S}\Psi^5 = -8 \Delta \Psi + S\Psi.
\end{equation*}
This equation applies on each connected component $\Omega^+$ of the positive locus of $\Psi$ inside $\Omega$. 

By hypothesis we have imposed the inequality
\begin{equation*}
  \Delta \Psi -\frac{1}{8}S\Psi = -\frac{1}{8}\tilde{S} \Psi^5 \geq 0.
\end{equation*}
The condition $S > -8 \lambda_1$ implies a solution of 
\begin{equation*}
 \Delta \Psi - \frac{1}{8}S \Psi \geq 0
\end{equation*}
satisfies the classical weak maximum principle in $\Omega^+$ (\cite{GilbargTrudinger}, Cor. 3.2).  Hence, $\Psi$ cannot have a zero on the boundary of $\Omega^+$ unless it is constant.  Thus, $\Omega^+$ is all of $\Omega$ or $\Psi$ is constant.
\end{proof}

In order to deal with less strict regularity/integrability assumptions one must delve more deeply in the theory of functions satisfying the semilinear PDE:
\begin{equation}
 \Delta \Psi = \frac{1}{8}S(x) \Psi- \frac{1}{8}\tilde{S}(x)\Psi^5.
\end{equation}
We note also that the $C^3$-assumption in the previous theorem is important for this argument but not necessarily sharp.
  
\end{subsection}


\end{section}
\vspace{1cm}

\emph{
The author extends his warm gratitude to Jonathan Block who was a perpetually helpful sounding board, and to Denis Auroux, Joachim Krieger, and Clifford Taubes, Katrin Wehrheim, Spiro Karigiannis, and Benoit Charbonneau who were willing to listen to this spiel at least once and lend useful comments.
}
\def\cprime{$'$}

\noindent
University of Waterloo\\ 
Pure Mathematics\\
200 University Avenue West\\
Waterloo, Ontario, N2L 3G1\\ 
Canada\\
aaron.smith@uwaterloo.ca\\
aasmith@alumni.upenn.edu\\

\end{document}